\documentclass[11pt,letterpaper]{amsart} 
\usepackage[margin=0.9in]{geometry}
\usepackage[rgb]{xcolor}
\usepackage{tikz}
\usepackage[bottom]{footmisc}
\usepackage{nicematrix}
\usepackage{verbatim}
\usepackage{mathtools}
\usepackage{amsmath}
\usepackage{amsxtra}
\usepackage{amscd}
\usepackage{amsthm}
\usepackage{amsfonts}
\usepackage{amssymb}
\usepackage{eucal}
\usepackage{float}
\usepackage{mathrsfs}
\usepackage{graphicx}
\usepackage{stmaryrd}
\usepackage{tikz-cd}
\usepackage{mathdots}
\usetikzlibrary{arrows}
\usetikzlibrary{cd}
\usepackage{ytableau}
\usepackage{tikz-cd}
\usetikzlibrary{arrows}
\usetikzlibrary{cd}
\usepackage{caption}
\tikzcdset{every label/.append style = {font = \small}}
\allowdisplaybreaks

\usepackage{latexsym,todonotes}

\PassOptionsToPackage{hyphens}{url}\usepackage[linktocpage=true]{hyperref}
\hypersetup{colorlinks,linkcolor=blue,urlcolor=orange,citecolor=blue}
\usepackage[all, cmtip]{xy}

\usepackage{stmaryrd}
\usepackage{color} 

\usepackage{amsrefs}

\newtheorem{thm}{Theorem} [section]
\newtheorem{cor}[thm]{Corollary}
\newtheorem{lem}[thm]{Lemma}

\theoremstyle{definition}
\newtheorem{definition}[thm]{Definition}

\theoremstyle{remark}

\numberwithin{equation}{section}

\begin{document}

\newcommand{\thmref}[1]{Theorem~\ref{#1}}
\newcommand{\secref}[1]{Section~\ref{#1}}
\newcommand{\lemref}[1]{Lemma~\ref{#1}}
\newcommand{\propref}[1]{Proposition~\ref{#1}}
\newcommand{\corref}[1]{Corollary~\ref{#1}}
\newcommand{\remref}[1]{Remark~\ref{#1}}
\newcommand{\eqnref}[1]{(\ref{#1})}
\newcommand{\exref}[1]{Example~\ref{#1}}

 \newcommand{\calA}{\mathcal{A}}
  \newcommand{\calD}{\mathcal{D}}
 \newcommand{\fraksl}{\mathrm{\mathfrak{sl}}}
 \def\calUP{{\mathcal {UP}}}
 \newcommand{\GSp}{\mathrm{GSp}}
 \newcommand{\bbP}{\mathbb{P}}
 \newcommand{\PGSp}{\mathrm{PGSp}}
\newcommand{\PGSO}{\mathrm{PGSO}}
\newcommand{\PGO}{\mathrm{PGO}}
\newcommand{\SO}{\mathrm{SO}}
\newcommand{\GO}{\mathrm{GO}}
\newcommand{\GSO}{\mathrm{GSO}}
\newcommand{\Spin}{\mathrm{Spin}}
\newcommand{\GSpin}{\mathrm{GSpin}}
\newcommand{\Sp}{\mathrm{Sp}}
\newcommand{\PGL}{\mathrm{PGL}}
\newcommand{\GL}{\mathrm{GL}}
\newcommand{\SL}{\mathrm{SL}}
\newcommand{\U}{\mathrm{U}}
\newcommand{\ind}{\mathrm{ind}}
\newcommand{\Ind}{\mathrm{Ind}}
\newcommand{\im}{\mathrm{im}}
\renewcommand{\ker}{\mathrm{ker}}
 \newcommand{\triv}{\mathrm{triv}}
  \newcommand{\std}{\mathrm{std}}
 \newcommand{\Ad}{\mathrm{Ad}}
  \newcommand{\ad}{\mathrm{ad}}
  \newcommand{\Tr}{\mathrm{Tr}}
  \renewcommand{\S}{\mathscr{S}}
  \newcommand{\Y}{\mathbb{Y}}
\newcommand{\End}{\mathrm{End}}
\newcommand{\Ext}{\mathrm{Ext}}
\newcommand{\Mat}{\mathrm{Mat}}
\newcommand{\vol}{\mathrm{vol}}
\newcommand{\bigzero}{\mbox{\normalfont\Large\bfseries 0}}

\newtheorem{innercustomthm}{{\bf Theorem}}
\newenvironment{customthm}[1]
  {\renewcommand\theinnercustomthm{#1}\innercustomthm}
  {\endinnercustomthm}
  
  \newtheorem{innercustomcor}{{\bf Corollary}}
\newenvironment{customcor}[1]
  {\renewcommand\theinnercustomcor{#1}\innercustomcor}
  {\endinnercustomthm}
  
  \newtheorem{innercustomprop}{{\bf Proposition}}
\newenvironment{customprop}[1]
  {\renewcommand\theinnercustomprop{#1}\innercustomprop}
  {\endinnercustomthm}

\newcommand{\bbinom}[2]{\begin{bmatrix}#1 \\ #2\end{bmatrix}}
\newcommand{\cbinom}[2]{\set{\^!\^!\^!\begin{array}{c} #1 \\ #2\end{array}\^!\^!\^!}}
\newcommand{\abinom}[2]{\ang{\^!\^!\^!\begin{array}{c} #1 \\ #2\end{array}\^!\^!\^!}}
\newcommand{\qfact}[1]{[#1]^^!}

\newcommand{\nc}{\newcommand}
\def\C{{\mathbb C}}
\def\R{{\mathbb R}}
\def\Z{{\mathbb Z}}
\def\Q{{\mathbb Q}}
\def\A{{\mathbb A}}
\def\G{{\mathbb G}}
\def\g{{\mathfrak g}}
\def\m{{\mathfrak m}}
\def\k{{\mathfrak k}}
\def\calP{{\mathcal P}}
\def\calI{{\mathcal I}}
\def\calA{{\mathcal A}}
\def\calB{{\mathcal B}}
\def\calF{{\mathcal F}}
\def\calM{{\mathcal M}}
\def\calN{{\mathcal N}}
\def\calT{{\mathcal T}}
\def\calH{{\mathcal H}}
\def\calR{{\mathcal R}}
\def\calFJ{{\mathcal {FJ}}}
\def\calS{{\mathcal S}}
\def\calW{{\mathcal W}}
\def\p{{\mathfrak p}}
\def\sl{{\mathfrak sl}}
\def\su{{\mathfrak su}}
\def\h{{\mathfrak h}}
\def\O{{\mathbb O}}
\def\OO{{\mathcal O}}
\def\tm{{\times}}
\def\sm{{\setminus}}
\def\oomm{{\overline{\omega}}}
\def\D{{\delta}}
\def\Om{{\Omega}}
\newcommand{\e}{{\epsilon}}

\newcommand{\calZ}{\mathcal{Z}}
\newcommand{\omm}{{\omega}}
\newcommand{\RR}{\right}
\newcommand{\LL}{\left}
\newcommand{\floor}[1]{\lfloor #1 \rfloor}
\newcommand{\pair}[1]{\langle {#1} \rangle}

\newcommand{\frakg}{\mathfrak{g}}
\newcommand{\frakgl}{\mathfrak{gl}}
\newcommand{\frakso}{\mathfrak{so}}
\newcommand{\fraksp}{\mathfrak{sp}}
\newcommand{\frako}{\mathfrak{o}}
\newcommand{\fraka}{\mathfrak{a}}
\newcommand{\fraku}{\mathfrak{u}}
\newcommand{\frakp}{\mathfrak{p}}
\newcommand{\fraksu}{\mathfrak{su}}
\newcommand{\frakh}{\mathfrak{h}}
\newcommand{\V}{{\Vert}}
\newcommand{\ord}{\rm {ord}}
\newcommand{\Stab}{Stab}
\newcommand{\Sym}{Sym}
\newcommand{\tr}{\mathrm{tr}}
\newcommand{\Lie}{\mathrm{Lie}}
\newcommand{\rank}{\mathrm{rank}}

\newcommand{\innerproduct}[2]{\langle #1, #2 \rangle}
\newcommand{\HSpin}{\mathrm{HSpin}}
\newcommand{\pr}{\mathrm{pr}}
\newcommand{\GU}{GU}
\newcommand{\SU}{SU}
\newcommand{\Orth}{O}
\newcommand{\glue}{glue}
\newcommand{\diag}{\operatorname{diag}}
\newcommand{\charf}{char}

\newcommand{\ff}{B}

\nc{\etab}{\eta^{\bullet}}
\newcommand{\Iblack}{\I_{\bullet}}
\newcommand{\wb}{w_\bullet}
\newcommand{\UIblack}{\U_{\Iblack}}

\newcommand{\blue}[1]{{\color{blue}#1}}
\newcommand{\red}[1]{{\color{red}#1}}
\newcommand{\green}[1]{{\color{green}#1}}
\newcommand{\white}[1]{{\color{white}#1}}

\newcommand{\dvd}[1]{t_{\odd}^{{(#1)}}}
\newcommand{\dvp}[1]{t_{\ev}^{{(#1)}}}
\newcommand{\ev}{\mathrm{ev}}
\newcommand{\odd}{\mathrm{odd}}

\newcommand\TikCircle[1][2.5]{{\mathop{\tikz[baseline=-#1]{\draw[thick](0,0)circle[radius=#1mm];}}}}

\newcommand{\commentcustom}[1]{}

\raggedbottom

\title[Period integrals]
{Period integrals of distinguished polarised strongly tempered hyperspherical varieties}

\author{Colin Jia Sheng Loh}
 \address{Department of Mathematics, National University of Singapore, 10 Lower Kent Ridge Road, Singapore 119076}
\email{colinloh@nus.edu}

\begin{abstract}
Recent work of Mao, Wan and Zhang \cite{MWZ} has provided a complete list of strongly tempered hyperspherical varieties and they proposed some new period integrals. In this paper, I will present new period integrals of distinguished polarised strongly tempered  hyperspherical varieties and discuss the L-functions these integrals represent, as examples of the Relative Langlands Duality.
\end{abstract}

%\vspace{.3cm}
\maketitle

\setcounter{tocdepth}{1}
\tableofcontents

\section{Introduction and Main Results} 
\label{Introduction and Main Results}
\subsection{Relative Langlands Duality and Automorphic $L$-functions}
    The study of automorphic $L$-functions is a central aspect of the Langlands program. Historically, one of the most powerful tools for establishing the analytic properties of these L-functions has been the \textit{Rankin-Selberg method} \cite{Bump1}, which relies on representing L-functions via global period integrals. However, constructing the appropriate period integrals that successfully unfold to yield specific L-functions has traditionally relied heavily on case-by-case ingenuity.

    Recently, the \emph{relative Langlands program} introduced by Ben-Zvi, Sakellaridis, and Venkatesh \cite{BZSV} provides a systematic conceptual foundation \cite{MWZ} for understanding these integral representations. This arises from the study of certain $G$-Hamiltonian spaces known as \textit{hyperspherical varieties}. Within this theory, the authors proposes a deep involutive duality between certain $G$-hyperspherical varieties $M$, associated with a BZSV quadruple $\mathcal{D}$, and corresponding $G^{\vee}$-hyperspherical varieties $M^{\vee}$, where $G^{\vee}$ is the Langlands dual group. Informally, this duality can be described as follows.

    Given a $G$-hyperspherical variety $M$, one can attach two fundamental invariants: a period invariant $\mathcal{P}_{\mathcal{D}}$ and a spectral invariant $\mathcal{S}_{\mathcal{D}}$. The relative Langlands duality conjectures that these invariants satisfy \cite[\textsection 1.6]{Gan} 
    \begin{align*}
        &\text{Period invariant of $M$ ($\mathcal{P}_{\mathcal{D}}$) = Spectral invariant of $M^{\vee}$ ($\mathcal{S}_{\mathcal{D}^{\vee}}$)},\\
        &\text{Spectral invariant of $M$ ($\mathcal{S}_{\mathcal{D}}$) = Period invariant of $M^{\vee}$ ($\mathcal{P}_{\mathcal{D}^{\vee}}$)}.
    \end{align*}
    \subsubsection{Period Invariant}
    \label{sec: Period invariant}
    These period integrals $\calP_\calD$ associated to a BZSV quadruple $\calD= (G,H, \rho_H,\iota)$ (see Definition \ref{defn: hyperspherical datum}) can be constructed as follows. The map $\iota$ induces an adjoint action of $H \tm \SL_2$ on the Lie algebra $\frakg$ on $G$ such that $\frakg$ admits the following decomposition
    \begin{equation*}
        \frakg = \bigoplus_{k \in I} \rho_k \otimes \operatorname{Sym}^k
    \end{equation*}
    where $\rho_k$ are representations of $H$ and $I$ is a subset of $\Z_{\geq0}$. Denoting $I_{\text{odd}}$ to be the subset of $I$ consisting of all odd numbers and setting
    \begin{equation*}
        \rho_{H,\iota} = \rho_H \oplus \LL( \bigoplus_{k \in I_{\text{odd}}} \rho_k \RR),
    \end{equation*}
    we have $\rho_{H,\iota}: H \to \Sp(V)$ being a symplectic anomaly-free (we refer to  \cite{BZSV} for its definition) representation of $H$. Let $k$ denote a global field and $\A = \A_k$ be its ring of adeles. Let $Y$ be a maximal isotropic subspace of $V$ and $\Om_\psi$ is the Weil representation of $\widetilde{\Sp}(V)$ defined on the Schwartz space $\calS(Y(\A))$. As an anomaly-free representation, we have $\widetilde{\Sp}(V)$ splitting over $\rho_{H,\iota}(H)$ and $\Om_\psi$ restricting to a representation of $H(\A)$ defined over $\calS(Y(\A))$. With these, we define the associated series
    \begin{align*}
        \Theta_\psi^\Phi(h) = \sum_{X \in Y(k)} \Om_\psi(h) \Phi(X), && h \in H(\A), \Phi \in \calS(Y(\A)),
    \end{align*}
    and the period integral
    \begin{equation*}
        \calP_{H,\iota, \rho_H}(\varphi, \Phi) = \int_{H(k) \sm H(\A)} \calP_\iota(\varphi)  \Theta_\psi^\Phi(h)\, dh
    \end{equation*} 
    where $\varphi$ is an automorphic form on $G(\A)$ and $\calP_\iota(\varphi)$ is the degenerate Whittaker coefficient of $\varphi$ associated to $\iota$ (see \cite[\textsection 1.2]{MWZ2} for its definition). For brevity we will denote this period integral as $\calP_\calD(\varphi;\Phi)$. Suppose $\calD^\vee = (\hat G, \hat H', \rho_{\hat H'}, \hat \iota')$ is the BZSV quadruple dual to that of $\calD$ under the proposed relative Langlands duality, one can also define a corresponding period integral $\calP_{\calD^\vee}(\varphi;\Phi)$.
    
    \subsection{Main Result}  
    \label{sec: Main result}
    The main result of this paper is Theorem \ref{thm: main thm} regarding the uniform construction of new\textsuperscript{1} Eulerian period integrals for the remaining\textsuperscript{2} strongly tempered distinguished polarised hyperspherical varieties given in Appendix \ref{sec: Appendix A Coisotropic symplectic representations}. These period integrals represent the $L$-function for multiplicity-free representation $\tau$ of the complex dual group $G^\vee$ given in Table \ref{Table: Multiplicity-free repn} below.
    \begin{thm}
        \label{thm: main thm} Let $\calD = (G,H,\rho_H,\iota)$ is a strongly tempered BZSV quadruple dual to a distinguished polarised BZSV quadruple $\calD^\vee =(G^\vee, G^\vee, \tau \oplus \tau^\vee, 1)$ where $(G^\vee, \tau)$ is a multiplicity free representation in Table \ref{Table: Multiplicity-free repn}.
        \begin{enumerate}
            \item [(a)] (Theorem \ref{thm: Unfolding of period integral and absolute conv}). The period integral $\calP_\calD(\varphi,s;\Phi)$ converges absolutely away from the poles of the Eisenstein series and in some half-plane of $\C$ or $\C^2$, the period integral $\calP_\calD(\varphi,s;\Phi)$ is Eulerian and unfolds to the Whittaker model.
            \item [(b)] (Corollary \ref{cor: Euler factorisation} and Theorem \ref{thm: unramified comp}). Moreover, the period integral $\calP_\calD(\varphi,s;\Phi)$ is an integral representation for the $(G^\vee,\tau)$ $L$-function, i.e.
            \begin{align*}
                \calP_\calD(\varphi,s;\Phi) = L^S(s, \pi, \tau) \prod_{\nu \in S} \calZ(W_\nu,s;\Phi_\nu),
            \end{align*}
            where $\pi$ is a irreducible cuspidal and globally generic representation of $G(\A)$.
        \end{enumerate}
    \end{thm}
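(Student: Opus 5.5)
The plan is the classical Rankin--Selberg template, run uniformly over the cases of Table \ref{Table: Multiplicity-free repn}. For each quadruple $\calD=(G,H,\rho_H,\iota)$ I first make $\calP_\calD$ concrete. The $\SL_2$-type $\iota$ fixes a unipotent subgroup $U$ and a generic character of it, so $\calP_\iota(\varphi)$ becomes a Fourier--Jacobi type coefficient. The anomaly-free representation $\rho_{H,\iota}$ is then split into a Lagrangian $Y$ compatible with a parabolic of $H$. Where the dual side requires a complex variable (when $H$ is not reductive-equivalent to a subgroup on which $\varphi$ alone gives an Eulerian integral), I replace part of the theta series by an Eisenstein series $E(h,s;f_s)$ on a factor of $H$ induced from a maximal parabolic $P_H=M_HN_H$. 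This gives the one- or two-variable integral $\calP_\calD(\varphi,s;\Phi)$. Absolute convergence away from the poles of $E$ then follows from two facts: cusp forms are rapidly decreasing, and $E(h,s)\Theta^\Phi_\psi(h)$ has moderate growth on Siegel sets.

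For part (a), I unfold in $\mathrm{Re}(s)\gg0$, in four steps.
\begin{enumerate}
\item[(i)] Write $E=\sum_{P_H(k)\backslash H(k)}f_s$ and collapse the sum against $H(k)\backslash H(\A)$.
\item[(ii)] Expand $\Theta_\psi^\Phi$ as $\sum_{X\in Y(k)}$ and decompose $Y(k)$ into $M_H(k)$-orbits. All orbits but the open one contribute zero, because the inner integral contains a constant term of $\varphi$ along a proper unipotent radical, and this vanishes by cuspidality.
\item[(iii)] On the open orbit, whose stabilizer is $R$, Fourier-expand $\varphi$ along the remaining unipotent directions of $G$. Root-exchange arguments in the style of Ginzburg--Rallis--Soudry move $\varphi$ step by step to its full Whittaker coefficient $W_\varphi$.
\item[(iv)] Conclude that the integral becomes $\int_{R(\A)\backslash H(\A)}W_\varphi(\eta h)\,\Om_\psi(h)\Phi(X_0)f_s(h)\,dh$.
\end{enumerate}
Absolute convergence of this unfolded expression in a half-plane is proved by the usual gauge estimates on $W_\varphi$ restricted to the torus, together with Iwasawa decomposition. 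This justifies interchanging sum and integral. Eulerianity, Corollary \ref{cor: Euler factorisation}, then follows because $W_\varphi=\prod_\nu W_\nu$ by local uniqueness of Whittaker models, and because $\Phi$ and $f_s$ are factorizable.

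For part (b), at unramified places I take spherical $W_\nu$, $\Phi_\nu=1_{Y(\mathcal{O}_\nu)}$ and normalised $f_{s,\nu}$. The Iwasawa decomposition reduces $\calZ(W_\nu,s;\Phi_\nu)$ to a sum over dominant coweights $\lambda$ of $W_\nu(\lambda(\varpi))$ times an explicit weight, which comes from $\Phi_\nu$ evaluated on the torus and the modulus characters. Next I insert the Casselman--Shalika formula $W_\nu(\lambda(\varpi))=\delta^{1/2}(\lambda(\varpi))\,\mathrm{ch}\,V_\lambda(t_\pi)$. I then need to recognize
\[
\sum_\lambda \mathrm{ch}\,V_\lambda(t_\pi)\,q^{-\langle\lambda,\cdot\rangle s}=\frac{L(s,\pi_\nu,\tau)}{\text{normalising $L$-factors of }f_s}
\]
and match this against the identity $\det(1-q^{-s}\tau(t_\pi))^{-1}=\sum_k \mathrm{ch}\,\mathrm{Sym}^k\tau(t_\pi)\,q^{-ks}$. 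The multiplicity-free hypothesis on $(G^\vee,\tau)$ is exactly what makes this work: each $\mathrm{Sym}^k\tau$ decomposes without multiplicity into $V_\lambda$'s, with $\lambda$ ranging over a free monoid of highest weights. So the required identity reduces to checking that the torus support cut out by $\Phi_\nu$ is exactly this monoid, plus a Cauchy-type geometric-series computation. At the ramified places in $S$, one only needs the standard fact that the local integrals $\calZ(W_\nu,s;\Phi_\nu)$ converge and can be chosen nonvanishing.

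I expect the main obstacle to be step (iii) of the unfolding: carrying out the root exchanges uniformly across the different cases in Appendix \ref{sec: Appendix A Coisotropic symplectic representations}, and checking that the Weil representation action is compatible with the conjugations used. A close second is matching the torus support of $\Phi_\nu$ with the monoid in the $\mathrm{Sym}^k\tau$ decomposition. My first attempt at both would proceed case by case using the tables, with the exchange lemma stated once abstractly.
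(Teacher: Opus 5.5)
\textbf{Verdict.} Your overall architecture is the paper's: root exchange, unfolding in which every non-generic orbit is negligible by cuspidality, Fourier expansion down to the Whittaker model, Euler factorisation by uniqueness of Whittaker models, and then Iwasawa, Casselman--Shalika--Shintani and a branching identity. However, two of the mechanisms you state would fail as written.

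\textbf{The local identity in (b).} It is not true that each $\mathrm{Sym}^k\tau$ is multiplicity free for the direct-sum rows $(\GL_n,\std\oplus\std)$, $(\GSp_{2n},\std\oplus\std)$ and $(\Spin_8,\std\oplus\Spin)$. For instance, $\mathrm{Sym}^2(\std_n\oplus\std_n)$ contains $\mathrm{Sym}^2\std_n$ three times, and $\mathrm{Sym}^2(\std_8\oplus\Spin_8)$ contains the trivial representation twice. So in one variable the monoid generators do not map injectively to highest weights, and no ``torus support'' argument can produce the required multiplicities. Multiplicity-freeness holds only for $G^\vee\tm\mathbb{G}_m^2$, i.e.\ for the bigraded pieces $\mathrm{Sym}^a\tau_1\otimes\mathrm{Sym}^b\tau_2$. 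That is why these periods carry two variables $s,w$ (and twists $\chi,\mu$), and why the local identity must be proved in bigraded form. The paper uses the closed formula for $G(a,s,\chi)$ in Lemma~\ref{lem: G(a,s,chi) function}, which is a finite geometric-sum weight rather than an indicator. It combines this with the Pieri-type product of Lemma~\ref{lem: Schur polynomial identities} and the bigraded series of Lemma~\ref{lem: Cauchy identities}(d), whose factor $\zeta_F(2s)\zeta_F(2w)$ comes from the two Eisenstein series. In addition, Iwasawa alone does not reduce $\calZ$ to a torus sum. The root exchanges leave unipotent integrations, e.g.\ over $B\in\Mat_{n-2,2}$, or over $r$ in the $D_4$ case. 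These collapse only through the support statements of Lemma~\ref{lem: Vanishing Whittaker}.

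\textbf{The unfolding order in (a).} Your steps (i)--(iii) cannot be run in the stated order when a theta series is present but no Eisenstein series is. This happens for $\calD_{2,n}$ with $n\ge5$ and for the glued periods with $m,n\ge3$, where the variables enter only through $|\det g|^{s-1/2}$. There is then nothing to collapse in (i). Nor can $\Theta_\psi^{\Phi}$ be unfolded over $H(k)$-orbits, because $\rho_H$ is irreducible (e.g.\ $\std_4\otimes\wedge^2\std_4$, or $\std_2^{\otimes3}$), so $H$ preserves no Lagrangian.

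The missing step, which the paper performs before touching the theta series, goes as follows. After the root exchange, Fourier-expand the $\GL_n$-coefficient along an abelian unipotent subgroup ($V_{4,n}$), on whose characters $\GL_4(k)$ acts with two orbits. The trivial orbit vanishes by cuspidality. The stabiliser of the other orbit is $S'(\GSp_4\tm\GL_3)U_{3,1}$, which lies in the stabiliser of a Lagrangian. Only then can Lemma~\ref{lem: group action} be used to discard the non-generic theta orbits.

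Two further corrections to your steps. In $\calD_{m,3}$ the Eisenstein series lives on $\GL_{12}$, not on $H$, so (i) must be replaced by the double-coset analysis of Lemma~\ref{lem: Double coset decomp}. Also, the unfolded integral is over a quotient by a unipotent group (times part of the centre), not by the full stabiliser $R(\A)$. The reductive part of $R$ is absorbed by the final Fourier expansions; for $\calP^{\GL}_{\calD,n}$ the domain is all of $\GL_2(\A)$.
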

    \footnotetext[1]{Here we consider a period integral to be new and different from those previously studied Rankin-Selberg integrals, if the integration data between such integrals are different. For instance, Eisenstein series corresponding to different induced data and the use Bessel-Fourier coefficient as compared to Fourier-Jacobi coefficient.}

    \footnotetext[2]{We excluded the period integral representation for $(G^\vee, \tau) = (\underline{\GL}_2 \tm G, \underline{\std}_2 \oplus (\underline{\std}_2 \oplus \std)$) for $G = \GL_n, \GSp_{2n}$.}
    
    \begin{center}
    \begin{tabular}{|c|c|}\hline
    $G^\vee$ & $(\tau, V)$ 
    \\ \hline
    $\GSp_{2a} \tm \GL_b;\, (a,b) \in \LL\{\begin{matrix}
        (a\geq3,b=2),\\ (a=2,b\geq4),\\ (a\geq2,b\geq3).
    \end{matrix}\RR\}$ & $\std \otimes \std$
    \\ \hline
    $\GSpin_{10}$ & $\Spin$
    \\ \hline
    $\Spin_8$ & $\std \oplus \Spin$
    \\ \hline
    $\GL_n,\GSp_{2n}$ & $\std \oplus \std$
    \\ \hline
    $G_1 \tm \underline{\GL}_2 \tm G_2; \, G_1,G_2 \in \{\GSp_{2m}, \GL_n\}$ & $(\std \otimes \underline{\std}) \oplus (\underline{\std} \otimes \std)$
    \\ \hline
    \end{tabular}
    \captionof{table}{Multiplicity free representations $(G^\vee, \tau)$}
    \label{Table: Multiplicity-free repn}
    \end{center}

\subsection{Organisation of paper} The outline of this paper is as follows. Firstly, in Section \ref{sec: hyperspherical varieties} we recall some basic properties of hyperspherical varieties and discuss two families of such varieties, namely distinguished polarised hyperspherical varieties and strongly tempered hyperspherical varieties. Also, in this section we will discuss the work of Mao-Wan-Zhang \cite{MWZ} on strongly tempered hyperspherical varieties. Next, in Section \ref{sec: Preliminaries} we will introduce some preliminary notations and the degenerate Whittaker coefficient. Moving on, in Section \ref{sec: Root Exchanges and Auxiliary Integrals} we perform the root exchanges on the degenerate Whittaker coefficient and introduce some auxiliary integrals. Then, in Section \ref{sec: The Period integrals and their unfolding process} we construct the period integrals and carry out their unfolding process. In Sections \ref{sec: Unramified Computation} and \ref{sec: Relation with previously studied Rankin-Selberg integrals} we will perform the local unramified computation and discuss some relations with the period integrals studied here and previously studied Rankin-Selberg integrals. Finally, in Appendix \ref{sec: Appendix A Coisotropic symplectic representations} we will discuss the classification of distinguished polarised strongly tempered hyperspherical varieties via the classification of coisotropic finite dimensional symplectic representations.

\subsection{Acknowledgement} I would first like to thank Lei Zhang for providing advice, support and encouragement. Additionally, I would like to thank Taiwang Deng, Dihua Jiang, Anlun Li, Weixiao Lu, Guodong Tang, Chen Wan and Guodong Xi for helpful discussions. Lastly, I would like to thank Joseph Hundley, Wee Teck Gan and Bryan Peng Jun Wang for sharing insights on their work \cite{GH} and \cite{GW} respectively.  

\section{Hyperspherical varieties}
\label{sec: hyperspherical varieties}
In this section, we will first recall some basic results of a hyperspherical variety and the definition of hyperspherical quadruple. Then, we will discuss two sub-family of hyperspherical variety. Namely, they are of \emph{distinguished polarized} (DP) type and \textit{strongly tempered} (ST) type. Finally, we will explain the relationship between these hyperspherical varieties and Rankin-Selberg integrals.

In \cite{BZSV}, the authors Ben-Zvi, Sakellaridis and Venkatesh defined and studied a class of Hamiltonian $G$-varieties defined over $\C$ (or an algebraically closed field of characteristic zero) known as \emph{hyperspherical varieties} \cite[\textsection 3.5.1]{BZSV}. A key result shown in \emph{loc. cit} is a structure theorem  \cite[Theorem 3.6.1]{BZSV} for hyperspherical variety, stating that any  hyperspherical $G$-variety arises from the following initial data:
\begin{enumerate}
    \item [(1)] A map $\iota: H \times \SL_2 \to G$ with $H \subset Z_G(\iota(\SL_2))$ a spherical subgroup,
    \item [(2)] A  finite dimensional symplectic representation $(\rho_H,S)$ of $H$.
\end{enumerate}
With these initial data, one can construct a hyperspherical $G$-variety $M$ via the process of Whittaker induction (see \cite[\textsection 3.6]{BZSV} and \cite[\textsection 4]{GW}). With that, this motivates the following definition.
\begin{definition}
    \cite[Definition 1.2]{MWZ}
    \label{defn: hyperspherical datum}
    A quadruple $\mathcal{D} = (G,H,\rho_H,\iota)$ is hyperspherical (or BZSV) if the associated $G$-Hamiltonian variety $\mathcal{M}_\mathcal{D}$ is hyperspherical.
\end{definition}
Amongst these hyperspherical varieties, one particular family highlighted in \cite[\textsection 4]{BZSV} is those of \textit{distinguished polarised} type.

\subsection{Distinguished Polarised Hyperspherical Variety}
\begin{definition}
    \cite[\textsection 4.1]{BZSV}
    \label{defn: DP hyperspherical variety}
    A hyperspherical variety $\mathcal{M} = \mathcal{M}_\mathcal{D}$ is of \textit{distinguished polarised} (DP) type if $\mathcal{M}$ has the structure of a twisted cotangent bundle. Equivalently, the symplectic representation $\rho_H$ in the corresponding BZSV quadruple $\mathcal{D} = (G,H,\rho_H,\iota)$ is of the form $\rho_H = \tau \oplus \tau^\vee$ for some representation $\tau$ of $H$.
\end{definition}

We remark that the study of such DP-hyperspherical varieties is a generalisation of the earlier work on spherical varieties \cite{SV} (i.e. those with corresponding BZSV quadruples of the form $\mathcal{D} = (G,H,0,1)$, where $0$ is the zero vector space and $1$ is the trivial map). Moreover, in \cite[\textsection 4]{BZSV}, the authors have devised an algorithm to compute the dual of DP-hyperspherical variety. However, given a BZSV quadruple $\mathcal{D} = (G,H,\rho_H,\iota)$ there is no known systematic procedure to determine its dual $\mathcal{D}^\vee$. Nevertheless, Mao, Wan and Zhang \cite{MWZ} have provided an algorithm to compute dual of a certain of class of hyperspherical varieties known as those of \textit{symplectic vector space} type. We also remark that there is also a recent work by Tang, Wan and Zhang \cite{TWZ} in constructing an algorithm to derive the dual of a BZSV quadruple $\mathcal{D} = (G,H,\rho_H,\iota)$ where $G$ is a simple reductive group.

\subsection{Strongly Tempered Hyperspherical Variety}
\begin{definition}
    \cite[Definition 1.8]{MWZ}
    \label{defn: ST, symplectic vector space hyperspherical variety}
    A hyperspherical variety $\mathcal{M}_\mathcal{D}$ is of \textit{symplectic vector space} (sVP) type if the corresponding BZSV quadruple is of the form $\mathcal{D} = (G,G,\rho,1)$. Also, we call a hyperspherical variety $\mathcal{N}$ of \textit{strongly tempered} (ST) type if its BZSV dual $\mathcal{N}^\vee$ is of symplectic vector space type.
\end{definition}

For such sVP-hyperspherical varieties $\mathcal{M}_{\mathcal{D}}$ where $\mathcal{D} = (G,G,\rho,1)$, the hyperspherical conditions of $\mathcal{M}_{\calD}$ are equivalent to the following three conditions \cite[\textsection 1.3]{MWZ}: 
\begin{enumerate}
    \item [(1)] The symplectic representation $\rho$ is anomaly-free (see \cite[Definition 2.7]{MWZ} and \cite[Definition 5.12 and Proposition 5.1.5]{BZSV});

    \item [(2)] The symplectic representation $\rho$ is coisotropic;

    \item [(3)] The generic stabilizer of the representation $\rho$ of $G$ is connected.
\end{enumerate}

In \cite{MWZ}, the authors Mao, Wan and Zhang studied such sVP-hyperspherical varieties using the classification of coisotropic symplectic representations by Knop \cite{Knop} and Losev \cite{Losev} independently. In particular, for these coisotropic symplectic representations, the authors of \cite{MWZ} proposed an ST hyperspherical variety that is dual to those of sVP-type. In other words, \cite{MWZ} has provided a complete list of ST-hyperspherical variety that are dual to those of sVP-type under the proposed relative Langlands duality.

\subsection{Rankin-Selberg integrals}
One motivating reason to study such particular families hyperspherical varieties is due to their deep connection with integral representations of $L$-functions, i.e. Rankin-Selberg integrals. As pointed out in \cite[Example 4.3.12]{BZSV}, given a DP-hyperspherical variety $\mathcal{M}_\mathcal{D}$ where $\mathcal{D} = (G,H, \rho_H = \tau \oplus \tau^\vee,\iota)$, the authors have proposed a ``general recipe" to construct a Rankin-Selberg integral. Moreover, they proposed that all Rankin-Selberg integrals representing the L-function of $\tau$ can be derived in such manner.

Adding on, the study of such families of hyperspherical varieties can provide a better conceptual understanding of Rankin-Selberg integrals. As highlighted in \cite{MWZ}, many of previously studied Rankin-Selberg integrals can be derived from the above-mentioned ``recipe". In fact, majority of the previously studied Rankin-Selberg integrals that coincide with \cite{BZSV} current framework arises from DP-sVP hyperspherical varieties (see Appendix \ref{sec: Appendix A Coisotropic symplectic representations}).

We remark that there already exist Rankin-Selberg integrals for some of L-functions studied in this paper. For example in \cite{ACS}, the authors Asgari, Cogdell, and Shahidi have constructed a global Rankin-Selberg integral for $L(s, \pi \times \tau)$ where $\pi$ is an irreducible globally generic cuspidal representation of $\GSpin$ and $\tau$ is one of a general linear group. Also in \cite{Yan-Zhang}, the authors Yan and Zhang have constructed Rankin-Selberg integrals for the product of standard tensor L-functions of $\GL_l \tm \GL_m$ and of $\GL_l \tm \GL_n$ for $m + n < l$, where our period integral representation for $(G^\vee, \tau) = (\GL_m \tm \underline{\GL}_2 \tm \GL_n, (\std_m \otimes \underline{\std}_2) \oplus (\underline{\std}_2 \otimes \std_n))$ for $m,n\geq2$, is not covered in \emph{loc. cit}. In \cite{GH} and \cite{Gin95a}, the authors have constructed (multivariable) Rankin-Selberg integrals representing (products of) $\Spin$ $L$-functions defined on $\GSO_{2n}$ for $n=4,5$ respectively. Moreover, there are also existing Rankin-Selberg integrals that also associated to hyperspherical varieties such as the works of Lu-Wang-Xi \cite{LWX} and Lu-Xi \cite{LX} and the works of \cite{N}, \cite{BG} and \cite{Gin95} to say a few.

\section{Preliminaries}\label{sec: Preliminaries}

In this section, we will introduce some notations and subgroups, embeddings and maps, Fourier-coefficients and auxiliary integrals that will used throughout the entire paper.
\subsection{Notations and subgroups}
We will adopt the following notations. Throughout this paper, we will let $k$ to denote a number field, $\A$ to denote its ring of adeles and $F$ to be a local field. Also, we will let $G$ to denote a split connected reductive group. We fix a Borel subgroup $B_G = T_G N_G$ containing a maximal torus $T_G$, where $N_G$ is its unipotent radical. We let $\Delta$ denote a corresponding a set of simple roots. For matrix groups defined below, the Borel subgroup $B_G$ consists of all upper triangular matrices whereas $N_G$ will consist of all upper triangular unipotent matrices and the maximal torus $T_G$ will be the subgroup of all diagonal matrices. Let $\Mat_{m,n}$ be the space consisting of $m\tm n$ matrices and given $X \in \Mat_{m,n}$ we write ${}^t X$ to be the matrix transpose of $X$. We also denote $E_{i,j} \in \Mat_{n,n}$ to be the square $n\tm n$ matrix with ones on the $(i,j)$-th entry and zeros otherwise.
Let $ n(x) \in \GL_2, w_n \in \GL_n$ and $j_{2n} \in \GL_{2n}$ be the matrix elements given by
\begin{align} \label{eqn: w-n and j-2n}
    n(x) = \begin{pmatrix}
        1 & x\\ & 1
    \end{pmatrix}, && 
    w_n = \LL(\begin{smallmatrix}&& 1 \\ & \iddots &\\ 1 && \end{smallmatrix}\RR)\in \GL_n, && j_{2n} = \begin{pmatrix}
        & -w_n\\
        w_n &
    \end{pmatrix}\in \GL_{2n}.
\end{align}
Given any matrix element $g \in \GL_n$, we let ${}^t g$ to denote its transpose, and $g^\ast$ and $g_\ast$ to be
\begin{align}
    \label{eqn: g-upper ast, g-lower ast}
    g^\ast = w_n {}^t g^{-1}w_n, && g_\ast = \frac{1}{\det g} \cdot g.
\end{align}
In our case, $G$ will denote any of the following groups $\GL_n, \GSp_{2n}, \GSpin_{2n+1}$ and $\GSO_{2n}$ together with pairwise copies of them. We will fix the following conventions of $\GSp_{2n}$ and $\GSO_{n}$ given as follows
\begin{align*}
    \GSp_{2n} = \{g \in \GL_{2n} \mid 
    {}^t g j_{2n} g = \lambda(g) j_{2n}
    \}, &&
    \GSO_{n} = \{g \in \GL_{n} \mid
    {}^t g w_{n} g = \lambda(g) w_{n} \},
\end{align*}
where $\lambda(g)$ is the similitude of $g$. From this, we define $\Sp_{2n}$ and $\SO_{n}$ as the subgroups of $\GSp_{2n}$ and $\GSO_{n}$ respectively, consisting of all $g$ such that its similitude $\lambda(g)$ is trivial.
As for the odd Spin similitude group $\GSpin_{2n+1}$, we will define it via its based root datum.
\subsubsection{Root datum of $\GSpin_{2n+1}$}
According to \cite[\textsection 4]{HS}, the based root datum of the connected split reductive group $\GSpin_{2n+1}$ is given by $(X,R,\Delta,X^\vee,R^\vee,\Delta^\vee)$, where $X$ and $X^\vee$ are $\Z$-modules generated by generators $e_0,e_1,\ldots,e_n$ and $e_0^*,e_1^*,\ldots,e_n^*$, respectively. 
The roots and coroots are given by
\begin{alignat*}{3}
    R      & = & R_{2n+1}      & = \{\pm (e_i \pm e_j): 1 \leq i < j \leq n\} \cup \{\pm e_i : 1 \leq i \leq n\} \\
    R^\vee & = & R^\vee_{2n+1} & = \{\pm (e_i^* - e_j^*): 1 \leq i < j \leq n\} 
                             \cup \{\pm (e_i^* + e_j^* - e_0^*): 1 \leq i < j \leq n\} \\ 
           &   &               & \quad \cup \{\pm(2e_i^*-e_0^*): 1 \leq i \leq n\}
\end{alignat*}
Moreover, we fix the following choice of simple roots and coroots:
\begin{align*}
\Delta &= \{\alpha_1=e_1-e_2, \alpha_2=e_2-e_3, \ldots, \alpha_{n-1}=e_{n-1}-e_n, \alpha_n=e_n\},\\
\Delta^\vee &= \{\alpha_1^\vee=e_1^*-e_2^*, \alpha_2^\vee=e_2^*-e_3^*, \dots,\alpha_{n-1}^\vee= e_{n-1}^*-e_n^*, \alpha_{n}^\vee= 2e_n^*-e_0^*\}.
\end{align*}
The based root datum determines, up to isomorphism, the group $\GSpin_{2n+1}$ together with a Borel subgroup $B_{\GSpin_{2n+1}}$ and a split maximal torus $T_{\GSpin_{2n+1}} \subset B_{\GSpin_{2n+1}}$. Analogously, consider $\SO_{2n+1} \supset B_{\SO_{2n+1}} \supset T_{\SO_{2n+1}}$, there is a projection
\begin{equation*}
\operatorname{proj}: \GSpin_{2n+1} \rightarrow \SO_{2n+1},
\end{equation*}
which induces isomorphisms on unipotent varieties. We can further assume this projection preserves the choice of Borel subgroups and split maximal torus, i.e. 
\begin{align*}
    B_{\GSpin_{2n+1}} = \operatorname{proj}^{-1}(B_{\SO_{2n+1}}),&& 
    T_{\GSpin_{2n+1}} = \operatorname{proj}^{-1}(T_{\SO_{2n+1}}).
\end{align*}
We will represent an element of the maximal torus $T_{\GSpin_{2n+1}}$ in the form
\begin{align*}
    e_0^*(a_0)e_1^*(a_1)\cdots e_n^*(a_n), && a_i \in \GL_1.
\end{align*}

\subsubsection{Matrix group isomorphism of low rank $\GSpin$ groups}
\label{sec: Accidental Isomorphism of low rank GSpin groups}
It is well-known that there exists accidental isomorphism for low rank $\GSpin$ groups such that
\begin{align*}
    \GSpin_4 \cong& 
    G(\SL_2 \tm \SL_2) = \{(g_1,g_2) \in \GL_2 \tm \GL_2 \mid \det g_1 = \det g_2\},\\
    \GSpin_5 \cong&
    \GSp_4,\\
    \GSpin_6 \cong& \{(g,z) \in \GL_4 \tm \GL_1 \mid \det g = z^2\}.
\end{align*}
From \cite{AC}, we choose the following pinnings for $\GSpin_4$ and $\GSpin_6$. In $\GSpin_4$, we identity the generators of $\GSpin_4$ in $G(\SL_2 \tm \SL_2)$ as
\begin{align*}
    x_{e_1+e_2}(r) =& (n(r), I_2),&&
    x_{e_1-e_2}(r) =(I_2, n(r)),\\
    x_{-(e_1+e_2)}(r) =& ({}^t n(r), I_2), &&
    x_{-(e_1-e_2)}(r) = (I_2, {}^t n(r)),
\end{align*}
\begin{align*}
    e_0^\ast(a_0)=&
    (a_0I_2,a_0I_2),  &&
    e_1^\ast(a_1)=
    (\diag(a_1,1), \diag(a_1,1)),&&
    e_2^\ast(a_2)=
    (\diag(a_2,1), \diag(1,a_2)).
\end{align*}
With these, we realise $G(\SL_2 \tm \SL_2) \cong \GSpin_4$ as the subgroup of $\GSpin_{2n+1}$ for $n\geq2$ by
\begin{align} 
    \label{eqn: embedding GSpin4}
    j_n:
    \GSpin_4 \cong \LL\langle
    x_{\pm(e_1+e_2)}, x_{\pm(e_1-e_2)}, e_0^\ast, e_1^\ast, e_2^\ast
    \RR\rangle \hookrightarrow \GSpin_{2n+1}.
\end{align}
In particular by identifying $\GSp_4$ with $\GSpin_5$, we realise $G(\SL_2 \tm \SL_2) \subset \GSp_4$ via the embedding:
\begin{align*}
    j_{2}: G(\SL_2 \tm \SL_2) \hookrightarrow \GSp_4; &&
    (g_1,g_2) \mapsto \begin{pmatrix}
        a_1 &&b_1\\
        &g_2&\\
        c_1&&d_1
    \end{pmatrix}, && g_1 = \begin{pmatrix}
        a_1 & b_1\\
        c_1 & d_1
    \end{pmatrix} \in \GL_2.
\end{align*}
Similarly for $\GSpin_6$, we identify its generators in $\{(g,z) \in \GL_4 \tm \GL_1 \mid \det g = z^2\}$ as
\begin{align*}
    &x_{e_2+e_3}(r) = (\LL( \begin{smallmatrix}
        1&r&&\\
        &1&&\\
        &&1&\\
        &&&1
    \end{smallmatrix} \RR),1), && x_{e_1+e_3}(r) = (\LL( \begin{smallmatrix}
        1&&r&\\
        &1&&\\
        &&1&\\
        &&&1
    \end{smallmatrix} \RR),1), && x_{e_1+e_2}(r) = (\LL( \begin{smallmatrix}
        1&&&r\\
        &1&&\\
        &&1&\\
        &&&1
    \end{smallmatrix} \RR),1),\\
    &x_{e_1-e_2}(r) = (\LL( \begin{smallmatrix}
        1&&&\\
        &1&r&\\
        &&1&\\
        &&&1
    \end{smallmatrix} \RR),1), && x_{e_1-e_3}(r) = (\LL( \begin{smallmatrix}
        1&&&\\
        &1&&r\\
        &&1&\\
        &&&1
    \end{smallmatrix} \RR),1), && x_{e_2-e_3}(r) = (\LL( \begin{smallmatrix}
        1&&&\\
        &1&&\\
        &&1&r\\
        &&&1
    \end{smallmatrix} \RR),1),
\end{align*}
and
\begin{align*}
    &e_0^\ast(t) = (tI_4,t^2), && 
    e_1^\ast(t) = (\LL( \begin{smallmatrix}
        t&&&\\
        &t&&\\
        &&1&\\
        &&&1
    \end{smallmatrix} \RR),t), && e_2^\ast(t) = (\LL( \begin{smallmatrix}
        t&&&\\
        &1&&\\
        &&t&\\
        &&&1
    \end{smallmatrix} \RR),t), && e_3^\ast(t) = (\LL( \begin{smallmatrix}
        t&&&\\
        &1&&\\
        &&1&\\
        &&&t
    \end{smallmatrix} \RR),t).
\end{align*}
With these, we identify $\GSpin_6$ as a subgroup of $\GSpin_{2n+1}$ for $n\geq3$ by
\begin{align}
    \label{eqn: embedding GSpin6}
    j_n:
    \GSpin_6 \cong \langle x_{\pm(e_1\pm  e_2)}, x_{\pm(e_1\pm e_3)}, x_{\pm(e_2\pm e_3)}, 
    e_0^\ast, e_1^\ast, e_2^\ast, e_3^\ast \rangle \hookrightarrow \GSpin_{2n+1}.
\end{align}
Finally, for the groups $\GSO_{2n}$ we follow the notations in \cite{GH}. Let $\alpha_i (1 \leq i \leq n)$ denote the simple roots given by $\alpha_j = e_j -e_{j+1}$ for $1 \leq j \leq n-1$ and $\alpha_n = e_{n-1} + e_n$, and let $x_{\alpha_i}(r)$ denote the one-dimensional unipotent subgroup corresponding to the root $\alpha_i$. The roots are labelled such that
\begin{align*}
    x_{\alpha_i} = I_{2n} + r e_{i,i+1}'
\end{align*}
for $1 \leq i \leq n-1$ and $x_{\alpha_n} = I_{2n} + r e_{n-1,n+1}'$ where $e_{i,j}' = E_{i,j} - E_{2n+1-j, 2n+1-i}$.

Following the notations in \cite[\textsection 2.1]{MWZ}, we will define the following groups
    \begin{align} 
    \label{eqn: S(GL2 x GL2 x GL2)}
        S(\GL_2 \tm \GL_2 \tm \GL_2) =& \{(g_1,g_2,g_3) \in \GL_2^3 \mid \det(g_1g_2g_3) = 1\},\\
    \label{eqn: S(GL2 x GSO4)}
        S(\GL_2 \tm \GSO_4) =& 
        \{(g,h) \in \GL_2 \tm \GSO_4 \mid \det g \cdot \lambda(h) = 1\}.
    \end{align}
Additionally, we introduce the following groups
    \begin{align}
    \label{eqn: S'(GSp4 x GL4)}
        S'(\GSp_4 \tm \GL_4) =& \{(g,h) \in \GSp_4 \tm \GL_4 \mid \lambda(g) \det h = 1\},\\
        \label{eqn: S''}
        S''(\GL_2^4) =& \{(g_1,g_2,g_3,g_4) \in \GL_2^4 \mid \det g_1 = \det g_2, \det g_3 = \det g_4\},\\
        \label{eqn: S-ast}
        S^\ast(\GL_2^5) =&\{(g_1,g_2,g_3,g_4, g_5) \in \GL_2^5 \mid \det g_1 = \det g_2, \det (g_2g_3g_4) = 1, \det g_4 = \det g_5\}.
    \end{align}

\subsection{Representations and Eisenstein series}
Throughout this paper, we will assume all automorphic representations to be irreducible cuspidal and globally generic. We will denote $\pi_n$ (and $\pi_n', \pi_n''$), $\tau_n$ (and $\tau_n'$) and $\sigma_n$ to be irreducible cuspidal and globally generic cuspidal representation of $\GL_n(\A), \GSpin_{2n+1}(\A)$ and $\GSO_{2n}(\A)$ respectively. Let $\Xi_n$ be any such representation, we will denote $\omega_{\Xi_n}$ to denote its corresponding 
central characters, and let $\mathbf 1$ denote the trivial character. By the generic assumption above, each of these representations have a nonzero Whittaker model. That is, the space $\mathcal{W}(\Xi_n,{\psi_{N_G}})$ consisting of functions $W_{\varphi_n}^{{\psi_{N_G}}}$ defined by
\begin{align*}
    W_{\varphi_n}(g) = \int_{N_{G}(k) \sm N_{G}(\A)} \varphi(ug) \psi_{N_G}(u)\,du, && \varphi_n \in \Xi_n, g \in G(\A),
\end{align*}
is nonzero. Here $\psi_{N_G} : N_G(k) \sm N_G(\A) \to \C$ is the non-degenerate character defined by
\begin{align*}
    \psi_{N_G}(u) = \psi\LL( \sum_{\alpha \in \Delta} u_\alpha \RR), && u \in N_G(\A).
\end{align*}
In particular, in the case when $G= \GL_n$, we have
\begin{align*}
    \psi_{N_{\GL_n}}(u) = \psi\LL( \sum_{i=1}^{n-1} u_{i,i+1} \RR), && u \in N_{\GL_n}(\A).
\end{align*}
For the groups $\GSp_4$ and $\GL_3$, we also define the characters $\psi'_{N_{\GSp_4}} : N_{\GSp_4}(k) \sm N_{\GSp_4}(\A) \to \C$ and $\psi'_{N_{\GL_3}}: N_{\GL_3}(k) \sm N_{\GL_3}(\A) \to \C$ given by
\begin{align*}
    \psi_{N_{\GSp_4}}'( \LL(\begin{smallmatrix}
        1&-y_1&&\\
        &1&&\\
        &&1&y_1\\
        &&&1
    \end{smallmatrix}\RR)\LL(\begin{smallmatrix}
        1&&y_2&y_4\\
        &1&y_3&y_2\\
        &&1&\\
        &&&1
    \end{smallmatrix}\RR)) =& \psi(y_1+y_3),&&
    \psi_{N_{\GL_3}}'(\LL(\begin{smallmatrix}
        1&y_3&y_2\\&1&y_1\\&&1
    \end{smallmatrix}\RR)) = \overline{\psi(-y_1+y_3)}.
\end{align*}
We will drop the subscript $N_G$ when the context is clear. Aside from cuspidal generic representations, we will also introduce the following two types Eisenstein series considered in this paper, namely the \textit{mirabolic Eisenstein series} for $\GL_n$ and the \textit{Siegel Eisenstein series} for $\GSO_4$. We will follows Cogdell's exposition \cite[Chapter 5]{C} of the mirabolic Eisenstein series for $\GL_n$. Let $P_n$ be the parabolic subgroup of $\GL_n$ of type $(n-1,1)$ given by
    \begin{align}
        \label{eqn: GLn parabolic subgroup}
        P_n= \LL\{\begin{pmatrix}
            g & X\\ & t
        \end{pmatrix} \mid g \in \GL_{n-1}, t \in \GL_1, X \in \Mat_{n-1,1}\RR\}.
    \end{align}
    Let $\D_{P_n}$ be the modular character on $P_n$, $\chi: k^\tm \sm \A^\tm \to \C$ to be a Hecke character and $s\in\C$ be a complex parameter. We will define $I_n(s, \chi)= \Ind_{P_n(\A)}^{\GL_n(\A)} \D_{P_n}^s \otimes \chi^{-1}$. The induced space $I_n(s,\chi)$ admits a section $F_n(\cdot,s,\chi;\Phi)$ constructed from space of Schwartz-Bruhat functions $\mathcal S(\A^n)$ as follows. Given $\Phi_n \in \mathcal S(\A^n)$ and $g \in \GL_n(\A)$, we set
    \begin{align} \label{eqn: Godement section GLn}
        F_n(g,s,\chi;\Phi_n) = |\det g|^{s} \int_{\A^\tm}
        \Phi_n(a e_n g) \chi(a) |a|^{ns}\,d^\tm a, && e_n = (0,\dots,0,1) \in \Mat_{1,n}(\A).
    \end{align}
    With these, we define the mirabolic Eisenstein series for $\GL_n$ as
    \begin{align} \label{eqn: mirabolic Eisenstein series}
        E_n(g, s,\chi;\Phi_n) = \sum_{\gamma \in P_n(F) \sm \GL_n(F)} 
        F_n(\gamma g, s, \chi;\Phi_n).
    \end{align}
    The mirabolic Eisenstein series has the standard analytic properties. Namely, $E_n(g,s,\chi;\Phi_n)$ converges
    absolutely for $\text{Re}(s) > \frac{1}{2}$, which extends meromorphically in $s$, while satisfying a functional equation.  As for the Siegel Eisenstein series for $\GSO_4$, we first define the Siegel parabolic subgroup $P = P_{\GSO_4}$  of $\GSO_4$ given by
    \begin{align*}
        P = \LL\{ \begin{pmatrix}
            \lambda A & X\\ & A^\ast
        \end{pmatrix} \mid A \in \GL_2, \lambda \in \GL_1 \RR\},
    \end{align*}
    where $X$ is chosen such that the block matrix is in $\GSO_4$. Let $\D_P$ be the modular character of $P$, we define $I(s) = \Ind_{P(\A)}^{\GSO_4(\A)} \D_P^s$ as the usual induced space. Given a section $f_s \in I(s)$ we define the corresponding normalised Siegel Eisenstein series for $\GSO_4$ as
    \begin{align}
    \label{eqn: Siegel GSO4}
        E_P^\ast(g, f_s) := \zeta_k(2s) \sum_{\gamma \in P(k) \sm \GSO_4(k)} f_s(\gamma g), && g \in \GSO_4(\A), 
    \end{align}
    where $\zeta_k$ is the Dedekind zeta function of $k$.

\subsection{Embeddings and Maps}
\label{subsec: Embeddings and Maps}
In this subsection, we will introduce some embeddings and maps used in the construction of the period integrals $\calP_\mathcal{D}$. To start, for $2 \leq k \leq n$ we will define the embeddings $j_{n,2}: \GL_k \to \GL_n$, $\iota_n: \GL_n \to \Sp_{2n}$ given by
\begin{align*}
    j_{n,k} : \GL_k \to \GL_n; && g \mapsto \diag(g,I_{n-k}),\\
    \iota_n:\GL_n \to \Sp_{2n}; &&
    g \mapsto \diag(g,g^\ast).
\end{align*}
Next, recalling the group $S(\GL_2 \tm \GSO_4)$ in \eqref{eqn: S(GL2 x GSO4)} we will define the embeddings
\begin{align*}
    J_{D_4}: S(\GL_2 \tm \GSO_4) \to \GSO_8, && J_{D_5}: \GL_2 \to \GSO_{10},
\end{align*}
given by
\begin{align} \label{eqn: J-D4}
    J_{D_4}(g,h) = \begin{pmatrix}
        A &&&B\\
        &g^\ast&&\\
        &&g_\ast&\\
        C&&&D
    \end{pmatrix},
\end{align}
for $(g,h) \in S(\GL_2 \tm \GSO_4)$ where the notations $g^\ast$ and $g_\ast$ are defined in \eqref{eqn: g-upper ast, g-lower ast}, and $h = \begin{pmatrix}
    A & B\\ C&D
\end{pmatrix}$ written in terms of $2\tm2$ block matrices, and also 
\begin{align} \label{eqn: J-D5}
    J_{D_5}(\begin{pmatrix}
        a&b\\c&d
    \end{pmatrix}) = \diag(ad-bc, \begin{pmatrix}
        A & B\\ C &D 
    \end{pmatrix}, 1),
\end{align}
for $\begin{pmatrix}
        a&b\\c&d
    \end{pmatrix} \in \GL_2$, where $A = \LL(\begin{smallmatrix}
        a&&&\\&a&&\\&&a&\\&&&a
    \end{smallmatrix}\RR)$, $B =\LL(\begin{smallmatrix}
        b&&&\\&-b&&\\&&b&\\&&&-b
    \end{smallmatrix}\RR)$, $C = 
    \LL(\begin{smallmatrix}
        c&&&\\&-c&&\\&&-c&\\&&&c
    \end{smallmatrix}\RR)$ and $D = 
    \LL(\begin{smallmatrix}
        d&&&\\&d&&\\&&d&\\&&&d
    \end{smallmatrix}\RR)$.

Next, we will recall the Kronecker product map as
\begin{align}
    \label{eqn: Kronecker product map}
    \bigotimes: \GL_n \tm \GL_m \to \GL_{nm}; 
    && (g,h) \mapsto g \otimes h = \begin{pmatrix}
        g_{11}h&\cdots & g_{1n}h\\
        \vdots & \ddots & \vdots\\
        g_{n1}h& \cdots & g_{nn}h
    \end{pmatrix}.
\end{align}
We extend the Kronecker product map to matrix subgroups $A_n \tm B_m\subset \GL_n \tm \GL_m$, and denote $A_n \otimes B_m$ to be their image in $\GL_{nm}$. Using this Kronecker product map, we will construct three more maps used in the construction of the period integrals in subsequent sections. Firstly, we define $\iota_{D_4} : S(\GL_2 \tm \GSO_4) \to \Sp_8$ given by
\begin{align}
    \label{eqn: iota-D4}
    \iota_{D_4}(g,h) = \gamma_{D4}(h \otimes g) \gamma_{D4}^{-1},
\end{align}
for $(g,h) \in S(\GL_2 \tm \GSO_4)$ where $\gamma_{D_4} = \diag(1,-1,1,-1,1,1,1,1)$. Next, recalling the group \linebreak $S'(\GSp_4 \tm \GL_4)$ in \eqref{eqn: S'(GSp4 x GL4)}, we will define a map $\Ext^2: S'(\GSp_4 \tm \GL_4) \to \Sp_{24}$ as follows. To start, we define the map $M(\cdot, 2) : \GL_4 \to \GL_6$ as follows. Fix the ordered set 
\begin{align*}
    \mathcal{I} = \{12,13,14,23,24,34\}.
\end{align*}
For $g = (g_{ij})_{1 \leq i,j \leq 4} \in \GL_4$ we define $\Delta_{ij,kl}(g) = g_{ik} g_{jl} - g_{il} g_{jk}$ and define
\begin{align*}
    M(g,2) = (\Delta_{ij,kl}(g))_{ij,kl \in \mathcal{I}},
\end{align*}
and define an auxiliary group homomorphism $(\wedge^2)' : \GL_4 \to \GO_6$ given by
\begin{align*}
    (\wedge^2)'(h) := \eta M(h,2) \eta^{-1},
\end{align*}
for $h \in \GL_4$, where $\eta = \diag(1,1,1,1,-1,1)$ in $\GL_6$. With these we define $\Ext^2$ given by
\begin{align} \label{eqn: Ext2 map}
    \Ext^2(g,h) = (\gamma_{24}\eta_{24})
    ((\wedge^2)'h \otimes g) (\gamma_{24}\eta_{24})^{-1},
\end{align}
for $(g,h) \in S'(\GSp_4 \tm \GL_4)$ where $\gamma_{24}$ and $\eta_{24}$ are the matrix elements in $\GL_{24}$ given by
\begin{align*}
    \gamma_{24} = \begin{pmatrix}
        I_8 &&&\\
        &&-I_4&\\
        &I_4&&\\
        &&&I_8
    \end{pmatrix}, && 
    \eta_{24} = 
    \diag(I_{12}, 
    -I_2, I_2, 
    -I_2, I_2, 
    -I_2, I_2).
\end{align*}
We will denote $g \otimes \wedge^2 h$ to be the image $\Ext^2(g,h)$ in $\Sp_{24}$.
Lastly, recalling the group $S(\GL_2 \tm \GL_2 \tm \GL_2)$ in \eqref{eqn: S(GL2 x GL2 x GL2)}, we define the map $\rho : S(\GL_2 \tm \GL_2 \tm \GL_2) \to \Sp_8$ given by
\begin{align} \label{eqn: rho map}
    \rho(g_1,g_2,g_3) := \gamma_\rho(g_3 \otimes (g_1 \otimes g_2))\gamma_\rho^{-1},
\end{align}
for $(g_1,g_2,g_3) \in S(\GL_2 \tm \GL_2 \tm \GL_2)$ where $\gamma_\rho = \diag(-1,-1,-1,-1,-1,-1,1)$. 
\subsection{Degenerate Whittaker coefficients: Bessel and Fourier-Jacobi coefficients}
\label{sec: Degenerate Whittaker coefficients}
In this subsection, we will recall the process of constructing degenerate Whittaker coefficients of an automorphic form $\varphi$ of $G$, attached to a nilpotent orbit $\mathcal{O}$ of $G$. This subsection follows that of \cite[\textsection 1.2]{MWZ2}. Let $\iota : \SL_2 \to G$ and $\OO_\iota$ be the nilpotent orbit of $\frakg$ associated to it. It is well-known \cite{CM} that the collection of nilpotent orbits for classical groups are parametrised by (certain subsets of) partitions. For instance, for the classical group of type $A_{n-1}$, the nilpotent orbits are parametrised by the set of all partitions of $n$, and for groups of type $B_n$ they are in bijection with partitions of $2n+1$
where even parts occur with even multiplicity. Define
\begin{align*}
     U =U_{\OO_\iota} = \{g \in G \mid \lim_{t\to0}\iota(\begin{pmatrix}
        t &\\ & t^{-1}
    \end{pmatrix}) g \iota(\begin{pmatrix}
        t &\\ & t^{-1}
    \end{pmatrix}) = 1\}.
\end{align*}
The additive character $\psi$ induces a Weil representation $\Om_\psi$ of $U(\A)$ on the space of Schwartz function of $X(\A)$ (here $X(\A)$ is a Lagrangian subspace of the weight-one space of $\mathfrak{u}$ under the adjoint action of $\iota(\diag(t,t^{-1}))$). Given $\Phi \in \mathcal{S}(X(\A))$, we define the theta series
\begin{align} \label{eqn: Theta-series}
    \Theta_{\psi}^{\Phi}(u) = \sum_{\xi \in X(k)} \Om_\psi(u) \Phi(\xi),
\end{align}
and the degenerate Whittaker coefficient as
\begin{align}
    \label{eqn: degenerate Whittaker coefficient}
    \calP_\iota(\varphi,\Phi) = \int_{U(k) \sm U(\A)} \varphi(u) \Theta_{\psi}^{\Phi}(u)\,du,
\end{align}
for automorphic form $\varphi$ of $G(\A)$. For this and future sections, we will denote $[U]= U(k) \sm U(\A)$ for any unipotent group $U$. We remark that if the nilpotent orbit $\OO_\iota$ is even then the theta series $\Theta_{\psi}^{\Phi}(u)$ is a character of $U(k) \sm U(\A)$. With these, we call the degenerate Whittaker coefficient $\calP_\iota$ a Bessel coefficient $\calB_\iota$ (resp. Fourier-Jacobi coefficient $\calFJ_\iota$) if $\OO_\iota$ is even (resp. non-even). We will give examples of such degenerate Whittaker coefficients for $\GL_n,\GSpin_{2n+1},\GSO_8$ and $\GSO_{10}$.
\subsubsection{Bessel coefficient for General Linear Groups}
\label{sec: Bessel coefficient for General Linear Groups}
Let $m,k\geq1$. Consider the even nilpotent orbit of $\GL_{2m+2k+1}$ parametrised by the partition $[2m+1, 1^{2k}]$. In this case, we can choose the unipotent subgroup $U$ defined as
\begin{align*}
    U= \LL\{ u(A,B;v)=
    \begin{pmatrix}
        I_{2k}&\\&v
    \end{pmatrix}
    \begin{pmatrix}
        I_{2k}&&A\\
        &I_{m+1}&\\
        &&I_{m}
    \end{pmatrix}
    \begin{pmatrix}
        I_{2k}&&\\
        B&I_m&\\
        &&I_{m+1}
    \end{pmatrix}
    \mid
    \begin{matrix}
        v \in N_{\GL_{2m+1}},\\
        A \in \Mat_{2k,m},\\
        B \in \Mat_{m,2k}
    \end{matrix}
    \RR\}.
\end{align*}
We can also choose the character $\psi_{[2m+1, 1^{2k}]} : U(k) \sm U(\A) \to \C$ given by
\begin{align*}
    \psi_{[2m+1, 1^{2k}]}(u(A,B;v)) = \psi\LL( \sum_{i=1}^{2m} v_{i,i+1} \RR).
\end{align*}
With these, we define the Bessel coefficient of cusp form $\varphi$ for $\GL_{2m+2k+1}(\A)$ associated to nilpotent orbit $ [2m+1,1^{2k}]$ as
\begin{align}
    \label{eqn: Bessel coefficient GLn}
    \calB_{[2m+1, 1^{2k}]}(\varphi)= \int_{U(k) \sm U(\A)} \varphi(u) \psi_{[2m+1, 1^{2k}]}(u) \, du.
\end{align}
Then, for $g \in \GL_{2k}(\A)$ we write
\begin{align*}
    \calB_{[2m+1, 1^{2k}]}(\varphi)(g) =
    \int_{U(k) \sm U(\A)} \varphi(uj_{2m+2k+1,2k}(g)) \psi_{[2m+2k+1,1^{2k}]}(u) \, du.
\end{align*}
\subsubsection{Fourier-Jacobi coefficient for General Linear Groups}
\label{sec: Fourier-Jacobi coefficient for General Linear Groups}
We will first recall some basic preliminaries on the Weil representation following \cite{GRS}. Let $\calH_n$ be the Heisenberg group of $2n+1$ variables. We identity an element $h \in \calH_n$ with a triple $(x,y,z)$ where $x,y \in \Mat_{1,n}$ are row-vectors and $z \in \Mat_{1,1}$. Then, we define the following subgroups of $\calH_n$:
\begin{align*}
    X_n = \{(x,0,0) \in H_n\}, 
    && Y_n =\{(0,y,0) \in H_n\},
    && Z_n = \{(0,0,z) \in H_n\}.
\end{align*}
The group operation in $H_n$ is given by
\begin{align*}
    (x_1,y_1,z_1) (x_2,y_2,z_2) := 
    (x_1+x_2, y_1 + y_2,
    z_1 + z_2 + \frac{1}{2}(x_1 w_n {}^t y_2 -  y_1 w_n {}^t x_2)).
\end{align*}
We will denote $\omega_\psi$ (or $\Omega_\psi$) to denote the Weil representation, which is a representation of the group $\calH_n(\A) \widetilde \Sp_{2n}(\A)$ realised on the Schwartz space $\mathcal{S}(\A^n)$. We have the following explicit formulas for the Weil representation
\begin{align}
    \label{eqn: formula Weil - Heisenberg}
    \omega_\psi( \langle (0,y,z) (x,0,0) , \e \rangle)\phi(\xi) =& \e \psi(z + \xi w_n {}^t y) \phi(x+\xi),\\
    \label{eqn: formula Weil - Levi Siegel}
    \omega_\psi(\langle \begin{pmatrix}
        m &\\ & m^\ast
    \end{pmatrix}, \e \rangle) \phi(\xi) =& \e \gamma_\psi(\det m) |\det m|^{1/2} \phi(\xi m),\\
    \label{eqn: formula Weil - unipotent Siegel}
    \omega_\psi(\langle \begin{pmatrix}
        I_n & T\\ & I_n
    \end{pmatrix}, \e \rangle) \phi(\xi) =& \e \psi\LL( \frac{1}{2} \xi T w_n {}^t \xi\RR) \phi(\xi),
\end{align}
where $\phi \in \calS(\A^n), (0,y,z)(x,0,0) \in \calH_n(\A), \e \in \{\pm 1\}$, $\gamma_\psi$ is the Weil-index associated to $\psi$, and $m \in \GL_k(\A), \begin{pmatrix}
    I_n & T\\ & I_n
\end{pmatrix} \in \Sp_{2n}(\A)$. In these formulas, we view $\xi \in \A^k$ as a row vector.

Let $m\geq1$ and $k\geq 2$. Consider the non-even nilpotent orbit of $\GL_{2m+k}$ parametrised by the partition $[2m, 1^{k}]$. In this case, we can choose the unipotent subgroup $U$ defined as
\begin{align*}
    U = \LL\{
    \begin{pmatrix}
        I_k &\\ & v
    \end{pmatrix}
    \LL(\begin{smallmatrix}
        I_k&&A\\
        &I_{m+1}&\\
        &&I_{m-1}
    \end{smallmatrix}\RR)
    \LL(\begin{smallmatrix}
        I_k&&\\
        B&I_{m-1}&\\
        &&I_{m+1}
    \end{smallmatrix}\RR)
    \LL(\begin{smallmatrix}
        I_k&&Y&\\
        &I_m&&\\
        &&1&\\
        &&&I_{m-1}
    \end{smallmatrix}\RR)
    \LL(\begin{smallmatrix}
        I_k&&&\\
        &I_{m-1}&&\\
        X&&1&\\
        &&&I_m
    \end{smallmatrix}\RR)
    \mid \begin{matrix}
        v \in N_{\GL_{2m}},\\
        A \in \Mat_{k,m-1},\\
        B \in \Mat_{m-1,k},\\
        X \in \Mat_{1,k},\\
        Y \in \Mat_{k,1}
    \end{matrix} 
\RR\},
\end{align*}
where elements of $U$ are denoted as $u(A,B,X,Y;v)$. Here, we can choose the rank-one subspace $\frakg_1$ to admit the polarisation $X\oplus Y$ given by
\begin{align*}
    X =& \langle E_{m+k,1}, E_{m+k,2},\dots, E_{m+k,k} \rangle, &&
    Y = \langle E_{1,m+k+1}, E_{2,m+k+1},\dots, E_{k,m+k+1} \rangle.
\end{align*}
We extend $\frakg_1$ to a Heisenberg group $\calH_k = \frakg_1 \oplus \G_a$ of $2k+1$ variables where $\G_a$ is realised as $\G_a \cong \langle E_{k+m,k+m+1} \rangle$. Also, we define the subgroup $U_{2,2m+k} \subset U$ consisting of elements of the form $u(A,B,0,0;v)$ and define the character $\psi_{[2m, 1^{k}]} : U_{2,2m+k}(k) \sm U_{2,2m+k}(\A) \to \C$ as
\begin{align*}
    \psi_{[2m, 1^{k}]}(u(A,B,0,0;v)) = \psi\LL( \sum_{i=1}^{2m-1} v_{i,i+1} \RR).
\end{align*}
With these, we define the Fourier-Jacobi coefficient of cusp form $\varphi$ for $\GL_{2m+k}(\A)$ associated to nilpotent orbit $[2m,1^{k}]$ as $\calFJ_{[2m, 1^{k}]}(\varphi;\Phi_k)$ defined by
\begin{align}
    \label{eqn: Fourier-Jacobi coefficient GLn}
    \calFJ_{[2m, 1^{k}]}(\varphi;\Phi_k)=
    \int_{[U_{2,2m+k}]} \int_{[\Mat_{1,k}]}
    &\int_{[\Mat_{k,1}]}
    \varphi(u_2 \LL(\begin{smallmatrix}
        I_k&&Y&\\
        &I_m&&\\
        &&1&\\
        &&&I_{m-1}
    \end{smallmatrix}\RR)\LL(\begin{smallmatrix}
        I_k&&&\\
        &I_{m-1}&&\\
        X&&1&\\
        &&&I_m
    \end{smallmatrix}\RR))\\
    \cdot&
    \Theta_\psi^{\Phi_k}((0,y,0)(x,0,0)) \psi_{[2m,1^k]}(u_2)\, du_2\,dX\,dY\notag,
\end{align}
where $\Phi_k \in \calS(X(\A)) \cong \calS(\A^k)$ and $x = (x_1,\dots, x_k) \in X(\A), y = (y_1,\dots, y_k) \in Y(\A)$ is realised in $\calH_k$ as
\begin{align*}
    (x_1,\dots, x_k) =& \sum_{i=1}^k x_i E_{m+k,i}, &&
    (y_1,\dots, y_k) = \sum_{i=1}^{k} y_{i} E_{k+1-i,m+k+1}.
\end{align*}
Then, for $g \in \GL_k(\A)$ we write 
\begin{align*}
    \calFJ_{[2m,1^k]}(\varphi)(g,\Phi_k)=\gamma_\psi(\det g)^{-1}\int_{[U_2]} \int_{[\Mat_{1,k}]}
    \int_{[\Mat_{k,1}]}
    &\varphi(u_2 \LL(\begin{smallmatrix}
        I_k&&Y&\\
        &I_m&&\\
        &&1&\\
        &&&I_{m-1}
    \end{smallmatrix}\RR)
    \LL(\begin{smallmatrix}
        I_k&&&\\
        &I_{m-1}&&\\
        X&&1&\\
        &&&I_m
    \end{smallmatrix}\RR)
    j_{2m+k,k}(g))\\
    \cdot&\Theta_\psi^{\Phi_k}((0,y,0)(x,0,0)\iota_k(g)) \psi_{[2m, 1^{k}]}(u_2)\, du_2\,dX\,dY.
\end{align*}
\subsubsection{Bessel coefficients on Spin Similitude Groups}
\label{sec: Bessel coefficient GSpin}
We will introduce two Bessel coefficients associated to the nilpotent orbits $[2n-3,1^4]$ and $[2n-5,1^6]$ of $\GSpin_{2n+1}$. Recall that there is an isomorphism between unipotent subgroups of $\GSpin_{2n+1}$ and $\SO_{2n+1}$, so we will express unipotent subgroups of $\GSpin_{2n+1}$ as unipotent matrix subgroups of $\SO_{2n+1}$. As in \cite{G}, we define $U_{n,k}$ as the subgroup
\begin{align*}
    U_{n,k} = \LL\{ 
    u(A,B;v)=
    \begin{pmatrix}
        I_k &&\\ & v & \\ &&I_k
    \end{pmatrix} \LL(\begin{smallmatrix}
        I_k &&&A&\\
        &I_{n-k}&&&A'\\
        &&1&&\\
        &&&I_{n-k}&\\
        &&&&I_k
    \end{smallmatrix}\RR)
    \LL(\begin{smallmatrix}
        I_k &&&&\\
        B&I_{n-k}&&&\\
        &&1&&\\
        &&&I_{n-k}&\\
        &&&B'&I_k
    \end{smallmatrix}\RR)\mid \begin{matrix}
        v \in N_{\SO_{2n-2k+1}}\\
        A \in \Mat_{k,n-k}\\
        B \in \Mat_{n-k,k}
    \end{matrix}
    \RR\},
\end{align*}
where $A'$ and $B'$ are matrices such that $u(A,B;v) \in \SO_{2n+1}$. Also, we define the character $\psi_{n,k}$ by 
\begin{align*}
    \psi_{n,k}: U_{n,k}(k)\sm U_{n,k}(\A) \to \C; && \psi_{n,k}(u(A,B;v)) =& \psi\LL( \sum_{i=1}^{n-k} v_{i,i+1} \RR).
\end{align*}
With these, we define the Bessel coefficient of cusp form $\varphi$ for $\GSpin_{2n+1}(\A)$ associated to nilpotent orbit $[2n-3,1^4], n\geq 2$ (resp. $[2n-5,1^6], n\geq3$) as
\begin{align}
    \label{eqn: Bessel GSpin [2n-3,1^4]}
    \calB_{[2n-3,1^4]}(\varphi) 
    =&\int_{U_{n,2}(k) \sm U_{n,2}(\A)} \varphi(u) \psi_{n,2}(u)\,du,\\
    \label{eqn: Bessel GSpin [2n-5,1^6]}
    \calB_{[2n-5,1^6]}(\varphi)
    =&\int_{U_{n,3}(k) \sm U_{n,3}(\A)} \varphi(u) \psi_{n,3}(u)\,du.
\end{align}
Furthermore given $g_1 \in \GSpin_4(\A)$ and $g_2 \in \GSpin_6(\A)$, we write
\begin{align*}
    \calB_{[2n-3,1^4]}(\varphi)(j_n(g_1)) =
    \int_{U_{n,2}(k) \sm U_{n,2}(\A)} \varphi(uj_{n}(g_1)) \psi_{n,2}(u) \, du, && n\geq 2,\\
     \calB_{[2n-5,1^6]}(\varphi)(j_n(g_2))
     =\int_{U_{n,3}(k) \sm U_{n,3}(\A)} \varphi(u j_{n}(g_2)) \psi_{n,3}(u)\,du, && n \geq 3,
\end{align*}
where $j_n$ is the embedding of $\GSpin_4, \GSpin_6 \hookrightarrow \GSpin_{2n+1}$.

\subsubsection{Fourier-Jacobi coefficient of Special Orthogonal Similitude Groups}
\label{sec: FJ coefficient of GSO}
We will introduce the Fourier-Jacobi coefficient associated to the nilpotent orbit $[2^2,1^4]$ of $\GSO_8$ (resp. $[4^2,1^2]$ of $\GSO_{10}$). We choose the unipotent subgroup $U \subset \GSO_8$ associated to the nilpotent orbit $[2^2,1^4]$ of $\GSO_8$ defined as
\begin{align*}
    U = \LL\{u(X,Y;t)= 
    \begin{pmatrix}
        I_2 &&&\\
        &I_2 & T & \\
        &&I_2&\\
        &&&I_2
    \end{pmatrix}
    \begin{pmatrix}
        I_2 &&Y&\\
        &I_2 &  & Y'\\
        &&I_2&\\
        &&&I_2
    \end{pmatrix}
    \begin{pmatrix}
        I_2&&&\\
        X & I_2 &&\\
        &&I_2&\\
        &&X'&I_2
    \end{pmatrix}\mid \begin{matrix}
        T = \LL(\begin{smallmatrix}
        t &\\&-t
    \end{smallmatrix}\RR),\\
        Y = \LL(\begin{smallmatrix}
        y_7 & y_8\\ y_5 & y_6   
    \end{smallmatrix}\RR),\\
    X = \LL( \begin{smallmatrix}
        x_2 & x_4\\ -x_1&-x_3
    \end{smallmatrix}\RR)
    \end{matrix} 
    \RR\},
\end{align*}
where $X',Y'$ are matrices such that $u(X,Y;t) \in \GSO_8$. Here, we can choose the rank-one subspace $\frakg_1$ to admit the polarisation $X \oplus Y$ given by
\begin{align*}
    &X = \langle 
    -E_{4,1} + E_{8,5},
    E_{3,1} - E_{8,6},
    -E_{4,2} + E_{7,5},
    E_{3,2} - E_{7,6} \rangle,\\ 
    &Y = \langle 
    E_{2,5} - E_{4,7},
    E_{2,6} - E_{3,7},
    E_{1,5} - E_{4,8},
    E_{1,6} - E_{3,8} \rangle,
\end{align*}
and we extend $\frakg_1$ to a Heisenberg group $\calH_{4} = \frakg_1 \oplus \G_a$ of $9$ variables where $\G_a$ is realised as $\G_a \cong \langle E_{3,5} - E_{6,4} \rangle$. Also, we define $U_2 \subset U$ as the subgroup consisting of $u(0,0;t)$ and define the character $\psi_{[2^2,1^4]} : U_2(k)\sm U_2(\A) \to \C$ as
\begin{align*}
    \psi_{[2^2,1^4]}(u(0,0;t)) = \psi(t).
\end{align*}
With these, we define the Fourier-Jacobi coefficient of cusp form $\varphi$ for $\GSO_8(\A)$ associated to nilpotent orbit $[2^2,1^4]$ as 
\begin{align}
    \label{eqn: Fourier-Jacobi GSO8}
    \calFJ_{[2^2,1^4]}(\varphi;\Phi_4)= \int_{U(k) \sm U(\A)} \varphi(u) \Theta_\psi^{\Phi_4}((0,y,0)(x,0,0)) \psi(t)\, du,
\end{align}
where $\Phi_4 \in \calS(X(\A)) \cong \calS(\A^4)$ and $x = (x_1,\dots, x_4) \in X(\A), y = (y_1,\dots, y_4) \in Y(\A)$ is realised in $\calH_4$ as 
\begin{align*}
    &(x_1,\dots, x_4) = 
    x_1 (-E_{4,1} + E_{8,5})
    +x_2 (E_{3,1} - E_{8,6})
    +x_3 (E_{4,2} + E_{7,5})
    +x_4 (E_{3,2} - E_{7,6}), \\
    &(y_1,\dots, y_4) = 
    y_1 (E_{2,5} - E_{4,7})
    +y_2 (E_{2,6} - E_{3,7})
    +y_3 (E_{1,5} - E_{4,8})
    +y_4 (E_{1,6} - E_{3,8}). 
\end{align*}
Then, for $(g_1,g_2) \in S(\GL_2 \tm \GSO_4)$ defined in \eqref{eqn: S(GL2 x GSO4)} we write
\begin{align*}
    \mathcal{FJ}_{[2^2,1^4]}(\varphi)(J_{D_4}(g_1,g_2);\Phi_4) = \int_{U(k) \sm U(\A)}
    \varphi(uJ_{D_4}(g_1,g_2)) \Theta_\psi^{\Phi_4}((0,y,0)(x,0,0)\iota_{D_4}(g_1,g_2))\psi(t)\,du,
\end{align*}
where $J_{D_4}:S(\GL_2 \tm \GSO_4) \to \GSO_8$ and $\iota_{D_4} : S(\GL_2 \tm \GSO_4) \to \Sp_8$ are as defined in \eqref{eqn: J-D4} and \eqref{eqn: iota-D4} respectively.  
Finally, for the group $\GSO_{10}$ we first define some auxiliary subgroups. Let $V,W,Z,X$ and $Y$ be unipotent subgroups of $\GSO_{10}$ given by
\begin{align*}
    &V = \LL\{ \mathbf v(v_1,\dots, v_7) = 
    x_{\alpha_2}(v_1)
    x_{\alpha_3}(v_2)
    x_{\alpha_4}(v_3)
    x_{\alpha_1+\alpha_2+\alpha_3+\alpha_4}(v_4)
    x_{\alpha_2+\alpha_3}(v_5)
    x_{\alpha_2+\alpha_3+\alpha_4}(v_6)
    x_{\alpha_3+\alpha_4}(v_7)
      \RR\},\\
    &W = \{\mathbf w(w_1,w_2,w_3) =
    x_{\alpha_1+2\alpha_2+2\alpha_3+\alpha_4+\alpha_5}(w_1)
    x_{\alpha_2+\alpha_3+\alpha_4+\alpha_5}(w_2)
    x_{\alpha_2+2\alpha_3+\alpha_4+\alpha_5}(w_3)\},\\
    &Z = \{\mathbf z(z_1,z_2,z_3,z_4) =
    x_{-\alpha_1}(z_1)
    x_{-\alpha_1-\alpha_2-\alpha_3-\alpha_5}(z_2)
    x_{-\alpha_5}(z_3)
    x_{\alpha_3-\alpha_5}(z_4)\},
\end{align*}
and
\begin{align*}
    &X = \{\mathbf x(x_1,x_2) = 
    x_{-\alpha_1-\alpha_2-\alpha_3-\alpha_4-\alpha_5}(x_1) x_{-\alpha_1-\alpha_2}(x_2)\},\\
    &Y = \{\mathbf y(y_1,y_2) = 
    x_{\alpha_1+\alpha_2+\alpha_3}(y_1) x_{\alpha_1+\alpha_2+2\alpha_3+\alpha_4+\alpha_5}(y_2)\}.
\end{align*}
With these, we define the unipotent subgroup $U \subset \GSO_{10}$ associated to the nilpotent orbit $[4^2,1^2]$ of $\GSO_{10}$ defined as
\begin{align*}
    U = V W Z Y X.
\end{align*}
Hence, we can choose the rank-one subspace $\frakg_1$ to admit the polarisation $X \oplus Y$ given by
\begin{align*}
    X = \langle -E_{7,1} + E_{10,4}, E_{3,1} - E_{10,8} \rangle, &&
    Y = \langle E_{1,4} - E_{7,10}, E_{1,8} - E_{3,10} \rangle,
\end{align*}
and we extend $\frakg_1$ to a Heisenberg group $\calH_2 = \frakg_1 \oplus \G_a$ of $5$ variables where $\G_a$ is realised as $\G_a \cong \langle E_{2,3} - E_{8,9} \rangle$. Also, we define $U_2 = VWZ$ a subgroup of $U$ and define the character $\psi_{[4^2,1^2]} : U_2(k) \sm U_2(\A) \to \C$ as 
\begin{align*}
    \psi_{[4^2,1^2]}(u_2) = \psi(v_1 + v_2 + v_3),
\end{align*}
where $u_2 = \mathbf v(v_1,\dots, v_7) \mathbf w(w_1,w_2,w_3) \mathbf z(z_1,z_2,z_3,z_4)$. With these, we define the Fourier-Jacobi coefficient of the cusp form $\varphi$ for $\GSO_{10}(\A)$ associated to nilpotent orbit $[4^2,1^2]$ as
\begin{align}
    \label{eqn: Fourier Jacobi GSO10}
    \calFJ_{[4^2,1^2]}(\varphi;\Phi_2)
    =\int_{[V]} \int_{[W]} \int_{[Z]} \int_{[Y]} \int_{[X]}
    \varphi(v w z y x) \Theta_\psi^{\Phi_4}((0,y,0)(x,0,0)) \psi_{[4^2,1^2]}(v w z)\,dx\, dy\,dz\,dw\,dv,
\end{align}
where $\Phi_2 \in \calS(X(\A)) \cong \calS(\A^2)$ and $x = (x_1,x_2) \in X(\A), y = (y_1,y_2) \in Y(\A)$ is realised in $\calH_2$ as
\begin{align*}
    (x_1,x_2) = x_1(-E_{7,1} + E_{10,4}) + x_2 (E_{3,1} - E_{10,8}) , &&
    (y_1,y_2) = y_1(E_{1,4} - E_{7,10}) + y_2(E_{1,8} - E_{3,10}).
\end{align*}
Then, for $g \in \GL_2(\A)$ we write
\begin{align*}
    \calFJ_{[4^2,1^2]}(\varphi)(g;\Phi_2)=
     \gamma_\psi(\det g)^{-1}
    \int_{[V]} \int_{[W]} &\int_{[Z]} \int_{[Y]} \int_{[X]}
    \varphi(v w z y x J_{D_5}(g)) \\
    \cdot&\Theta_\psi^{\Phi_2}((0,y,0)(x,0,0)\iota_2((\det g) g^\ast)) \psi_{[4^2,1^2]}(v w z)
    \, dx\,dy\,dz\,dw\,dv.
\end{align*}

\section{Root Exchanges and Auxiliary Integrals}
\label{sec: Root Exchanges and Auxiliary Integrals}
In this section, we will perform the root exchanges for the degenerate Whittaker coefficients defined in the previous section and introduce some auxiliary integrals used in the construction of the global period integrals and the computations of the local unramified local integrals. For the root exchanges of the degenerate Whittaker coefficients, we will treat them separately in two cases: the Bessel coefficient and the Fourier-Jacobi coefficient similar to the previous section. The root exchanges for the Bessel coefficient follows that of \cite{LX,IT,MS,G}, whereas for the Fourier-Jacobi case, we follow a similar argument as those in \cite[\textsection 2]{GRS} and \cite[\textsection 3]{GJRS}.

\subsection{Root exchanges for Bessel coefficients}
\label{sec: root exch for Bessel}
To start, we recall the Bessel coefficient $\calB_{[2m+1, 1^{2k}]}(\varphi)$ of cusp form $\varphi$ for $\GL_{2m+2k+1}(\A)$ given in \eqref{eqn: Bessel coefficient GLn} as well as the Bessel coefficients $\calB_{[2n+1-2k,1^{2k}]}(\varphi)$ of cusp form $\varphi$ for $\GSpin_{2n+1}(\A)$ for $k=2,3$ given in \eqref{eqn: Bessel GSpin [2n-3,1^4]} and \eqref{eqn: Bessel GSpin [2n-5,1^6]} respectively. We have the following lemma due to \cite[Corollary 4.3]{LX}, \cite[Lemma 3]{G} and also \cite[\textsection 6.1]{ACS}.
\begin{lem}
    [Corollary 4.3 of \cite{LX}, Lemma 3 of \cite{G} and Section 6.1 of \cite{ACS}]
    \label{lem: root exchange Bessel} The Bessel coefficient $\calB_{[2m+1,1^{2k}]}(\varphi)$ of cusp form $\varphi$ of $\GL_{2m+2k+1}(\A)$ can be reduced to 
    \begin{align*}
        &\calB_{[2m+1,1^{2k}]}(\varphi) = \int_{
        [U_{(2k+1,1^{2m})}']
        } 
        \int_{\Mat_{m,2k}(\A)}
        \varphi(u
        \LL(\begin{smallmatrix}
            I_{2k}&&\\
            B&I_{m}&\\
            &&I_{m+1}
        \end{smallmatrix}\RR)) \psi_{[2m+1,1^{2k}]}(u)\,dB\,du,
    \end{align*}
    where $U_{(2k+1,1^{2m})}'$ is the unipotent subgroup of $\GL_{2m+2k+1}$ given by
    \begin{align*}
       U_{(2k+1,1^{2m})}' = \LL\{ u(A',v) = \begin{pNiceArray}{c|cc}
    I_{2k} & 0 & A'\\
    \hline
   &
  \Block{2-2}<\Large>{{v}}\\
  &
\end{pNiceArray} \mid v \in U_{\GL_{2m+1}}, A'\in \Mat_{2k,2m}, 0 \in \Mat_{2k,1} \RR\},
    \end{align*}
    and the character $\psi_{[2m+1,1^{2k}]} : [U_{(2k+1,1^{2m})}'] \to \C$ is given by
    \begin{align*}
        \psi_{[2m+1,1^{2k}]}(u) = \psi\LL( \sum_{i=1}^{2m} v_{i,i+1} \RR).
    \end{align*}
    Similarly, for $k=2,3$ the Bessel coefficients $\calB_{[2n+1-2k,1^{2k}]}(\varphi)$ of cusp form $\varphi$ of $\GSpin_{2n+1}(\A)$ can be reduced to 
    \begin{align*}
        &\calB_{[2n+1-2k,1^{2k}]}(\varphi)=
        \int_{[U_{\SO_{2n+1-2k}}]} \int_{[\Mat_{k,n-k-1}]}
        \int_{[U_{s,k}]} 
        \int_{[\Mat_{k, n-k}]}
        \int_{[\Mat_{n-k,k}]} \psi_{[2n+1-2k,1^{2k}]}(v)\\
        \cdot&
        \varphi(\LL( \begin{smallmatrix}
            I_k &&\\ & v & \\ && I_k
        \end{smallmatrix} \RR)
        \LL( \begin{smallmatrix}
        I_k&&C&&&&\\
        &1&&&&&\\
        &&I_{n-k-1}&&&&\\
        &&&1&&&\\
        &&&&I_{n-k-1}&&C'\\
        &&&&&1&\\
        &&&&&&I_k
        \end{smallmatrix}\RR)u \LL(\begin{smallmatrix}
        I_k &&&A&\\
        &I_{n-k}&&&A'\\
        &&1&&\\
        &&&I_{n-k}&\\
        &&&&I_k
    \end{smallmatrix}\RR)
    \LL(\begin{smallmatrix}
        I_k &&&&\\
        B&I_{n-k}&&&\\
        &&1&&\\
        &&&I_{n-k}&\\
        &&&B'&I_k
    \end{smallmatrix}\RR))
    \,dB\,dA\,du\,dC\,dv,
    \end{align*}
    where for $U_{s,k}$ is the unipotent subgroup generated by the root subgroups of the corresponding to the short roots $e_1,\dots, e_k$ of $\SO_{2n+1}$ and
    \begin{align*}
        \psi_{[2n+1-2k,1^{2k}]}(v) = \psi\LL( \sum_{i=1}^{n-k} v_{i,i+1} \RR), &&
        v \in U_{\SO_{2n+1-2k}}(\A).
    \end{align*}
\end{lem}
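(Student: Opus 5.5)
The plan is to prove both reductions by the standard root exchange lemma of Ginzburg--Rallis--Soudry (the ``exchanging roots'' lemma, \cite[\textsection 7.1]{GRS}), applied once in the $\GL_{2m+2k+1}$ case and once per block of roots in the $\GSpin_{2n+1}$ case.

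\textbf{General linear case.} Write the integration domain $U$ of \eqref{eqn: Bessel coefficient GLn} as $U = C \rtimes (\text{stuff})$, where:
\begin{itemize}
\item $C$ contains $N_{\GL_{2m+1}}$ and the block $A \in \Mat_{2k,m}$;
\item the block $B \in \Mat_{m,2k}$ (lower left) is the group $X$ to be exchanged;
\item $Y$ is the block of entries in rows $1,\dots,2k$ and the last $m$ columns of the middle $\GL_{m+1}$ part, i.e.\ the entries $A'$ minus the first $m$ columns.
\end{itemize}
Equivalently, $Y$ is the part of $A'\in \Mat_{2k,2m}$ not already in $U$, together with the zero column.

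We then check the hypotheses of the exchange lemma:
\begin{enumerate}
\item $X$ and $Y$ normalize $C$ and preserve $\psi_{[2m+1,1^{2k}]}$ on $C$.
\item $[X,Y] \subset C$.
\item The pairing $(x,y)\mapsto \psi([x,y])$ is nondegenerate, identifying $X(k)\backslash X(\A)$ with the Pontryagin dual of $Y(k)\backslash Y(\A)$.
\end{enumerate}
Condition (3) comes from commutators landing on the superdiagonal entries $v_{i,i+1}$ of the $\GL_{2m+1}$ block, which are paired by $\psi$. The lemma then gives
\[
\int_{[CX]} \varphi\,\psi \;=\; \int_{X(\A)}\int_{[CY]} \varphi(u x)\,\psi(u)\,du\,dx .
\]
Finally, $CY$ is exactly $U'_{(2k+1,1^{2m})}$ with character $\psi\bigl(\sum v_{i,i+1}\bigr)$, which is the claimed formula with $B$ integrated over $\Mat_{m,2k}(\A)$.

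If the exchange must be done one column at a time (as in \cite[Corollary 4.3]{LX}), I would induct on the columns of $B$. At step $j$, exchange the $j$-th column of $B$ for the corresponding row block of $A'$. Each intermediate group stays normalized, because the exchanged entries move strictly toward the upper triangle.

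\textbf{$\GSpin_{2n+1}$ case.} Since $\operatorname{proj}$ identifies unipotent subgroups, we may work inside $\SO_{2n+1}$. Starting from \eqref{eqn: Bessel GSpin [2n-3,1^4]} and \eqref{eqn: Bessel GSpin [2n-5,1^6]}, we do the analogous exchange:
\begin{itemize}
\item the block $B\in\Mat_{n-k,k}$ (below the $I_k$) is exchanged with the missing positive roots $e_i - e_j$ and $e_i + e_j$, with $i\le k<j$;
\item the short roots $e_1,\dots,e_k$ form $U_{s,k}$;
\item the block $C\in\Mat_{k,n-k-1}$ pairs with the $v_{i,i+1}$-character of the $\SO_{2n+1-2k}$ Whittaker part.
\end{itemize}
I expect to do this in two stages:
\begin{enumerate}
\item Exchange the first row of $B$ with $U_{s,k}$. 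Here the commutator of a short root with the lowest root in $B$ hits the middle simple root $e_{n-k}$, which is where the character is supported.
\item Exchange the remaining rows with the $C$ block.
\end{enumerate}
Each step is again an application of \cite[Lemma 7.1]{GRS}. In each stage we verify normalization and the nondegenerate pairing via the Chevalley commutator relations for type $B_n$; this follows \cite[Lemma 3]{G} and \cite[\textsection 6.1]{ACS}.

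\textbf{Main obstacle.} The hard part is the $\GSpin$ bookkeeping. One must choose an ordering of roots so that, at every stage:
\begin{itemize}
\item the partially enlarged group is actually a group normalized by the next $X$ and $Y$;
\item the character extends trivially;
\item the structure constants (signs, and the factor $2$ in short-root commutators) still give a perfect pairing.
\end{itemize}
The first ordering to try is decreasing height of the exchanged roots. The remaining claims are the absolute convergence needed to justify interchanging integrals, and these follow from rapid decay of the cusp form $\varphi$ on Siegel domains.
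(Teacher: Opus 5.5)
The paper gives no argument of its own for this lemma; it quotes \cite{LX}, \cite{G}, \cite{ACS}, where the exchange lemma of \cite{GRS} is applied one row at a time. Your tool is the right one, but the one-shot exchange you propose does not satisfy that lemma's hypotheses.

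\textbf{The $\GL$ case.} Write the $v$-block of $\GL_{2m+2k+1}$ in the global indices $2k+1,\dots,2k+2m+1$, and write $E_{p,q}$ also for the root group $I+tE_{p,q}$. Your $X$ consists of the $B$-entries $E_{2k+j,a}$ with $1\le j\le m$ and $1\le a\le 2k$. Your $Y$ consists of the entries $E_{a,2k+j'}$ with $2\le j'\le m+1$.
\begin{itemize}
\item For $m\ge 2$, $X$ and $Y$ contain the \emph{opposite} root groups $E_{2k+j,a}$ and $E_{a,2k+j}$, $2\le j\le m$. So $X$ and $Y$ generate copies of $\SL_2$, and $[X,Y]\not\subset C$.
\item $X$ does not normalise $C$: conjugating $E_{2k+i,2k+j}\in N_{\GL_{2m+1}}$ ($i<j\le m$) by $E_{2k+j,a}$ produces a factor in $E_{2k+i,a}\notin C$.
\item Your column-by-column fallback (column $a$ of $B$ against row $a$ of $A'$) contains exactly the same opposite pairs.
\end{itemize}
The exchange that works goes row by row from the bottom. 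For $j=m,m-1,\dots,1$, exchange $X_j=\{E_{2k+j,a}\}_a$ against $Y_{j+1}=\{E_{a,2k+j+1}\}_a$. Take $C_j$ generated by $N_{\GL_{2m+1}}$, the $A$-block, rows $2k+1,\dots,2k+j-1$ of $B$, and the columns $2k+j+2,\dots,2k+m+1$ already brought in. Then $X_j$ and $Y_{j+1}$ normalise $C_j$ and fix $\psi$, $[X_j,Y_{j+1}]$ lies in the root group of $v_{j,j+1}$, and the pairing is $\psi(\sum_a s_at_a)$. The order is forced: $Y_{j+1}$ is opposite to $X_{j+1}$, so row $2k+j+1$ of $B$ must be gone before column $2k+j+1$ enters.

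\textbf{The $\GSpin_{2n+1}$ case.} The same defect hits your stage (2), where $x_{e_j-e_i}$ and $x_{e_i-e_j}$ both occur. Stage (1) is also misassigned: $[x_{e_{k+1}-e_i},x_{e_i}]$ lies in $x_{e_{k+1}}x_{e_{k+1}+e_i}$, on which $\psi$ is trivial for $n>k+1$. It is the last row $e_n-e_i$ that pairs with $x_{e_i}$, through the short simple root $e_n$. After that, rows $n-1,\dots,k+1$ go, in this order, against $e_i-e_n,\dots,e_i-e_{k+2}$. (The roots $e_i+e_j$, $i\le k<j$, are not missing: they are the $A$-block.)

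More seriously, for $k=2,3$ the group $U_{s,k}$ is not abelian. For $i\neq i'\le k$, $[x_{e_i},x_{e_{i'}}]$ is a nontrivial element of $x_{e_i+e_{i'}}$, and the same root groups arise as $[x_{e_i-e_j},x_{e_{i'}+e_j}]$ between your $C$- and $A$-blocks. These are root groups of the stabiliser $H$. They lie neither in $U_{n,k}$ nor in any of your $C$'s, so neither the short roots nor the $C$-block can be introduced by the exchange lemma.

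Nor can any exchange inside $\calB(\varphi)$ create them. View both sides as functions of $g$ by replacing $\varphi$ with its right translates. The right-hand side is left-invariant under $x_{e_i+e_{i'}}(\A)$, for three reasons:
\begin{itemize}
\item these groups lie in its domain of integration;
\item $\psi$ is trivial on them;
\item moving them past the adelic $B$-variable only produces elements of the $A$-block and of $N_{\SO_{2n+1-2k}}$ on which $\psi$ is trivial.
\end{itemize}
By contrast, $g\mapsto\calB(\varphi)(g)$ is in general invariant only under their rational points.

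What the bottom-up exchange actually proves, keeping $Z=\langle x_{e_i+e_{i'}}\rangle$ inside every $C$, is the identity for $\int_{[Z]}\calB(\varphi)(zg)\,dz$. In the paper's unfoldings this extra integration comes from the unipotent radical in $H$ produced by unfolding the Eisenstein series: $x_{e_1+e_2}=(n(r),I_2)$ in $\GSpin_4$, and the first row of $\GL_4$ in $\GSpin_6$. Your plan has to supply this ingredient.
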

\subsection{Root exchanges for Fourier-Jacobi coefficients}
\label{sec: root exch for Fourier-Jacobi}
As for the Fourier-Jacobi case, we recall the Fourier-Jacobi coefficients $\calFJ_{[2m,1^{k}]}(\varphi)(g,\Phi_k)$, $\calFJ_{[2^2,1^4]}(\varphi)(J_{D_4}(g_1,g_2);\Phi_4)$ and $\calFJ_{[4^2,1^2]}(g;\Phi_2)$ of cusp form $\varphi$ of $\GL_{2m+k}(\A)$, $\GSO_8(\A)$ and $\GSO_{10}(\A)$ given in Sections \ref{sec: Fourier-Jacobi coefficient for General Linear Groups} and \ref{sec: FJ coefficient of GSO}. We have a similar lemma.
\begin{lem}
    \label{lem: root exchanges Fourier-Jacobi} The term $\calFJ_{[2m,1^{k}]}(\varphi)(g;\Phi_k)$ given above can be reduced to 
    \begin{align*}
        \calFJ_{[2m,1^{k}]}(\varphi)(g;\Phi_k)&=\gamma_\psi^{-1}(\det g)
        \int_{[N_{\GL_{2m}}]}
        \int_{[\Mat_{k,2m-1}]}
        \int_{\Mat_{m-1,k}(\A)}
        \int_{\Mat_{1,k}(\A)}
        \Om_\psi(\iota_k(g)) \Phi_k(x)
        \psi_{[2m,1^{k}]}(v)
        \\
        \cdot&
        \varphi(
        \begin{pmatrix}
            I_k & \\ & v
        \end{pmatrix}
        \LL(\begin{smallmatrix}
            I_k&&A'\\
             & 1 & \\
            &&I_{2m-1}   
        \end{smallmatrix}\RR)
        \LL( \begin{smallmatrix}
            I_k&&\\
            B'&I_{m-1}&\\
            &&I_{m+1}
        \end{smallmatrix}\RR)
        \LL(\begin{smallmatrix}
            I_k&&&\\
            &I_{m-1}&&\\
            X&&1&\\
            &&&I_m  
        \end{smallmatrix}\RR)
        j_{2m+k,k}(g))
        \,
        dX\,dB'\,dA'\,dv.
    \end{align*}
    Similarly, the terms $\calFJ_{[2^2,1^4]}(\varphi)(J_{D_4}(g_1,g_2);\Phi_4)$ and  $\calFJ_{[4^2,1^2]}(\varphi)(g; \Phi_2)$ given above can be reduced to
    \begin{align*}
        \calFJ_{[2^2,1^4]}(\varphi)(J_{D_4}(g_1,g_2);\Phi_4)=&
        \int_{[\Mat_{2,2}]}
        \int_{k\sm \A}
        \int_{\Mat_{2,2}(\A)}\\
        &
        \varphi(
        u(0,Y;t) u(X,0;0) J_{D_4}(g_1,g_2))
        \Om_\psi(\iota_{D_4}(g_1,g_2))
        \Phi_4(x) \psi(t) \,dX\,dt\,dY,
    \end{align*}
    \begin{align*}
        \calFJ_{[4^2,1^2]}(\varphi)(g; \Phi_2)= \gamma_\psi&(\det g)^{-1}
    \int_{[V]} \int_{[W]} \int_{[Z]} \int_{[Y]} \int_{X(\A)}
    \\
    \cdot&
    \varphi(v w z y x J_{D_5}(g))\Om_\psi(\iota_2((\det g) g^\ast))\Phi_2(x)
    \psi_{[4^2,1^2]}(v w z)
    \, dx\,dy\,dz\,dw\,dv.
    \end{align*}
\end{lem}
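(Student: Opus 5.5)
The plan is the standard two-step argument used for Fourier--Jacobi coefficients in \cite[\textsection 2]{GRS} and \cite[\textsection 3]{GJRS}. First, expand the theta series and collapse the sum over $\xi$ against one Lagrangian piece of the Heisenberg integration. Second, perform root exchanges, trading the remaining compact integrations over certain root subgroups for adelic integrations over complementary root subgroups. The argument is written out for $\calFJ_{[2m,1^k]}$; the cases $\GSO_8$ and $\GSO_{10}$ are handled identically, with the roots in the polarisations $X\oplus Y$ of Section \ref{sec: FJ coefficient of GSO}.

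\textbf{Step 1 (unfolding the theta series).} By \eqref{eqn: formula Weil - Heisenberg}, we have $\Theta_\psi^{\Phi_k}((0,y,0)(x,0,0)\iota_k(g))=\sum_{\xi\in k^k}\psi(\xi w_k{}^t y)\,\Om_\psi(\iota_k(g))\Phi_k(x+\xi)$, up to the centre and the $\iota_k(g)$-conjugation, which is absorbed by changing variables in $x,y$. We substitute this into the definition. Left $\varphi$-invariance under the rational points of the $X$-root subgroup, whose element is the unipotent matrix with $X$ in the $(m+k)$-th row, lets us collapse $\sum_{\xi}\int_{[\Mat_{1,k}]}$ into $\int_{\Mat_{1,k}(\A)}$. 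Then $\psi(\xi w_k{}^ty)$ disappears after conjugating the $X$-element past the $Y$-element. The commutator of these two elements lies in the central $\G_a$ together with entries of $U_{2,2m+k}$, and it contributes exactly the character needed to cancel. We are left with an integral over $[Y]\times X(\A)$.

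\textbf{Step 2 (root exchange).} We now have the $[\Mat_{k,1}]$-integral over $Y$ and the $[\Mat_{m-1,k}]$-integral over $B$, together with adelic integration over $X$. We apply the root-exchange lemma \cite[Lemma 7.1]{GRS} repeatedly, one column at a time. Let $C$ be a unipotent group normalising the current domain of integration, on which $\psi$ is trivial, with $[C,D]$ lying in the current domain, and such that the pairing $(c,d)\mapsto\psi([c,d])$ is a nondegenerate pairing between $C$ and a subgroup $D$ in the domain. The lemma then says that integrating over $[D]$ equals integrating over $C(\A)$ once $D$ is added. Concretely, we exchange the $B$-block, the lower-left block of size $(m-1)\times k$, against the columns of $A$ located in rows $1,\dots,k$ and columns $k+m+2,\dots$. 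We also exchange the $Y$-column against $X$. This produces the block $A'\in\Mat_{k,2m-1}$, now with compact integration, and $B'\in\Mat_{m-1,k}(\A)$, now with adelic integration, as in the statement. Each step only requires checking the commutator relations between these root subgroups and checking that $\psi_{[2m,1^k]}$ is trivial on the exchanged groups. The ordering follows \cite[\textsection 3]{GJRS}: the rows of $B$ are exchanged from the bottom up, so that at each stage the group $C$ is normalised.

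\textbf{Main obstacle.} The delicate point is the bookkeeping in Step 2. We must verify that each exchanged group normalises the intermediate domain and that every commutator falls either into the trivial part of the character or into the integration domain. For $\GSO_{10}$ the root subgroups $V,W,Z,X,Y$ are given by explicit root lists, so these checks become root-sum calculations in $D_5$. Among them, we need $-\alpha_1-\alpha_2$ and $\alpha_1+\alpha_2+\alpha_3$ to pair into $\alpha_3$-type roots inside $V$, and one must verify that each such commutator lands in the support of $\psi_{[4^2,1^2]}$ only trivially. In that case Step 2 reduces to Step 1 alone, since the $[Y]$ integral stays compact, as in the statement. For $\GSO_8$ the exchange instead moves $X$ to the adelic integral, while $Y$ and $t$ remain compact.
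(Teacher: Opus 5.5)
Your Step 1 is the paper's argument: expand $\Theta_\psi^{\Phi}$ by \eqref{eqn: formula Weil - Heisenberg}, use left invariance of $\varphi$ under the rational $X$-elements, and let the commutator with the $Y$-element land in the Heisenberg centre, where it cancels $\psi(\xi w\,{}^ty)$. Then collapse $\sum_{\xi}\int_{[X]}$ into $\int_{X(\A)}$. This is the whole proof for $\GSO_8$ and $\GSO_{10}$. For $\GL_{2m+k}$ the paper follows it with the root exchange of Lemma~\ref{lem: root exchange Bessel}.

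Your Step~2, however, fails as written. The block $B$ (rows $k+1,\dots,k+m-1$, columns $1,\dots,k$) cannot be exchanged against columns $k+m+2,\dots,k+2m$. Those are the original $A$, already integrated over. Moreover $[E_{k+a,j},E_{j,k+m+1+c}]=E_{k+a,k+m+1+c}$ with $c\ge 1$ is the non-simple entry $v_{a,m+1+c}$, on which $\psi_{[2m,1^k]}$ is trivial, so there is no nondegenerate pairing. The correct partner is the group of rows $1,\dots,k$ and columns $k+2,\dots,k+m$, which is not in $U$. Row $k+a$ of $B$ pairs with column $k+a+1$ through the simple entry $v_{a,a+1}$, $1\le a\le m-1$, and the exchange is done one row at a time starting from $a=m-1$, so your bottom-up ordering is right. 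The lemma of \cite{GRS} then turns $[B]$ into $B'\in\Mat_{m-1,k}(\A)$ and adjoins these $m-1$ columns with compact integration. Together with the $Y$-column $k+m+1$ and the old $A$ they form $A'\in\Mat_{k,2m-1}$. Your paraphrase of the lemma has the roles reversed: the subgroup already in the domain becomes adelic, and the new one is added with compact integration. There is also no further ``exchange of the $Y$-column against $X$'': Step~1 has already made $X$ adelic, and $Y$ stays compact as a column of $A'$.

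Your remark on $\GSO_{10}$ is also false, and if it were true it would break Step~1. Both $[x_{-\alpha_1-\alpha_2},x_{\alpha_1+\alpha_2+\alpha_3}]$ and $[x_{-\alpha_1-\alpha_2-\alpha_3-\alpha_4-\alpha_5},x_{\alpha_1+\alpha_2+2\alpha_3+\alpha_4+\alpha_5}]$ land in $x_{\alpha_3}$, the coordinate $v_2$ of $V$. There $\psi_{[4^2,1^2]}=\psi(v_1+v_2+v_3)$ is nontrivial. This nontrivial pairing is the Heisenberg structure on $X\oplus Y$, and it is exactly what lets the commutator absorb $\psi(\xi w_2{}^ty)$ after a change of variables in $v_2$. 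No root exchange is needed for $\GSO_8$ and $\GSO_{10}$ for a simpler reason: the target formulas differ from the definitions only in replacing the $[X]$-integral (and $\Theta_\psi^{\Phi}$) by an $X(\A)$-integral (and $\Om_\psi\Phi(x)$), which Step~1 already achieves.
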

\begin{proof}
    The proof of identities for the Fourier-Jacobi coefficient $\calFJ_{[2^2,1^4]}(\varphi)$ and $\calFJ_{[4^2,1^2]}(\varphi)$ of cusp form $\varphi$ of $\GSO_8(\A)$ and $\GSO_{10}(\A)$ respectively are identical to that in \cite[\textsection 2]{GRS} and also \cite[\textsection 3]{GJRS}. For convenience of the reader we will provide a proof of the identity for the Fourier-Jacobi coefficient of $\GSO_8(\A)$. From the formula \eqref{eqn: formula Weil - Heisenberg}, we can rewrite the integral defining $\mathcal{FJ}_{[2^2,1^4]}(\varphi)(J_{D_4}(g_1,g_2);\Phi_4)$ as
    \begin{align*}
        \mathcal{FJ}_{[2^2,1^4]}(\varphi)(g_1,g_2;\Phi_2)=\int_{k\sm \A}
        \int_{[\Mat_{2,2}]}
        &\int_{[\Mat_{2,2}]}
    \varphi(u(t) u(Y) u(X) J_{D_4}(g_1,g_2))\\ 
    \cdot&\sum_{\xi \in X(k)} \Om_\psi
    (
    (0,y,t + \xi w_4 {}^t y)
    (x+\xi,0,0)
    \iota_{D_4}(g_1,g_2)
    )
    \Phi_4(0)\,dX\,dY\,dt,
    \end{align*}
    where $u(t) = u(0,0;t), u(Y) = u(0,Y;0)$ and $u(X) = u(X,0;0)$. Also, by using the left $U(k)$-invariance of the cusp form $\varphi$, we have $\varphi(u(t) u(Y) u(X) J_{D_4}(g_1,g_2))$ evaluating to 
    \begin{align*}
        \varphi(u(\xi) u(t) u(Y) u(X) J_{D_4}(g_1,g_2))
        =\varphi(u(t + \xi w_4 {}^t y) u(Y) u(X + \xi)J_{D_4}(g_1,g_2)),
    \end{align*}
    where $u(\xi) = u(\xi,0;0)$ for $\xi \in \Mat_{2,2}(k)$. Then, performing suitable changes of variables and collapsing with the summation over $X(k)$ with the integration over $\Mat_{2,2}(k) \sm \Mat_{2,2}(\A)$, we obtain the desired identity. As for the identity for the Fourier-Jacobi coefficient $\calFJ_{[2m,1^{k}]}(\varphi)(g,s,\chi;\Phi_k)$ of $\GL_{2m+k}(\A)$, it follows from the same argument as above followed by that in Lemma \ref{lem: root exchange Bessel}.
\end{proof}

\subsection{Auxiliary integrals}
\subsubsection{Auxiliary integrals involving cusp forms}
\label{sec: Aux integrals involving cusp forms}
In this subsection, we will introduce some auxiliary integrals involving cusp forms that will be used in the subsequent sections. Let $\chi_1,\chi_2: k^\tm \A^\tm \to \C^{\tm}$ be Hecke characters and $s \in \C$. Recalling the same notations in Sections \ref{sec: Bessel coefficient for General Linear Groups}, \ref{sec: Fourier-Jacobi coefficient for General Linear Groups} and \ref{sec: Bessel coefficient GSpin}, then for $n\geq2$ we write
\begin{align*}
    \calP_{[2n-3,1^4]}(\varphi_{\tau_n})(j_n(g_1,g_2),s,\chi_1,\chi_2;\Phi_2) =& \chi_1(\det g_1) \calB_{[2n-3,1^4]}(\varphi_{\tau_n})(j_n(g_1,g_2)) E_2(g_1,s,\chi_2;\Phi_2).
\end{align*}
Also for $k=2,4 \leq n$ and $g \in \GL_k(\A)$, we write
\begin{align*}
    \calP_{[n-k,1^k]}(\varphi_{\pi_n})(g,s,\chi_1,\chi_2;\Phi_k) = \begin{dcases*}
        \chi_1(\det g) \varphi_{\pi_n}(g) E_n(g,s,\chi_2;\Phi_n) & if $k=n$,\\
        \chi_1(\det g) |\det g|^{s-\frac{1}{2}} \calFJ_{[n-k,1^k]}(\varphi_{\pi_n})(g;\Phi_k) &if $k\neq n, n-k$ is even,\\
        \chi_1(\det g) |\det g|^{s-\frac{1}{2}} \calB_{[n-k,1^k]}(\varphi_{\pi_n})(g) &if $n-k$ is odd.
    \end{dcases*}
\end{align*}

\subsubsection{Auxiliary integrals involving Whittaker functions}
\label{sec: Aux integrals involving Whittaker functions}
In this subsection, we will introduce some auxiliary integrals involving Whittaker functions that will be used in the subsequent sections. Let $W_{\pi_n}W_{\tau_n}$ be a Whittaker function associated to cuspidal generic representation $\pi_n,\tau_n$ of $\GL_n(\A)$ and $\GSpin_{2n+1}(\A)$ respectively. For $n\geq2$ and $(g_1,g_2) \in G(\SL_2\tm\SL_2) \cong \GSpin_4$, we write
\begin{align*}
    \mathcal{U}_{[2n-3,1^4]}(W_{\tau_n})(j_n(g_1,g_2),s;\Phi_2) = |\det g_1|^s \Phi_2(e_2 g_1) \int_{\Mat_{n-2,2}(\A)}
    W_{\tau_n}(\LL(\begin{smallmatrix}
        I_2 &&&&\\
        B&I_{n-2}&&&\\
        &&1&&\\
        &&&I_{n-2}&\\
        &&&B'&I_2
    \end{smallmatrix}\RR) j_{n}(g_1,g_2))\,dB.
\end{align*}
Also for $n\geq3$ and $g \in \GSpin_{6}(\A)$, we write
\begin{align*}
    \mathcal{U}_{[2n-5,1^6]}(W_{\tau_n})
    (j_n(g)) = \int_{\Mat_{n-3,3}(\A)}
    W_{\tau_n}(\LL(\begin{smallmatrix}
        I_3 &&&&\\
        B&I_{n-3}&&&\\
        &&1&&\\
        &&&I_{n-3}&\\
        &&&B'&I_3
    \end{smallmatrix}\RR) j_{n}(g))\,dB.
\end{align*}
Whereas for $k=2,4 \leq n$ and $g \in \GL_k(\A)$ we write $\mathcal U_{[n-k,1^k]}(W_{\pi_n})(g,s;\Phi_k)$ as
\begin{itemize}
    \item If $k=n$, $\mathcal U_{[n-k,1^k]}(W_{\pi_n})(g,s;\Phi_k) = |\det g|^s \Phi_k(e_k g) W_{\pi_k}(g)$.
    \item If $k\neq n$ and $n-k$ is even,
    \begin{align*}
        \mathcal U_{[n-k,1^k]}(W_{\pi_n})(g,s;\Phi_k) = &\gamma_\psi^{-1}(\det g) |\det g|^{s-\frac{1}{2}}
        \int_{\Mat_{\frac{n-k-2}{2},k}(\A)} \int_{\Mat_{1,k}(\A)}\\
        \cdot&W_{\pi_n}(\LL(\begin{smallmatrix}
        I_k&&\\
        B'&I_{\frac{n-k-2}{2}}&\\
        &&I_{\frac{n-k+2}{2}}
    \end{smallmatrix}\RR)\LL(\begin{smallmatrix}
        I_k&&&\\
        &I_{\frac{n-k-2}{2}}&&\\
        X&&1&\\
        &&&I_\frac{n-k}{2}
    \end{smallmatrix}\RR) j_{n,k}(g)) \Om_\psi(\iota_k(g))\Phi_k(x)\,dX\,dB'.
    \end{align*}
    \item If $n-k$ is odd,
    \begin{align*}
        \mathcal U_{[n-k,1^k]}(W_{\pi_n})(g,s;\Phi_k) = & |\det g|^{s-\frac{1}{2}} \int_{\Mat_{\frac{n-k-1}{2},k}(\A)}
        W_{\pi_n}(\LL(\begin{smallmatrix}
            I_k&&\\
            B&I_{\frac{n-k-1}{2}}&\\
            &&I_{\frac{n-k+1}{2}}
        \end{smallmatrix}\RR)j_{n,k}(g))\,dB.
    \end{align*}
\end{itemize}

\subsection{Group actions and Double coset decomposition}
\label{sec: Group actions and Double coset decomposition}
In this subsection, we will discuss some group actions and double coset decomposition that are used in the unfolding of global period integrals defined in subsequent sections. To start, we study the following double coset decomposition $P_{12} \sm \GL_{12}/ (\GL_4' \otimes \GL_3)$ and $P_{11,1} \sm \GL_{12}/ (\GSp_4 \otimes \GL_3)$ where $P_{12}$ is the standard parabolic of $\GL_{12}$ of type $(11,1)$ given in \eqref{eqn: GLn parabolic subgroup} and
\begin{align}
\label{eqn: GL4'}
\GL_4' = \{g \in \GL_4 \mid \text{$\det g$ is a square}\}.
\end{align}
These double coset decompositions can be easily derived from the double coset decomposition \linebreak $P_{mn}\sm \GL_{mn}/ (\GL_m \otimes \GL_n)$ studied in \cite[\textsection 3]{Ha}, where the orbits can be characterised by the $\rank(X)$ for $0 \neq X \in \Mat_{m,n}$. Moreover, by elementary linear algebra one can characterise the orbits of the double coset decomposition $P_{11,1} \sm \GL_{12}/ (\GSp_4 \otimes \GL_3)$ by the set $\mathcal{R}_{4,3}$ given by
\begin{align*}
    \mathcal{R}_{4,3} = \{(\rank({}^t X j_4 X), \rank(X)) \mid 0 \neq X \in \Mat_{4,3} \}.
\end{align*}
We shall record this as a lemma.
\begin{lem}
    \label{lem: Double coset decomp}
    We have the following double coset decompositions 
    \begin{align*}
        &P_{12} \sm \GL_{12} / (\GL_4' \otimes \GL_3) = \bigsqcup_{r=1}^3 P_{12} \gamma_r (\GL_4' \otimes \GL_3),&&
        P_{12} \sm \GL_{12} / (\GSp_4 \otimes \GL_3) = \bigsqcup_{t=1}^4 P_{12} \omega_t (\GSp_4 \otimes \GL_3),
    \end{align*}
    where $\gamma_r$ and $\omega_t$ are elements of $\GL_{12}$ given by
    \begin{align*}
        \gamma_1 =&\omega_1= I_{12} - E_{1,1} - E_{12,12} + E_{1,12} + E_{12,1},\\
        \gamma_2 =&\omega_2= I_{12} - E_{1,1} - E_{12,12} + E_{1,12} + E_{12,1} + E_{12,5},\\
        \gamma_3 =&\omega_3=\begin{pmatrix}
        I_5 &&\\ &&I_6\\&1&
    \end{pmatrix}(I_{12} + E_{6,8} +E_{6,10}),\\
    \omega_4 =&I_{12} - E_{1,1} - E_{12,12} + E_{1,12} + E_{12,1} + E_{12,11}.
    \end{align*}
    Moreover, setting $(\GL_4' \otimes \GL_3)_{\gamma_r} = \gamma_r^{-1} P_{12} \gamma_r \cap  (\GL_4' \otimes \GL_3) , (\GSp_4 \otimes \GL_3)_{\omega_t} = \omega_t^{-1} P_{12} \omega_t \cap  (\GSp_4 \otimes \GL_3) $ we have
    \begin{align*}
        (\GL_4' \otimes \GL_3)_{\gamma_1} =& \LL\{(
        \begin{pmatrix}
            a & \\ c & D    
        \end{pmatrix}\otimes \begin{pmatrix}
            \alpha & 0 \\
            \gamma & \D
        \end{pmatrix}
        )\mid \begin{matrix}
            a,\alpha \in \GL_1, \D \in \GL_2, D \in \GL_3,\\
            c \in \Mat_{3,1}, \gamma \in \Mat_{2,1},\\
            \text{$a\det(D)$ is a square}
        \end{matrix} \RR\},\\
        (\GL_4' \otimes \GL_3)_{\gamma_2} =& \LL\{ 
            (
            \begin{pmatrix}
                A & \\ C & D
            \end{pmatrix}\otimes \begin{pmatrix}
                z{}^t A^{-1}&\\
                r & s
            \end{pmatrix}
            ) \mid \begin{matrix}
                z,s, \in \GL_1, A,D \in \GL_2,\\
                C \in \Mat_{2,2}, r \in \Mat_{1,2},\\
                \text{$\det(AD)$ is a square}
            \end{matrix}
        \RR\},\\
        (\GL_4' \otimes \GL_3)_{\gamma_3} =& \LL\{ 
            (\begin{pmatrix}
                a & B\\ & c_0 D
            \end{pmatrix}\otimes D^\ast) \mid \begin{matrix}
                a,c_0 \in \GL_1, D \in \GL_3,
                B \in \Mat_{1,3},
                \text{$a\det(c_0D)$ is a square}
            \end{matrix}
        \RR\},
    \end{align*}
    and
    \begin{align*}
        (\GSp_4 \otimes \GL_3)_{\omega_1}=&\LL\{(\begin{pmatrix}
            a & \\ c & D
        \end{pmatrix}\otimes \begin{pmatrix}
            \alpha & \\ 
            \gamma & \D
        \end{pmatrix}) \mid \begin{matrix}
            a, \alpha \in \GL_1, \D \in \GL_2,\\
            c \in \Mat_{3,1}, D \in \Mat_{3,3}, \gamma \in \Mat_{2,1},
        \end{matrix}\otimes \begin{pmatrix}
            a & \\ c & D
        \end{pmatrix} \in \GSp_4\RR\},\\
        (\GSp_4 \otimes \GL_3)_{\omega_2}=&\LL\{(\begin{pmatrix}
            A & \\ C & D
        \end{pmatrix}\otimes \begin{pmatrix}
            z {}^t A^{-1} & \\
            r & s
        \end{pmatrix})\mid \begin{matrix}
            z,s, \in \GL_1,A \in \GL_2\\
            C, D \in \Mat_{2,2}, r \in \Mat_{1,2},
        \end{matrix}\otimes
        \begin{pmatrix}
            A & \\ C & D
        \end{pmatrix} \in \GSp_4
        \RR\},\\
        (\GSp_4 \otimes \GL_3)_{\omega_3} =&\LL\{ (\begin{pmatrix}
            a & u\\
           0 & c_0A 
        \end{pmatrix}\otimes  A^\ast)\mid a,c_0 \in \GL_1, A \in \GL_3,\begin{pmatrix}
            a & u \\ 0 & c_0A
        \end{pmatrix}\in \GSp_4 \RR\},\\
        (\GSp_4 \otimes \GL_3)_{\omega_4}=&\LL\{ (\begin{pmatrix}
            a_{11}&&a_{12}\\
            &D &\\
            a_{21} && a_{22}
        \end{pmatrix}\otimes \begin{pmatrix}
            z{}^t A^{-1} & \\ 
            r & s
        \end{pmatrix}) \mid A = \begin{pmatrix}
            a_{11} & a_{12}\\a_{21} & a_{22}
        \end{pmatrix}, D \in \GL_2, \begin{matrix}
            z,s \in \GL_1, r \in \Mat_{1,2} \\
            \det A = \det D
        \end{matrix} \RR\}.
    \end{align*}
\end{lem}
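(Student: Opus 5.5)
The plan is to identify $P_{12}\sm \GL_{12}$ with the nonzero row vectors $k^{12}\sm\{0\}$ modulo scalars, via $g \mapsto e_{12} g$. This works because $P_{12}$ is the stabiliser of the line through $e_{12}$ acting on the left, up to the $\GL_1$ factor. Next identify $k^{12}$ with $\Mat_{4,3}$, matching the Kronecker product: $e_{12}(g\otimes h)$ corresponds to ${}^t g X h$ for the row-vector ordering induced by \eqref{eqn: Kronecker product map}. The double cosets are then the orbits of $(g,h)$ acting on nonzero $X \in \Mat_{4,3}$ by $X \mapsto \lambda\,{}^t g X h$, with $\lambda \in k^\tm$ coming from the Levi of $P_{12}$.

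For $\GL_4'\otimes\GL_3$, I follow \cite[\textsection 3]{Ha}: the orbits of $\GL_4\tm\GL_3$ on nonzero $X$ are classified by $r=\rank X\in\{1,2,3\}$. I then check that replacing $\GL_4$ by $\GL_4'$ does not split any orbit. The stabiliser of a rank-$r$ normal form $\begin{pmatrix} I_r & 0\\0&0\end{pmatrix}$ contains elements $(g,h)$ with $\det g$ taking every value in $k^\tm$, for instance by scaling a row of $g$ outside the image together with the compensating scalar $\lambda$ (or $h$ when $r=3$). So the square-class condition can always be absorbed.

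For $\GSp_4\otimes\GL_3$, the additional invariant is ${}^tX j_4 X$, which transforms by $\lambda^2\lambda(g)\,{}^t h(\,\cdot\,)h$. Its rank, which is $0$ or $2$ since the form is alternating, is therefore invariant. Witt's theorem for the symplectic space $k^4$ shows that subspaces $\mathrm{Im}X$ of given dimension and given radical dimension form one $\Sp_4$-orbit. Combining this with the $\GL_3$ action on a basis, one gets exactly the pairs $(0,1),(0,2),(2,2),(2,3)$, which are the four elements of $\mathcal{R}_{4,3}$; rank $3$ cannot have degenerate form $0$ since a Lagrangian in $k^4$ has dimension $2$. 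It then remains to check that $e_{12}\gamma_r$ and $e_{12}\omega_t$ are representatives of these orbits. For example, $e_{12}\omega_1=e_1$ corresponds to a rank-one matrix, while $e_{12}\omega_2=e_1+e_5$ and $e_{12}\omega_4=e_1+e_{11}$ give rank-two matrices whose images are respectively isotropic and nonisotropic for $j_4$. One reads these off directly.

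For the stabilisers, I compute $\gamma_r^{-1}P_{12}\gamma_r\cap(\GL_4'\otimes\GL_3)$ as the set of $(g,h)$ with ${}^tgX_rh\in k^\tm X_r$, where $X_r$ is the matrix of $e_{12}\gamma_r$. For $X_1=E_{1,1}$ this forces the first column of ${}^tg$ and the first row of $h$ to be supported appropriately, giving the block-triangular shapes listed (up to the transpose convention). For rank two, $ {}^tA\,I_2\,\Delta = z I_2$ produces the relation $\Delta = z\,{}^tA^{-1}$. For rank three, the image of $X_3$ combined with the permutation gives $D^\ast$ paired with $c_0D$. The symplectic stabilisers are obtained by intersecting with $\GSp_4$; for $\omega_4$, preserving a hyperbolic plane forces the block form $\begin{pmatrix} a_{11}&&a_{12}\\&D&\\a_{21}&&a_{22}\end{pmatrix}$ with $\det A=\det D$ from the similitude.

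The main obstacle is bookkeeping rather than ideas. I need to get the index convention of $e_{12}\leftrightarrow X$ under the Kronecker product right, so that the stated $\gamma_r,\omega_t$ (in particular $\gamma_3$ with its permutation) land in the claimed orbits, and so that the stabilisers come out in exactly the stated coordinates. I would first fix the convention by testing $\omega_2$ against $\omega_4$ on the isotropy criterion, then verify each stabiliser by direct matrix multiplication.
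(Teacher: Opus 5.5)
Your proposal is correct and follows the paper's route: the paper likewise identifies $P_{12}\backslash\GL_{12}$ with lines in $\Mat_{4,3}$, cites the rank classification of \cite[\textsection 3]{Ha} for $\GL_4'\otimes\GL_3$, and uses the invariant $(\rank({}^tXj_4X),\rank X)\in\mathcal{R}_{4,3}$ for $\GSp_4\otimes\GL_3$, so your Witt-theorem argument and explicit stabiliser checks fill in the ``elementary linear algebra'' the paper leaves implicit. One simplification: the $\GL_4'$ step needs no compensating scalar or use of $h$, because $\rank X\le 3$ means the normal form $\left(\begin{smallmatrix} I_r&0\\0&0\end{smallmatrix}\right)$ has zero fourth row and is therefore fixed by $(\diag(1,1,1,d),I_3)$ for every $d\in k^\times$.
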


Next, we will study two group actions defined on $\Mat_{1,n}$. Firstly, we consider the following subgroup $S(\GL_2 \tm \GL_2 \tm \GL_1)$ of $S(\GL_2 \tm \GL_2 \tm \GL_2)$ given by
\begin{align}
    \label{eqn: S(GL2 x GL2 x GL1)}
    S(\GL_2 \tm \GL_2 \tm \GL_1) =\LL\{(g_1,g_2, \diag(t,1)) \mid \begin{matrix}
        g_1,g_2 \in \GL_2, t_1 \in \GL_1,\\
        t\det(g_1g_2)= 1
    \end{matrix}\RR\},
\end{align}
and observe that 
\begin{align*}
    \rho(g_1,g_2,\diag(t_1,t_2)) = \begin{pmatrix}
    t_1(g_1\otimes g_2) &\\ & t_2 \kappa(g_1 \otimes g_2) \kappa^{-1}
\end{pmatrix},
\end{align*}
where $\kappa = \diag(1, -1,-1,1)$ in $\GL_4$. By the explicit formula \eqref{eqn: formula Weil - Levi Siegel} of the Weil representation, we will investigate the right group action of $\GL_2 \tm \GL_2 \cong \{(g_1,g_2, \det(g_1g_2)^{-1})\mid g_i \in \GL_2\}$ on $\Mat_{1,4}$ given by
\begin{align}
    \label{eqn: group action GL2 x GL2  x GL1}
    \Mat_{1,4} \tm \GL_2 \tm \GL_2; && (X, g_1,g_2) \mapsto X \cdot \frac{1}{\det(g_1 g_2)}(g_1\otimes g_2).
\end{align}
By elementary linear algebra, the orbits of this group action can be characterised by $\rank(X)$ for $X \in \Mat_{2,2}$. Similarly, consider the following subgroup $S'(\GSp_4 \tm \GL_3)$ of $S'(\GSp_4 \tm \GL_4)$ given by
\begin{align}
    \label{eqn: S'(GSp4 x GL3)}
    S'(\GSp_4 \tm \GL_3) = \LL\{ (g_1, \begin{pmatrix}
        g_2' & \\ & 1
    \end{pmatrix}) \in \GSp_4 \tm \GL_3 \mid \lambda(g_1) \det(g_2') = 1 \RR\},
\end{align}
and observe that
\begin{align*}
    \Ext^2(g_1, \begin{pmatrix}
        g_2' & \\ & 1
    \end{pmatrix}) = \begin{pmatrix}
        \Phi_1(g_1, g_2') & \\ & \Phi_2(g_1,g_2')
    \end{pmatrix} \in \Sp_{24}
\end{align*}
for $(g_1, \begin{pmatrix}
        g_2' & \\ & 1
    \end{pmatrix}) \in S'(\GSp_4 \tm \GL_3)$ where $\Phi_1,\Phi_2: S'(\GSp_4 \tm \GL_3) \to \GL_{12}$ are group homomorphisms given by
\begin{align*}
    \Phi_1(g_1,g_2') = \begin{pmatrix}
        I_{10} & \\ & -I_2
    \end{pmatrix} ( M(g_2', 2) \otimes g_1 ) \begin{pmatrix}
        I_{10} & \\ & -I_2
    \end{pmatrix},\\
    \Phi_2(g_1,g_2') = \begin{pmatrix}
        I_6 &&\\
        &-I_4 &\\
        &&I_2
    \end{pmatrix} (g_2' \otimes g_1) \begin{pmatrix}
        I_6 &&\\
        &-I_4 &\\
        &&I_2
    \end{pmatrix},
\end{align*}
where $M(g_2', 2) = (\det g_2') S {}^tg_2'^{-1} S$ for $S = \diag(1,-1,1)$.
Again, by the explicit formula \eqref{eqn: formula Weil - Levi Siegel} of the Weil representation, we will investigate the right group action of $S'(\GSp_4 \tm \GL_3)$ on $\Mat_{1,12}$ given by
\begin{align}
    \label{eqn: group action GSp4 x GL3}
    \Mat_{1,12} \tm S'(\GSp_4 \tm \GL_3) \to \Mat_{1,12}; && (X,g_1,g_2) \mapsto X \cdot \Phi_1(g_1,g_2).
\end{align}
Also, similar to the double coset decomposition of $P_{12}\sm \GL_{12}/ (\GSp_4 \otimes \GL_3)$ the orbits of this group action can be characterised by $\rank({}^t X j_4 X), \rank(X))$ for $X \in \Mat_{4,3}$. We shall record these as a lemma as well.
\begin{lem}
    \label{lem: group action}
    The group actions of $\GL_2 \tm \GL_2$ on $\Mat_{1,4}$ given in \eqref{eqn: group action GL2 x GL2  x GL1} has three orbits given by the representative $\eta_0 = (0,1,1,0)$, $\eta_1 = (1,0,0,0)$ and $\eta_2 = (0,0,0,0)$ and their stabiliser in $\GL_2 \tm \GL_2 \tm \GL_1$ are given by
    \begin{align*}
        \Stab_{\GL_2 \tm \GL_2}(\eta_0) =& \{(g,g^\ast, 1) \mid g \in \GL_2\},\\
        \Stab_{\GL_2 \tm \GL_2}(\eta_1) =& \{(\begin{pmatrix}
            a & \\ c & d    
        \end{pmatrix}, \begin{pmatrix}
            e & \\ g & d^{-1}
        \end{pmatrix}, a^{-1}e^{-1}) \mid a,d,e \in \GL_1, c,g \in \G_a\},\\
        \Stab_{\GL_2 \tm \GL_2}(\eta_2) =& \GL_2 \tm \GL_2.
    \end{align*}
    Similarly, the group action of $S'(\GSp_4 \tm \GL_3)$ on $\Mat_{1,12}$ given in \eqref{eqn: group action GSp4 x GL3} has five orbits given by the representatives $\xi_0,\dots, \xi_4$ where
    \begin{align*}
        \xi_0 = -e_4 + e_7 + e_{10}, && 
        \xi_1 = e_1, &&
        \xi_2 = e_1 + e_7,&&
        \xi_3 = e_1 + e_8,&&
        \xi_4 =0,
    \end{align*}
    for $e_i$ is the row-vector in $\Mat_{1,12}$ with one in the $i$-entry and zero otherwise. Furthermore, their stabilisers in $S'(\GSp_4 \tm \GL_3)$ are given by
    \begin{align*}
        \Stab_{S'(\GSp_4 \tm \GL_3)}(\xi_0) =&\LL\{ (\begin{pmatrix}
            \lambda & u \\ & A
        \end{pmatrix}^\ast, A) \in S'(\GSp_4 \tm \GL_3)\mid \lambda = \det(A), A =\LL(\begin{smallmatrix}
            a_{11} & a_{12} & a_{13}\\
            a_{21} & a_{22} & a_{23} \\
            &&1
        \end{smallmatrix}\RR),\begin{pmatrix}
            \lambda & u \\ & A
        \end{pmatrix} \in \GSp_4 \RR\}, \\
        \Stab_{S'(\GSp_4 \tm \GL_3)}(\xi_1) =&\LL\{
        (\begin{pNiceArray}{c|cc|c}
    a &  &  &\\
    \hline
    x_1&
  \Block{2-2}<\Large>{B} &\\
  x_2& & &\\
  \hline
  x_3 & x_2' & x_1' & a^{-1}\det B
\end{pNiceArray}, \begin{pmatrix}
    C & \\
    v & \det^{-1}(BC)
\end{pmatrix}) \in \GSp_4 \tm \GL_3: v \in \Mat_{2,1}
        \RR\}, \\
        \Stab_{S'(\GSp_4 \tm \GL_3)}(\xi_2) =&\LL\{ (\begin{pmatrix}
            A & \\ C & \lambda A^\ast
        \end{pmatrix}, \begin{pmatrix}
            \lambda \det^{-1}(A) & \\ v& \lambda^{-1} (\det A) {}^t A^{-1}
        \end{pmatrix}) \in \GSp_4 \tm \GL_3:\begin{matrix}
            \lambda \in \GL_1,\\
            A \in \GL_2,\\
            v \in \Mat_{2,1}
        \end{matrix} \RR\}, \\
        \Stab_{S'(\GSp_4 \tm \GL_3)}(\xi_3) =&\LL\{ (\begin{pmatrix}
            a_{11}&&a_{12}\\
            &B&\\
            a_{21} && a_{22}
        \end{pmatrix}, \begin{pmatrix}
            1 & \\ v& {}^t A^{-1}
        \end{pmatrix}) \in \GSp_4 \tm \GL_3: \begin{matrix}
            A = (a_{ij}), B \in \GL_2,\\
            v \in \Mat_{2,1},\\
            \lambda = \det(A) = \det(B)
        \end{matrix} \RR\}, \\
        \Stab_{S'(\GSp_4 \tm \GL_3)}(\xi_4) =& S'(\GSp_4 \tm \GL_3).
    \end{align*}
\end{lem}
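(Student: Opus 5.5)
The plan is to turn both actions into familiar actions on matrix spaces, classify orbits by rank-type invariants, and then read off stabilisers at explicit representatives.

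\emph{First action.} Identify a row vector $X=(x_1,x_2,x_3,x_4)\in\Mat_{1,4}$ with the $2\times2$ matrix $\widetilde X=\begin{pmatrix}x_1&x_2\\x_3&x_4\end{pmatrix}$. From the definition \eqref{eqn: Kronecker product map} of the Kronecker product,
\begin{align*}
\widetilde{X(g_1\otimes g_2)}={}^tg_1\widetilde X g_2 .
\end{align*}
So the action \eqref{eqn: group action GL2 x GL2  x GL1} becomes $\widetilde X\mapsto \det(g_1g_2)^{-1}\,{}^tg_1\widetilde Xg_2$. The scalar factor can be absorbed by rescaling $g_2$, so the orbits are exactly the rank strata $\rank\widetilde X\in\{2,1,0\}$. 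The representatives $\eta_0,\eta_1,\eta_2$ have ranks $2,1,0$ respectively.

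For the stabilisers:
\begin{enumerate}
\item[(a)] For $\eta_0$, we have $\widetilde\eta_0=w_2$. The stabiliser condition is ${}^tg_1w_2g_2=\det(g_1g_2)\,w_2$. Taking determinants forces $\det(g_1g_2)=1$, and then $g_2=w_2\,{}^tg_1^{-1}w_2=g_1^\ast$.
\item[(b)] For $\eta_1$, we have $\widetilde\eta_1=E_{1,1}$. The stabiliser condition forces lower-triangular shapes on $g_1,g_2$, with the diagonal relations recorded in the statement.
\item[(c)] For $\eta_2$, the stabiliser is the whole group.
\end{enumerate}

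\emph{Second action.} Identify $\Mat_{1,12}$ with $\Mat_{3,4}$, or equivalently with $\Mat_{4,3}$ after transposing, using the block structure of $M(g_2',2)\otimes g_1$. Since $M(g_2',2)=(\det g_2')S\,{}^tg_2'^{-1}S$, the action $X\mapsto X\Phi_1(g_1,g_2')$ becomes, up to the fixed sign matrices $S$ and $\diag(I_{10},-I_2)$,
\begin{align*}
X\mapsto (\det g_2')\,{}^tg_1\,X\,S\,{}^tg_2'^{-1}S ,
\end{align*}
with $X\in\Mat_{4,3}$. Here $g_1\in\GSp_4$ acts on the left and $\GL_3$ acts on the right.

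Two invariants survive: $\rank X$, and $\rank({}^tXj_4X)$. The second is the rank of the symplectic form restricted to the column space of $X$; it is preserved because $\GSp_4$ scales $j_4$ and $\GL_3$ acts by congruence. An alternating $3\times3$ matrix has rank $0$ or $2$. A Lagrangian in dimension $4$ is $2$-dimensional, so a $3$-dimensional subspace is never isotropic. The possible pairs are therefore $(0,0),(0,1),(0,2),(2,2),(2,3)$, which gives five orbits.

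To show these invariants are complete, I would proceed in three steps.
\begin{enumerate}
\item Given two $X$ with the same invariants, use $\GL_3$ on the right to bring each to a normal form: a basis of the column space followed by zero columns.
\item Use Witt's theorem for $\Sp_4$ to move one column space to the other, matching chosen bases.
\item Handle the determinant constraint $\lambda(g_1)\det g_2'=1$ separately.
\end{enumerate}

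I expect step 3 to be the main obstacle. The plan is to show that every pair $(\lambda,\det)$ is reachable inside each stabiliser-compatible family, so the constraint does not split orbits. To do this I would compose with central or torus elements, such as $(\diag(t,t,1,1)$-type similitudes in $\GSp_4$ paired with scalars in $\GL_3)$, which move $\lambda(g_1)\det g_2'$ freely while fixing the normal form. I would then check directly that $\xi_0,\dots,\xi_4$ realise the five invariant pairs, after translating the $e_i$ back through the sign twists.

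\emph{Stabilisers.} The stabilisers are computed by writing the fixed-point equation in block form at each representative. For $\xi_0$, the column space is $3$-dimensional, so $\GL_3$ must be determined by $g_1$ restricted to it, which gives the $(\lambda,u;A)$ parabolic form. For $\xi_1$, $\xi_2$ and $\xi_3$, the stabilisers are respectively the stabiliser of a line, of a Lagrangian plane, and of a nondegenerate plane in $\GSp_4$, each paired with the compatible $\GL_3$ block. The main labour is tracking the sign matrices $S$, $\eta$ and $\gamma_{24}\eta_{24}$ so that the stated shapes come out exactly; this bookkeeping is routine but error-prone.
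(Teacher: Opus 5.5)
Your orbit classification is the paper's own argument: the paper only asserts, ``by elementary linear algebra'', that the orbits are separated by $\rank X$, resp.\ by $(\rank({}^tXj_4X),\rank X)$, and it gives no derivation of the stabilisers. That part of your proposal goes through. The $\GL_2\times\GL_2$ part, including all three stabilisers, is correct, and $\xi_0,\dots,\xi_4$ realise the pairs $(2,3),(0,1),(0,2),(2,2),(0,0)$.

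The step you call the main obstacle is harmless. The element $(d^{-2}I_4,dI_3)$ acts trivially on $\Mat_{1,12}$, since $M(dI_3,2)=d^2I_3$, and it has $\lambda(g_1)\det g_2'=d^{-1}$. Hence $S'(\GSp_4\times\GL_3)$ has the same image as $\GSp_4\times\GL_3$. Also, $\det M(g_2',2)=(\det g_2')^2$ is always a square, but the scalars $cI_4\in\GSp_4$ supply the missing square classes, so the right action really is by all of $\GL_3$.

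The gap is in the stabiliser part, where you expect sign bookkeeping to make the displayed shapes ``come out exactly''. That cannot happen, for two reasons.

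First, the identity $M(g_2',2)=(\det g_2')S\,{}^tg_2'^{-1}S$ that you imported is not the matrix of $2\times 2$ minors. For $g_2'=\diag(p,q,r)$ the minors give $\diag(pq,pr,qr)$, while the formula gives $\diag(qr,pr,pq)$. The correct identity is $M(g_2',2)=(\det g_2')S\,g_2'^{\ast}S$. This is invisible for orbits, but with your version even the displayed $\mathrm{Stab}(\xi_0)$ fails; take $A=I_3+tE_{1,2}$, $u=0$.

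Second, even with the correct minors, two displayed stabilisers are wrong as written. Let $\widetilde X\in\Mat_{3,4}$ have the three length-$4$ blocks of $X$ as rows. At $\xi_1,\xi_2$ the fixed-point equation is then ${}^tM\,\widetilde X\,g_1=\widetilde X$ with $M=M(g_2',2)$.
\begin{enumerate}
\item[(i)] For $\xi_1=e_1$, the equation says $M_{12}=M_{13}=0$ and the first row of $g_1$ equals $(M_{11}^{-1},0,0,0)$. The former forces $g_2'=\begin{pmatrix} C&0\\ v&c\end{pmatrix}$, whence $M_{11}=\det C$ and $a\det C=1$. This relation is missing from the display: $(\diag(2,1,1,\tfrac12),I_3)$ lies in the displayed set but maps $e_1$ to $2e_1$.
\item[(ii)] For $\xi_2=e_1+e_7$, the rows of $\widetilde X$ are $\varepsilon_1,\varepsilon_3,0$, where $\varepsilon_i$ are the unit rows of $\Mat_{1,4}$. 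So rows $1$ and $3$ of $g_1$ must span $\langle\varepsilon_1,\varepsilon_3\rangle$, i.e.\ ${}^tg_1$ stabilises the Lagrangian $\langle e_1,e_3\rangle$. The displayed lower Siegel-type group stabilises $\langle e_1,e_2\rangle$ instead. It is exactly the stabiliser of $e_1+e_6$, another point of the same orbit.
\end{enumerate}

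The displayed $\mathrm{Stab}(\xi_0)$ and $\mathrm{Stab}(\xi_3)$ do check out. For $\xi_0$, however, your description (``$g_2'$ is determined by $g_1$ on the column space'') yields the whole $8$-dimensional Klingen parabolic. It is the constraint $\lambda(g_1)\det g_2'=1$, combined with the scalar $\det g_2'$ inside $M(g_2',2)$, that cuts this to the $7$-dimensional group with last row of $A$ equal to $(0,0,1)$.

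So a complete argument has to correct the statement rather than just verify it: add $a=\det(C)^{-1}$ for $\xi_1$, and either replace $\xi_2$ by $e_1+e_6$ or conjugate its stabiliser.
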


\section{The global period integrals and their unfolding process}
\label{sec: The Period integrals and their unfolding process}
In this section, we will construct the period integrals which represent the $L$-function for the multiplicity free representation given in Table \ref{Table: Multiplicity-free repn}, and we will perform the unfolding process for these period integrals and show that each of them unfolds to the Whittaker model. For this and future sections, given a BZSV quadruple $\calD = (G,H,\iota,\rho_H)$ we will write $[H] = Z_{G,H}(\A) H(k) \sm H(\A)$ where $Z_{G,H}$ is the connected component of the intersection $Z_G \cap H$. Here $Z_G$ is the center of $G$, and $H$ is realised as a subgroup of $G$ via the maps and embeddings introduced in Section \ref{subsec: Embeddings and Maps}.

\subsection{Period Integrals on General Linear Groups $\times$ Spin Similitude Groups}
\label{sec: Period Integrals on General Linear Groups and Spin Similitude Groups}
In this subsection, we will construct the period integrals for $\GSpin_{2m+1} \tm \GL_2$ for $m\geq3$, $\GSp_4 \tm \GL_n$ for $n\geq4$ and $\GSpin_{2m+1} \tm \GL_3$ for $m\geq 2$. In this cases, the corresponding BZSV quadruples are
\begin{align*}
    \calD_{m,2} =& \begin{dcases*}
        (\GSpin_{2m+1}\tm\GL_2, \GSpin_4, T(\std_2) \oplus T(\std_2), [2m-3,1^4]), & if $m\geq 3$,
    \end{dcases*}\\
    \calD_{2,n} =& \begin{dcases*}
        (\GSp_4 \tm \GL_4, S'(\GSp_4 \tm \GL_4), \std_{\GSp_4} \otimes \wedge^2_{\GL_4} \oplus T(\std_{\GL_4}),1),& if $n=4$,\\
    (\GSp_4 \tm \GL_n, 
    S'(\GSp_4 \tm \GL_4), 
    \std_4 \otimes \wedge^2_{\GL_4},
    (1,[n-4,1^4])),& if $n\geq 5$,
    \end{dcases*}\\
    \calD_{m,3} =& \begin{dcases*}
        (\GSp_4 \tm \GL_3, \GSp_4 \tm \GL_3, T(\std_4 \otimes \std_3),1),& if $m=2$,\\
    (\GSpin_{2m+1} \tm \GL_3, \GSpin_6 \tm \GL_3, T(\HSpin_6 \otimes \std_3), ([2m-5,1^6],1)),& if $m\geq3$.
    \end{dcases*}
\end{align*}
With these, we define the $\GSpin_{2a+1}\tm \GL_b$-period integral $\calP_{\calD_{a,b}}^{\GSpin\tm\GL}(\varphi,s;\Phi)$ as
\begin{align*}
    \calP_{\calD_{m,2}}^{\GSpin\tm\GL}(\varphi,s;\Phi) =&
    \int_{[G(\SL_2 \tm \SL_2)]}
    \calP_{[2m-3,1^4]}(\varphi_{\tau_m})(j_m(g_1,g_2),s,\mathbf{1},
    \omega_{\tau_m}\omega_{\pi_2};\Phi_2)
    \varphi_{\pi_2}(g_2)\,dg_1\,dg_2,\\
    \calP_{\calD_{2,n}}^{\GSpin\tm\GL}(\varphi,s;\Phi)=&
    \int_{[S'(\GSp_4 \tm \GL_4)]}
    \varphi_{\tau_2}(g_1^\ast)
    \calP_{[n-4,1^4]}(g_2,s,\mathbf{1},\omega_{\tau_2}^2 \omega_{\pi_n};\Phi_4) \Theta_\psi^{\Phi_{12}}(g_1 \otimes \wedge^2 g_2)\,dg_1\,dg_2,\\
    \calP_{\calD_{2,3}}^{\GSpin\tm\GL}(\varphi,s;\Phi)=&
    \int_{[\GSp_4 \tm \GL_3]}
    \varphi_{\tau_2}(g_1) \varphi_{\pi_3}(g_2^\ast)
    E_{12}\LL( g_1 \otimes g_2, \frac{2s+1}{4}, \omega_{\tau_2};\Phi_{12}\RR)\,dg_1\,dg_2,\\
    \calP_{\calD_{m,3}}^{\GSpin\tm\GL}(\varphi,s;\Phi)=&
    \int_{[\GSpin_6 \tm \GL_3]}
    \calB_{[2m-5,1^6]}(\varphi_{\tau_m})(j_m(g_1))
    \varphi_{\pi_3}(g_2^\ast)
    E_{12}\LL( \operatorname{pr}(g_1) \otimes g_2, 
    \frac{2s+1}{4}, \omega_{\tau_m};\Phi_{12}\RR)dg_i.
\end{align*}
for cusp forms $\varphi_{\tau_a} \otimes \varphi_{\pi_b} \in \tau_a \otimes \pi_b$, where in the case $(a,b) = (2,n)$, $\Theta_\psi^{\Phi_{12}}$ is the $\widetilde \Sp_{24}$-theta series pulled back to $S'(\GSp_4 \tm \GL_4)(\A)$ via the $\Ext^2$ map \ref{eqn: Ext2 map}. Also for $(a,b) =(m,3)$ where $m\geq2$, we are assuming $\omega_{\tau_m} \omega_{\pi_3} = \mathbf1$.
\subsection{Multi-Variable Period Integrals on General Linear Groups and Spin Similitude Groups}
\label{sec: Multi-Variable Period Integrals}
In this subsection, we will construct the multi-variable period integrals $\calP_{\calD,n}^{G}(\varphi,s,w,\chi,\mu;\Phi)$. Let $\chi,\mu: k^\tm \sm \A^\tm \to \C^{\tm}$ be Hecke characters. Here, the corresponding BZSV quadruples are
\begin{align*}
    \calD^{\GL}_n =& \begin{dcases*}
        (\GL_2, \GL_2, T(\std_2) \oplus T(\std_2),1)&if $n=2$,\\
    (\GL_n, \GL_2, T(\std_2), [n-2,1^2]),&if $n\geq3$,
    \end{dcases*}\\
    \calD^{\GSpin}_n =& \begin{dcases*}
            (\GSpin_{2n+1}, \GSpin_4, T(\std_2) \oplus T(\std_2), [2n-3,1^4]), &if $n\geq2$.
    \end{dcases*}
\end{align*}
With these, we define the multivariable period integral $\calP_{\calD,n}^{G}=\calP_{\calD,n}^{G}(\varphi,s,w,\chi,\mu;\Phi)$ as
\begin{align*}
    \calP_{\calD,n}^{\GL}=&
    \int_{[\GL_2]}
    \calP_{[n-2,1^2]}\LL(\varphi_{\pi_n},g,\frac{s+w}{2}, \chi, \omega_{\pi_n}\mu \chi;\Phi_2'\RR)
    E_2\LL(g, \frac{w-s+1}{2}, \chi\mu^{-1};\Phi_2\RR)\,dg,\\
    \calP_{\calD,n}^{\GSpin}=&
    \int_{[G(\SL_2 \tm \SL_2)]}
    \calP_{[2n-3,1^4]}(\varphi_{\tau_n})\LL(j_n(g_1,g_2), \frac{s+w}{2}, \chi, \omega_{\tau_n}\mu\chi;\Phi_2'\RR)E_2\LL(g, \frac{w-s+1}{2}, \chi\mu^{-1};\Phi_2\RR)\,dg_i.
\end{align*}
for cusp forms $\varphi_{\pi_n} \in \pi_n$ and $\varphi_{\tau_n} \in \tau_n$.

\subsection{Period Integrals on Special Orthogonal Similitude Groups}
\label{sec: Period Integrals on Special Orthogonal Similitude Groups}
In this subsection, we will construct the period integrals on $\GSO_{8}$ and $\GSO_{10}$. Here, the BZSV quadruples are
\begin{align*}
    \calD_{D_4} =& (\PGSO_8, S(\GL_2 \tm \GSO_4), T(\std_2) \oplus T(\std_2), [2^2,1^4]),&&
    \calD_{D_5} = (\GSO_{10}, \GL_2, 0, [4^2,1^2]).
\end{align*}
With these, we define the period integrals $\calP_{\calD,D_4}(\varphi,s,w;\Phi)$ and $\calP_{\calD,D_5}(\varphi,s;\Phi)$ as
\begin{align*}
    \calP_{\calD,D_4}(\varphi,s;\Phi)=&
    \int_{[S(\GL_2 \tm \GSO_4)]} 
    \mathcal{FJ}_{[2^2,1^4]}(\varphi_{\sigma_4})(J_{D_4}(g_1,g_2);\Phi_4)
    E_2(g_1,s;\Phi_2;\mathbf{1})
    E_P^\ast(g_2,f_w)
    \,dg_1\,dg_2,\\
    \calP_{{\mathcal D},D_5}(\varphi,s; \Phi) =& 
    \int_{[\GL_2]} |\det g|^{s-1/2} \mathcal{FJ}_{[4^2,1^2]}(\varphi)(J_{D_5}(g);\Phi_2)\,dg,
\end{align*}
for cusp forms $\varphi_{\sigma_4} \in \sigma_4$ and $\varphi_{\sigma_5} \in \sigma_5$  where we are assuming $\omega_{\sigma_4} = \mathbf{1}$.

\subsection{Glued Period Integrals on General Linear Groups and Spin Similitude Groups}
\label{sec: Glued Period Integrals on General Linear Groups and Spin Similitude Groups}
In this sub-section, we will construct the glued multi-variable period integrals for $\underline{\GL}_2 \tm G$ and $G_1 \tm \underline{\GL}_2 \tm G_2$ where $G_1$ and $G_2$ are either $\GL_n$ or $\GSpin_{2n+1}$ for $n\geq 2$. Here, the BZSV quadruples are
\begin{align*}
    &\calD_{m,n}^{\GL,\GL}= \begin{dcases*}
        (\GL_2 \tm \underline{\GL}_2 \tm \GL_2,
        S(\GL_2^3), 
        T(\std_2) \oplus T(\std_2) \oplus \std_2^{\otimes3},
        1), &if $m=n=2$,\\
        (\GL_2 \tm \underline{\GL}_2 \tm \GL_n,
        S(\GL_2^3),
        T(\std_2) \oplus \std_2^{\otimes 3},
        (1,1,[n-2,1^2]) &if $m=2,n\geq3$,\\
        (\GL_m \tm \underline{\GL}_2 \tm \GL_n,
        S(\GL_2^3),
        \std_2^{\otimes 3},
        ([m-2,1^2],1,[n-2,1^2])) & if $m,n\geq3$,
    \end{dcases*}\\
    &\calD_{m,n}^{\GL,\GSpin}=\begin{dcases*}
        (\GL_2 \tm \underline{\GL}_2 \tm \GSpin_{2n+1},
        S''(\GL_2^4),
        T(\std_2) \oplus T(\std_2) \oplus
        \std_2^{\otimes3},
        (1,1,[2n-3,1^4])) &if $m=2$\\
        (\GL_m \tm \underline{\GL}_2 \tm \GSpin_{2n+1},
        S''(\GL_2^4),
        T(\std_2) \oplus \std_2^{\otimes3},
        ([m-2,1^2],1,[2n-3,1^4]))
        &if $m\geq3$,
        \end{dcases*}
\end{align*}
and $\calD_{m,n}^{\GSpin,\GSpin}$ is 
\begin{align*}
    (\GSpin_{2m+1} \tm \underline{\GL}_2 \tm \GSpin_{2n+1},
    S^\ast(\GL_2^5),
    T(\std_2) \oplus T(\std_2) \oplus
    \std_2^{\otimes 3},
    ([2m-3,1^4],1,[2n-3,1^4])).
\end{align*}
With these, we define the glued $G_1 \tm \underline{\GL}_2 \tm G_2$-period integral $\calP_{m,n}^{G_1,G_2}(\varphi,s,w;\Phi)$ as
\begin{align*}
    \calP_{\calD,m,n}^{\GL,\GL}(\varphi,s,w;\Phi) = \int_{[S(\GL_2^{\tm3})]} 
        &\calP_{[m-2,1^2]}(\varphi_{\pi_m})(g_1,w,\mathbf{1}, \omega_{\pi_m}\omega_{\pi_2'};\Phi_2)
        \\
        \cdot&
        \varphi_{\pi_2'}(g_2^\ast)
        \calP_{[n-2,1^2]}(\varphi_{\pi_n''})(g_3,s,\mathbf{1},\omega_{\pi_2'}\omega_{\pi_n''};\Phi_2')
        \Theta_\psi^{\Phi_4}(\rho(g_1,g_2,g_3))\,dg_i,\\
    \calP_{\calD,m,n}^{\GL,\GSpin}(\varphi,s,w;\Phi) = \int_{[S''(\GL_2^{\tm4})]}
    &\calP_{[m-2,1^2]}(\varphi_{\pi_m})(g_1,w,\mathbf{1};\omega_{\pi_m}\omega_{\pi_2'};\Phi_2)
    \varphi_{\pi_2'}(g_2^\ast)\\
    \cdot&\calP_{[2n-3,1^4]}(\varphi_{\tau_n})(j_n(g_4,g_3),s,\mathbf{1},\omega_{\pi_2'}\omega_{\tau_n};\Phi_2')
    \Theta_{\psi}^{\Phi_4}(\rho(g_2,g_3,g_1))\,dg_i,\\
    \calP_{\calD,m,n}^{\GL,\GSpin}(\varphi,s,w;\Phi) = \int_{[S^\ast(\GL_2^{\tm5})]}
    &\calP_{[2m-3,1^4]}(\varphi_{\tau_m})(j_m(g_1,g_2),w,\mathbf{1},\omega_{\tau_m}\omega_{\pi_2'};\Phi_2) \varphi_{\pi_2'}(g_3^\ast)\\
    \cdot&\calP_{[2n-3,1^4]}(\varphi_{\tau_n'})(j_n(g_4,g_5),s,\mathbf{1},\omega_{\pi_2'}\omega_{\tau_n'};\Phi_2')
    \Theta_\psi^{\Phi_4}(\rho(g_2,g_3,g_5))\,dg_i,
\end{align*}
for cusp forms $\varphi_{\pi_n} \in \pi_n$ (resp. $\varphi_{\pi_n'}\in \pi_n', \varphi_{\pi_n''} \in \pi_n''$) and $\varphi_{\tau_n} \in \tau_n$ (resp. $\varphi_{\tau_n'} \in \tau_n')$. Here $\Theta_\psi^{\Phi_4}$ is the $\widetilde \Sp_8$-theta series pulled back to $S'(\GL_2^{\tm3})(\A)$ via the map $\rho$ \eqref{eqn: rho map}.

\subsection{Unfolding Process}
In this sub-section, we will perform the unfolding process of the period integrals $\calP_{\calD}(\varphi,s;\Phi)$ defined in Sections \ref{sec: Period Integrals on General Linear Groups and Spin Similitude Groups}-\ref{sec: Glued Period Integrals on General Linear Groups and Spin Similitude Groups} and show that they unfold to the Whittaker model. More precisely, we have the following theorem.
\begin{thm}
    \label{thm: Unfolding of period integral and absolute conv}
    The  the period integrals $\calP_{\calD}(\varphi,s;\Phi)$ defined in Sections \ref{sec: Period Integrals on General Linear Groups and Spin Similitude Groups}-\ref{sec: Glued Period Integrals on General Linear Groups and Spin Similitude Groups} convergent absolutely for all $s,w \in \C$, away from the poles of the Eisenstein series. Moreover, for sufficiently large $\operatorname{Re}(s), \operatorname{Re}(w) \gg0$ (resp. $\operatorname{Re}(w)\gg \operatorname{Re}(s)  \gg0$) the period integrals defined in \ref{sec: Period Integrals on General Linear Groups and Spin Similitude Groups}, \ref{sec: Period Integrals on Special Orthogonal Similitude Groups} and \ref{sec: Glued Period Integrals on General Linear Groups and Spin Similitude Groups} (resp. the period integrals defined in Section \ref{sec: Multi-Variable Period Integrals}) unfolds to the Whittaker model.

    The $\GSpin_{2a+1} \tm \GL_b$-period integrals $\calP_{a,b}^{\GSpin\tm\GL}=\calP_{a,b}^{\GSpin\tm\GL}(\varphi,s;\Phi)$ in Section \ref{sec: Period Integrals on General Linear Groups and Spin Similitude Groups} unfolds to 
    \begin{align*}
    \calP_{{\mathcal D},m,2}^{\GSpin\tm\GL}=& 
        \int_{U'_{G(\SL_2\tm\SL_2)}(\A)\sm         G(\SL_2\tm\SL_2)(\A)}\mathcal{U}_{[2m-3,1^4]}(W_{\tau_{m}}^{\bar \psi})(j_m(g_1,g_2),s;\Phi_2)
        W_{\pi_{2}}^{\psi}(g_2)dg_i,\\
    \calP_{{\mathcal D},2,n}^{\GSpin\tm\GL}=&
        \int_{U_{\GSp_4\tm\GL_4}'(\A) \sm S'(\GSp_4 \tm \GL_4)(\A)}
        W_{\tau_2}^{\psi'}(g_1^\ast) 
         \mathcal{U}_{[n-4,1^4]}(W_{\pi_n}^{\bar \psi})(g_2,s;\Phi_4) 
         \omega_\psi(g_1 \otimes \wedge^2 g_2) \Phi_{12}(\xi_0)dg_i,\\
     \calP_{{\mathcal D},2,3}^{\GSpin\tm\GL}=&
        \int_{Z'(\A)U_{\GSp_4\tm\GL_3}'(\A)\sm (\GSp_4 \tm \GL_3)(\A)}
        W_{\tau_2}^{\psi'}(g_1) W_{\pi_3}^{\bar \psi}(g_2^\ast)
        \Phi_{12}(e_{12}\gamma(g_1\otimes g_2))\,dg_i,\\
    \calP_{{\mathcal D},m,3}^{\GSpin\tm\GL}=& 
        \int_{Z'(\A) U'_{\GL_4 \tm \GL_3}(\A) \sm (\GSpin_6 \tm \GL_3)(\A)}
        \mathcal{U}_{[2m-5,1^6]}(W_{\tau_m}^{\psi})(j_m(g_1))
        W_{\pi_3}^{\bar\psi}(g_2^\ast)
        \Phi_{12}(e_{12} \gamma (\operatorname{pr}(g_1) \otimes g_2))dg_i.
    \end{align*}
    Here, $\xi_0$ is the representative in $\Mat_{1,12}$ given in Lemma \ref{lem: group action} and $\gamma = \gamma_3 = \omega_3$ is the matrix representative in Lemma \ref{lem: Double coset decomp}. Also, $U_{G}$ is the unipotent subgroup of $G$ given by
    \begin{enumerate}
        \item [(i)] $U_{G(\SL_2 \tm \SL_2)}'$ is the maximal unipotent subgroup of $G(\SL_2 \tm \SL_2)$ consisting of upper triangular matrices.

        \item [(ii)] $U'_{\GSp_4 \tm \GL_4}$ is the unipotent subgroup of $\GSp_4 \tm \GL_4$ with the parametrisation
        \begin{align*}
            U'_{\GSp_4 \tm \GL_4} = \LL\{(\LL(\begin{smallmatrix}
            1&-x_1 & x_2' & x_4\\ 
            &1&x_3 & x_2\\
            &&1&x_1\\
            &&&1    
        \end{smallmatrix}\RR)^\ast, \LL(\begin{smallmatrix}
            1 & x_3 & x_2& z_1\\
            &1&x_1&z_2\\
            &&1&z_3\\
            &&&1
        \end{smallmatrix}\RR)) \in \GSp_4 \tm \GL_4  \RR\}.
        \end{align*}

        \item [(iii)] $U'_{\GSp_4 \tm \GL_3}$ is the unipotent subgroup of $\GSp_4 \tm \GL_3$ with the parametrisation
        \begin{align*}
            U'_{\GSp_4 \tm \GL_3}=\LL\{ (
                \LL(\begin{smallmatrix}
                    1&-x_1&x_2'&x_4\\
                    &1&x_3&x_2\\
                    &&1&x_1\\
                    &&&1
                \end{smallmatrix}\RR), \LL(\begin{smallmatrix}
                    1&x_3&x_2\\&1&x_1\\&&1
                \end{smallmatrix}\RR)^\ast
                ) \in \GSp_4\tm \GL_3\RR\}.
        \end{align*}
        \item [(iv)] $U'_{\GL_4 \tm \GL_3}$ is the unipotent subgroup of $\GL_4 \tm \GL_3$ with the parametrisation
        \begin{align*}
            U'_{\GL_4 \tm \GL_3}=\LL\{(\LL(\begin{smallmatrix}
                1&y_1 & y_2 & y_4\\ 
                &1&x_3 & x_2\\
                &&1&x_1\\
                &&&1    
            \end{smallmatrix}\RR), \LL(\begin{smallmatrix}
                1 & x_3 & x_2\\ &1&x_1\\&&1
            \end{smallmatrix}\RR)^\ast) \in \GL_4' \tm \GL_3 \RR\}.
        \end{align*}
    \end{enumerate}
    Furthermore, $Z'$ is the following subgroup of the center of $\GSp_4 \tm \GL_3$ and $\GSpin_6 \tm \GL_3$ with the parametrisation
    \begin{align*}
        Z' = \begin{dcases*}
            \{(tI_4, t^{-1}I_3)\} &$\subset \GSp_4 \tm \GL_3$,\\
            \{e^\ast_0(t), t^{-1}I_3)\} &$\subset \GSpin_6 \tm \GL_3$.
        \end{dcases*}
    \end{align*}
    Next, the multivariable period integrals $\calP_{\calD,n}^{G} = \calP_{\calD,n}^{G}(\varphi,s,w,\chi,\mu;\Phi)$ defined in Section \ref{sec: Multi-Variable Period Integrals} unfold to 
    \begin{align*}
       \calP_{\calD,n}^{\GL}&= \int_{\GL_2(\A)}
            \chi(\det g)
            \mathcal{U}_{[n-2,1^2]}(W_{\pi_n}^{\bar\psi})
                \LL(g, \frac{s+w}{2};\Phi_2'\RR)
                F_2\LL(w_2g, \frac{w-s+1}{2},\chi\mu^{-1};\Phi_2\RR)\,dg,\\
        \calP_{\calD,n}^{\GSpin} &=\int \chi(\det g_1)
         \mathcal U_{[2n-3,1^4]}(W_{\tau_n}^{\bar\psi})\LL(j_n(g_1,g_2), \frac{s+w}{2};\Phi_2' \RR)
         F_2\LL(w_2 g_2, \frac{w-s+1}{2}, 
         \chi \mu^{-1};\Phi_2\RR)dg_i,
    \end{align*}
    where in $\calP_{\calD,n}^{\GSpin}$ the integral is defined over $U_{1}(\A) \sm G(\SL_2 \tm \SL_2)(\A)$. Here, $U_1$ is the unipotent subgroup of $G(\SL_2 \tm \SL_2)$ with the parametrisation $U_1 = \{(n(x), I_2)\}$, and $w_2$ is the long Weyl element in $\GL_2$. Next, for the period integrals $\calP_{\calD,D_4}=\calP_{\calD,D_4}(\varphi,s,w;\Phi)$ and $\calP_{\calD,D_5}=\calP_{\calD,D_5}(\varphi,s;\Phi)$ defined in Section \ref{sec: Period Integrals on Special Orthogonal Similitude Groups}, they unfold to 
    \begin{align*}
        \calP_{\calD,D_4} =
        \int_{N_{S(\GL_2 \tm \GSO_4)}(\A) \sm S(\GL_2 \tm \GSO_4)(\A)}
        \int_{\A}
        \int_{\Mat_{2,2}(\A)}
        |\det g_1|^s 
        &\Phi_2(e_2 g_1)
        W_{\sigma_4}^{\bar\psi}(w[42] x_{\alpha_2 + \alpha_4}(r) u(X) J_{D_4}(g_1,g_2))\\
        \cdot&f_w^\ast(g_2)
        \Om_\psi(\iota_{D_4}(g_1,g_2))
        \Phi_4(x)
        \,dr\,dx\,dg_1\,dg_2,
    \end{align*}
    \begin{align*}
         \calP_{\calD,D_5}=\int_{N_{\GL_2}(\A) \sm \GL_2(\A)}
        \int_{Z(\A)}
        \int_{X(\A)}
        |\det g|^{s-1/2} \gamma_{\psi}^{-1}(\det g)
        W_{\sigma_5}^{\bar\psi}(zx J_{D_5}(g))
        \Om_\psi(\iota_2((\det g) g^\ast)) \Phi_2(x)
        \,dx\,dz\,dg,
    \end{align*}
    where $N_{S(\GL_2 \tm \GSO_4)}$ is the maximal unipotent subgroup of $S(\GL_2 \tm \GSO_4)$ consisting of upper triangular unipotent matrices. Also, $f_w^\ast(g_2) = \zeta_{k}(2w) f_w(g_2)$,
    $x_{\alpha_2 + \alpha_4}(r) = I_8 + r(E_{25} - E_{47}) \in \GSO_8$
    and $w[42]$ is the Weyl element in $\GSO_8$ given by
    \begin{align*}
        w[42] = \LL( \begin{smallmatrix}
            I_2 &&&\\
            &&I_2 &\\
            &I_2&&\\
            &&&I_2
        \end{smallmatrix}\RR) \LL( \begin{smallmatrix}
            1 &&&&\\
            &w_2&&&\\
            &&I_2&&\\
            &&&w_2&\\
            &&&&1
        \end{smallmatrix} \RR).
    \end{align*}
    Likewise, $N_{\GL_2}$ is the unipotent subgroup of $\GL_2$ consisting of upper triangular unipotent matrices, and $Z$ and $X$ are subgroups of $\GSO_{10}$ defined in Section \ref{sec: FJ coefficient of GSO}. Lastly for the glued period integrals $\calP_{\calD,m,n}^{G_1,G_2} = \calP_{\calD,m,n}^{G_1,G_2}(\varphi,s,w;\Phi)$ defined in Section \ref{sec: Glued Period Integrals on General Linear Groups and Spin Similitude Groups}, they unfold to 
    \begin{align*}
        \calP_{\calD,m,n}^{\GL,\GL}= 
        \int_{U_{1,2}^\Delta(\A) U_3(\A) \sm S(\GL_2^{\tm3})(\A)}
        &\mathcal{U}_{[m-2,1^2]}(W_{\pi_m}^{\psi})(g_1,w;\Phi_2) 
        W_{\pi_2'}^{\bar \psi}(g_2^\ast)\\
        \cdot&
        \mathcal{U}_{[n-2,1^2]}(W_{\pi_n''}^{\bar \psi})(g_3,s;\Phi_2')
        \omega_\psi(\rho(g_1,g_2,g_3))\Phi_4(\eta_0)\,dg_i,\\
        \calP_{\calD,m,n}^{\GL,\GSpin}= \int_{U_1(\A) U_{2,3}^\Delta(\A) U_4(\A) \sm S''(\GL_2^{\tm4})(\A)} 
        &\mathcal{U}_{[m-2,1^2]}(W_{\pi_m}^{\bar \psi})(g_1,w;\Phi_2)
        W_{\pi_2'}^{\bar \psi}(g_2^\ast)\\
        \cdot&
        \mathcal{U}_{[2n-3,1^4]}(W_{\tau_n}^{\psi})(j_n(g_4,g_3), s;\Phi_2')
        \omega_\psi(\rho(g_2,g_3,g_1))\Phi_4(\eta_0)\,dg_i,\\
        \calP_{\calD,m,n}^{\GSpin,\GSpin}= \int_{U_1(\A) U_{2,3}^\Delta(\A) U_4(\A) U_5(\A) \sm S^\ast(\GL_2^{\tm5})(\A)} 
        &\mathcal{U}_{[2m-3,1^4]}(W_{\tau_m}^{\psi})(j_m(g_1,g_2),w;\Phi_2)
        W_{\pi_2'}^{\bar \psi}(g_3^\ast)\\
        \cdot&\mathcal{U}_{[2n-3,1^4]}(W_{\tau_n'}^{\bar \psi})(j_n(g_4,g_5),s;\Phi_2')
        \omega_\psi(\rho(g_2,g_3,g_5))\Phi_4(\eta_0)\,dg_i,
    \end{align*}
    Here, $\eta_0$ is the representative in $\Mat_{1,4}$ given in Lemma \ref{lem: group action}. Also, $U_i$ is the subgroup of $\GL_2$ consisting of upper triangular matrices, realised in the $i$-th component of $\GL_2^{\tm n}$, and $U_{i,j}^\Delta$ is the subgroup of $\GL_2 \tm \GL_2$ given by $\{(n(x)^\ast, n(x))\}$, realised as subgroups of $\GL_n^{\tm n}$ with the parametrisation
    \begin{align*}
        U_i =& \{(I_2,\dots, I_2, n(x), I_2,\dots, I_2)\} \subset \GL_2^{\tm n},\\
        U_{i,j}^\Delta =& \{(I_2,\dots, I_2, n(x)^\ast, I_2,\dots, I_2, \dots, I_2, n(x), I_2,\dots, I_2)\} \subset \GL_2^{\tm n}.
    \end{align*}
\end{thm}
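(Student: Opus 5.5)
Convergence and unfolding are handled separately, with all cases run through one template. For absolute convergence away from the poles, I will use that each integrand pairs a cusp form, or a Bessel/Fourier--Jacobi coefficient of one, which is rapidly decreasing on a Siegel domain of $H$, against Eisenstein series or theta series of moderate growth. Lemma \ref{lem: root exchange Bessel} and Lemma \ref{lem: root exchanges Fourier-Jacobi} express $\calB$ and $\calFJ$ as integrals of $\varphi$ over compact quotients plus adelic integrals over $\Mat(\A)$ against Schwartz data. Rapid decay of $\varphi$ then persists after restriction to $H(\A)$, modulo the centre $Z_{G,H}$ built into $[H]$. Meromorphy of $E_n$ and $E_P^\ast$ then gives convergence for all $s,w$ away from their poles.

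For the unfolding, I fix $\operatorname{Re}(s),\operatorname{Re}(w)\gg0$ (for the multivariable integrals, $\operatorname{Re}(w)\gg\operatorname{Re}(s)\gg0$) so that the Eisenstein series are absolutely convergent sums, or the theta series may be interchanged with the integral. I then unfold according to which object is present.

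\emph{Eisenstein series.} For $E_2$, $E_{12}$ and $E_P^\ast$, I unfold the sum over $P(k)\sm G(k)$ and decompose $P(k)\sm G(k)/H(k)$. For $E_{12}$ on $\GSp_4\otimes\GL_3$ and $\GL_4'\otimes\GL_3$, Lemma \ref{lem: Double coset decomp} gives the representatives $\gamma_r,\omega_t$ and their stabilisers. The orbits $r=1,2$ and $t=1,2,4$ have stabilisers containing the unipotent radical of a proper parabolic of one factor, embedded trivially in the other, so cuspidality of $\varphi_{\tau}$ or $\varphi_{\pi_3}$ kills them. Only $\gamma_3=\omega_3$ survives. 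Its stabiliser is a mirabolic-type group, which I unfold further via the Fourier expansion of $\varphi_{\pi_3}$ along the mirabolic of $\GL_3$. This produces $W_{\tau}^{\psi'}W_{\pi_3}^{\bar\psi}$ against the section $\Phi_{12}(e_{12}\gamma\,\cdot)$, and collapsing the torus integral in \eqref{eqn: Godement section GLn} with $Z'$ gives the stated formula.

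\emph{Theta series.} For $\Theta^{\Phi_4}\circ\rho$ and $\Theta^{\Phi_{12}}\circ\Ext^2$, I first restrict to the Siegel-Levi subgroups $S(\GL_2\tm\GL_2\tm\GL_1)$ and $S'(\GSp_4\tm\GL_3)$, where the Weil representation acts geometrically by \eqref{eqn: formula Weil - Levi Siegel}. I write the theta series as a sum over $\Mat_{1,4}(k)$ or $\Mat_{1,12}(k)$ and split that sum into the orbits of Lemma \ref{lem: group action}. The degenerate orbits $\eta_1,\eta_2$ and $\xi_1,\dots,\xi_4$ have stabilisers containing a unipotent radical, on which some cusp form (or Bessel/FJ coefficient) integrates to zero. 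I expect a few of these to need a further Fourier expansion before cuspidality applies. The open orbits $\eta_0$ and $\xi_0$ have reductive-times-unipotent stabilisers: the diagonal $(g,g^\ast)$, or the mirabolic-type group of $\xi_0$. Unfolding over these and then Fourier expanding the remaining cusp forms along their maximal unipotents yields the $U^\Delta_{i,j}$ and $U'_{\GSp_4\tm\GL_4}$ quotients.

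\emph{Degenerate Whittaker coefficients.} Having unfolded, I insert the reduced forms from Lemma \ref{lem: root exchange Bessel} and Lemma \ref{lem: root exchanges Fourier-Jacobi}. I Fourier expand $\varphi$ along the remaining abelian unipotent pieces, such as the $A'$ or $\Mat_{k,2m-1}$ parts, and discard the nongeneric terms by cuspidality. This turns $\calB$ and $\calFJ$ into the $\mathcal U_{[\cdot]}(W)$ of Section \ref{sec: Aux integrals involving Whittaker functions}.

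For $\GSO_8$ and $\GSO_{10}$ I follow \cite{GH}: unfold $E_P^\ast$ over the Siegel parabolic, then perform root exchanges that introduce the Weyl element $w[42]$ and the variable $x_{\alpha_2+\alpha_4}(r)$, and finally expand to reach $W_{\sigma_4}$. For $\calD_{D_5}$, which has no Eisenstein series, I Fourier expand in the $Z$ and $N_{\GL_2}$ directions and use the exchange $Z\leftrightarrow Y$ to get the $\int_{Z(\A)}$.

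I expect the main obstacle to be the orbit-vanishing step for the $\Ext^2$ theta series, that is, showing that the orbits $\xi_1,\dots,\xi_4$ contribute zero and identifying the surviving integral exactly. These stabilisers mix $\GSp_4$ and $\GL_3$ nontrivially, so cuspidality must be applied to a unipotent radical that is only visible after conjugation. I would attack it first by writing each stabiliser's unipotent radical explicitly from Lemma \ref{lem: group action}, projecting it to each factor, and checking that some projection contains the full unipotent radical of a maximal parabolic. Where a projection does not, I would first Fourier expand along the complementary root subgroup.
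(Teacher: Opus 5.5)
\textbf{There is a genuine gap in your theta-series step.} Your convergence argument and your treatment of the Eisenstein-series cases ($\calP^{\GSpin\tm\GL}_{\calD,2,3}$ and $\calP^{\GSpin\tm\GL}_{\calD,m,3}$ via Lemma~\ref{lem: Double coset decomp}, with only $\gamma_3=\omega_3$ surviving) agree with the paper. The theta-series step, however, cannot be carried out in the order you propose.

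In $\calP^{\GSpin\tm\GL}_{\calD,2,n}$ and in the glued integrals you integrate over $[H]$, with $H=S'(\GSp_4\tm\GL_4)$, $S(\GL_2^{\tm3})$, etc. But $\Ext^2(H)$ and $\rho(H)$ are not contained in the Siegel parabolics of $\Sp_{24}$ and $\Sp_8$. Only the part of $H$ whose $\GL_4$-component lies in the $(3,1)$-parabolic (resp.\ whose outer tensor factor in $\rho$ is upper triangular) lands there. So $H(k)$ does not act on $\Mat_{1,12}(k)$ or $\Mat_{1,4}(k)$. Splitting $\Theta$ into orbits of $L(k)$, with $L=S'(\GSp_4\tm\GL_3)$ or $S(\GL_2\tm\GL_2\tm\GL_1)$, gives sums over $\mathrm{Stab}(\xi_i)(k)\sm L(k)$ that cannot be collapsed against $H(k)\sm H(\A)$. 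There is no way to ``first restrict to the Siegel--Levi''.

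A preliminary unfolding must first replace $H(k)$ by $L(k)U(k)$. The paper does this by Fourier expanding the root-exchanged coefficient $\calP_{[n-4,1^4]}(\varphi_{\pi_n})$ along the abelian group $V_{4,n}$, the column next to the $\GL_4$-block that is left out of $U'$ after the root exchange. $\GL_4$ acts on its characters with two orbits: the trivial one dies by cuspidality, and the stabiliser of $\psi(r_4)$ is exactly $S'(\GSp_4\tm\GL_3)U_{3,1}$. The analogous expansion is done on the outer $\GL_2$-factor of $\rho$ in the glued cases. Unfolding $E_4$ or $E_2$ could play this role when $n=4$ or $n=2$. For $n\ge5$, however, and whenever the outer factor of $\rho$ carries a Bessel or Fourier--Jacobi coefficient (e.g.\ always for $\calD^{\GSpin,\GSpin}_{m,n}$), no Eisenstein series is available. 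In those cases your plan fails, because you schedule the coefficient expansions only after the theta unfolding.

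\textbf{You are also missing the step that turns the open orbit into a Whittaker function.} The residual unipotent acts on the Weil representation at the base point through a character. Both identities below are read off from \eqref{eqn: formula Weil - unipotent Siegel}:
\begin{itemize}
\item $\omega_\psi(I_4\otimes\wedge^2u)\Phi_{12}(\xi_0)=\psi(x_3)\Phi_{12}(\xi_0)$ for $u\in U_{3,1}$ with last column $(x_1,x_2,x_3)$;
\item $\omega_\psi(\rho(I_2,I_2,n(x_3)))\Phi_4(\eta_0)=\psi(x_3)\Phi_4(\eta_0)$.
\end{itemize}
Together with $\psi(r_4)$, this makes the leftover $[U_{3,1}]$ (resp.\ $[U_3]$) integral part of a generic Fourier coefficient. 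The further expansions along $\{I_4+z_1E_{1,3}+z_2E_{2,4}\}$ and $\{I_4+z_3E_{1,2}\}$ then produce the twisted unipotent quotients $U'_{\GSp_4\tm\GL_4}$ and $U^\Delta_{1,2}U_3$ that appear in the statement. Your outline does not account for these.

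By contrast, the step you single out as the main obstacle is comparatively mild once the domain has been reduced. The paper treats it as Gelbart--Piatetski-Shapiro negligibility. For instance, the stabilisers of $\xi_1,\xi_2$ in Lemma~\ref{lem: group action} contain $(v,I_3)$ with $v$ in the Klingen (resp.\ Siegel) unipotent radical of $\GSp_4$, so cuspidality of $\varphi_{\tau_2}$ kills those orbits at once.
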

\begin{proof}
    For the first part, the absolute convergence follows from the fact that the Eisenstein series are of moderate growth and the degenerate Whittaker coefficient of the cusp forms $\varphi_{\pi_n}$, $\varphi_{\tau_n}$ and $\varphi_{\sigma_n}$ are rapidly decreasing \cite[Lemmas 2.1 and 6.1]{BA-So} on their respective integration domain. Now, we will assume for sufficiently large $\operatorname{Re}(s), \operatorname{Re}(w) \gg0$ (resp. $\operatorname{Re}(s) \gg \operatorname{Re}(w) \gg0$) for the period integrals defined in \ref{sec: Period Integrals on General Linear Groups and Spin Similitude Groups}, \ref{sec: Period Integrals on Special Orthogonal Similitude Groups} and \ref{sec: Glued Period Integrals on General Linear Groups and Spin Similitude Groups} (resp. for the period integrals defined in Section \ref{sec: Multi-Variable Period Integrals}).
    
    The identity for the period integrals $\calP_{\calD,m,2}^{\GSpin\tm\GL}$ and $\calP_{\calD,n}^{G}$ for $G\in \{\GL,\GSpin\}$ follows from a direct application of the root exchange Lemma \ref{lem: root exchange Bessel} and \ref{lem: root exchanges Fourier-Jacobi} for the degenerate Whittaker coefficients, and the Bruhat decomposition of $\GL_2$. Next for the period integrals $\calP_{\calD,2,n}^{\GSpin\tm\GL}$, $\calP_{\calD,m,3}^{\GSpin\tm \GL}$ and the glued period integrals $\calP_{\calD,m,n}^{G_1,G_2}$, their unfolding processes follow as a direct consequence of the lemmas discussed in Section \ref{sec: Group actions and Double coset decomposition}. Hence, we will detail relevant aspects of the unfolding process for the $\calP_{\calD,2,n}^{\GSpin\tm \GL}$-period integral, and sketch the process for the remaining ones.

    In the $\calP_{\calD,2,n}^{\GSpin\tm \GL}$-period integral, we proceed with the use of Lemmas \ref{lem: root exchange Bessel} and \ref{sec: root exch for Fourier-Jacobi}  perform the root exchange for the degenerate Whittaker coefficient $\calP_{[n-4,1^4]}(\varphi_{\pi_n})$. Then, considering Fourier expansion along the abelian unipotent subgroup
    \begin{align*}
        V_{4,n}= \LL\{v(r) =v(r_1,r_2,r_3,r_4) = I_n + \sum_{i=1}^4 r_i E_{i,i+4} \RR\},
    \end{align*}
    and let $\GL_4$ in the second component of $S'(\GSp_4 \tm \GL_4)$ act on it with two orbits. By cuspidality of $\varphi_{\pi_n}$, the trivial orbit vanishes. Moreover, we choose the non-trivial character representative $\psi_{V_{4,n}}$ given by
    \begin{align*}
        \psi_{V_{4,n}}(v(r)) = \psi(r_4).
    \end{align*}
    From this, we see that its stabiliser is $S'(\GSp_4 \tm \GL_3) U_{3,1}$, where $U_{3,1}$ is the unipotent radical of the parabolic $P_4$ of $\GL_4$ given in \eqref{eqn: GLn parabolic subgroup}. Next, we unfold the theta series $\Theta_\psi^{\Phi}$. From lemma \ref{lem: group action}, we see the orbits of group action of $S'(\GSp_4 \tm \GL_3)$ on $\Mat_{1,12}$ are negligible in the sense of Gelbart-Piateski-Shapiro \cite{GePS} except for the generic orbit with representative $\xi_0$. Moreover, we have
\begin{align*}
    (I_4 \otimes \wedge^2 \LL(\begin{smallmatrix}
        1 & & &x_1\\
        &1&&x_2\\
        &&1&x_3&\\
        &&&1
    \end{smallmatrix}\RR)) = \begin{pmatrix}
        I_{12} & X(x_1,x_2,x_3)\\
        &I_{12}
    \end{pmatrix} \in \Sp_{24},
\end{align*}
where $X(x_1,x_2,x_3)$ is the matrix in $\Mat_{12,12}$ given by
\begin{align*}
    X(x_1,x_2,x_3)= x_1\LL( - \sum_{p=1}^2 E_{p,p+4} - \sum_{q=7}^8 E_{q,q+4} + \sum_{r=3}^6 E_{r,r+4} \RR) + x_2 \LL( \sum_{r=1}^4 E_{r,r} + \sum_{p=9}^{12} E_{p,p}\RR) + x_3 \sum_{r=5}^{12} E_{r,r-4}.
\end{align*}
Using the explicit Weil formula \eqref{eqn: formula Weil - unipotent Siegel} we see that
\begin{align*}
    \omega_\psi(I_4 \otimes \wedge^2 \LL(\begin{smallmatrix}
        1 & & &x_1\\
        &1&&x_2\\
        &&1&x_3&\\
        &&&1
    \end{smallmatrix}\RR))\Phi_{12}(\xi_0) = \psi(x_3) \Phi_{12}(\xi_0).
\end{align*}
Thus, proceeding with a standard Fourier expansion argument along the unipotent subgroups $\{I_4 + z_1 E_{1,3} + z_2 E_{2,4}\}$ and $\{I_4 + z_3 E_{1,2}\}$ of $\GL_4$ we obtain the desired identity.

Likewise, for the glued period integrals $\calP_{\calD,m,n}^{G_1,G_2}$ we perform the same unfolding argument as above, where instead we we unfold the Theta series $\Theta_\psi^{\Phi_4}$ by considering the group action of $S(\GL_2 \tm \GL_2 \tm \GL_1)$ on $\Mat_{1,4}$ given in \eqref{eqn: group action GL2 x GL2  x GL1}. By Lemma \ref{lem: group action}, it is clear that all the orbits are negligible in the sense of Gelbart-Piateski-Shapiro \cite{GePS}, except for the generic orbit represented by $\xi_0 = (0,1,1,0)$. In fact, we see that 
    \begin{align*}
        \rho(I_2, I_2,n(x_3)) = \begin{pmatrix}
            I_4 & X(x_3)\\ & I_4
        \end{pmatrix} \in \Sp_8, && X(x_3) = \diag(-x_3, x_3, x_3, -x_3),
    \end{align*}
    and by the formula \eqref{eqn: formula Weil - unipotent Siegel} of the Weil representation we have
    \begin{align*}
        \omega_\psi(\rho(I_2, I_2,n(x_3))\Phi_4(\xi_0) = \psi(x_3) \Phi_4(\xi_0).
    \end{align*}
Analogously, for the $\calP_{\calD,m,3}^{\GSpin\tm\GL}$-period integral we use Lemma \ref{lem: root exchange Bessel} to perform the root exchange for the Bessel coefficient $\calB_{[2m-5,1^6]}(\varphi_{\tau_m})$ and unfold the mirabolic Eisenstein series $E_{12}(\cdot)$. Using the double coset decompositions in \eqref{lem: Double coset decomp}, we see that all the orbits are negligible in the sense of sense of Gelbart-Piateski-Shapiro \cite{GePS} except for the generic orbit with represented by $\gamma_3 = \omega_3$. Proceeding with with a standard Fourier expansion argument along the unipotent subgroups of $\GL_3$ we obtain the desired identity. 

Finally, for the periods $\calP_{\calD,D_4}$ and $\calP_{\calD,D_5}$, their unfolding process follows that of \cite{GH} and \cite{G} respectively. For the period integral $\calP_{\calD,D_5}$ we use Lemma \ref{lem: root exchanges Fourier-Jacobi} to perform the root exchange for the Fourier-Jacobi coefficient $\calFJ_{[4^2,1^2]}(\varphi_{\sigma_5})$ to obtain
    \begin{align*}
        \int_{\GL_2(k)\sm \GL_2(\A)} \gamma_\psi^{-1}(\det g)& |\det g|^{s-1/2} 
        \int_{[V]} \int_{[W]} \int_{[Z]} \int_{[Y]} \int_{X(\A)}
        \\
        \cdot&\varphi_{\sigma_5}(vwzyx J_{D_5}(g)) \Om_\psi(\iota_2( (\det g) g^\ast)) \Phi_2(x) \psi_{[4^2,1^2]}(vwz)\,dx\,dy\,dz\,dw\,dv\,dg,
    \end{align*}
    where $H = \GL_2$ and $V,W,Z,Y,X$ are unipotent subgroups defined in Section \ref{sec: FJ coefficient of GSO}. Proceeding with the exact unfolding process detailed in \cite[Proposition 2.5]{Gin95a}, we obtain the desired identity.
Lastly for the period integral $\calP_{\calD,D_4}$, we write $H = S(\GL_2 \tm \GSO_4)$ and $Z_{H}$ be the center of $H$. We proceed with unfolding both Eisenstein series and using Lemma  \ref{lem: root exchanges Fourier-Jacobi} to perform the root exchange for the Fourier-Jacobi coefficient $\calFJ_{[2^2,1^2]}(\varphi_{\sigma_4})$ to obtain
\begin{align*}
    \int_{Z_{H}(\A) M_Q(k) U_Q(k) \sm H(\A)}
        &\int_{[\Mat_{2,2}]}
        \int_{k\sm \A}
        \int_{\Mat_{2,2}(\A)}
        F_2(g_1,s,\mathbf 1; \Phi_2) f_w^\ast(g_2)\\
        \cdot&
        \varphi_{\sigma_4}(u(0,Y;t) u(X,0;0) J_{D_4}(g_1,g_2)) 
        \Om_\psi(\iota_{D_4}(g_1,g_2)) \Phi_4(x) \psi(t) \,dX\,dt \,dY\,dg_1\,dg_2,
\end{align*}
where $M_Q$ and $U_Q$ are subgroups of $H$ given by
\begin{align*}
    M_Q = \LL\{(\diag(t_1,t_2), \diag(A,\lambda A^\ast)) \mid \begin{matrix}
            t_1,t_2 \in \GL_1, A \in \GL_2,\\
            \lambda t_1 t_2 = 1
        \end{matrix} \RR\}, && U_Q = \LL\{ (n(x), \LL( \begin{smallmatrix}
            I_2 & Z\\ & I_2
        \end{smallmatrix} \RR)) \mid Z = \diag(z,-z) \RR\}.
\end{align*}
    Then, we proceed with Fourier expansion along the abelian unipotent subgroup $V_2$ generated by the root subgroups $x_{\alpha_2+\alpha_3}(r_1)$ and $x_{\alpha_1 + \alpha_2 + \alpha_3}(r_2)$ and let $M_Q$ act it with two orbits. By cuspidality, it is clear that the trivial orbit vanishes. Indeed, there exists a constant term along $U_{P_3}$ the unipotent radical of the standard maximal parabolic subgroup $P_3\subset \GSO_8$ defined by deleting the simple root $\alpha_3$. Consequently, the above integral expression becomes
    \begin{align*}
        &\int_{Z_{H}(\A) R_S(k) U_S(k) U_Q(\A) \sm H(\A)} \int_{[V_2]} \int_{[U_Q]}
        \int_{[\Mat_{2,2}]}
        \int_{k\sm \A}
        \int_{\Mat_{2,2}(\A)}
        F_2(g_1,s,\mathbf 1; \Phi_2) f_w^\ast(g_2)\\
        \cdot&
        \varphi_{\sigma_4}(v(r_1,r_2) u' u(0,Y;t) u(X,0;0)J_{D_4}(g_1,g_2)) 
        \Om_\psi(\iota_{D_4}(g_1,g_2)) \Phi_4(x) \psi(t+r_1) \,dX\,dt \,dY\,du'\,dr_i\,dg_j,
    \end{align*}
    where $v(r_1,r_2) = x_{\alpha_2+\alpha_3}(r_1) x_{\alpha_1 + \alpha_2 + \alpha_3}(r_2)$ and $R_S$ and $U_S$ are subgroups of $M_Q$ given by
    \begin{align*}
        R_S = \LL\{(\diag(t_1,t_2), \diag(a,t_1^{-1}, t_2^{-1}, a^{-1}t_1^{-1}t_2^{-1})) \RR\}, && U_S = \LL\{ (I_2, \begin{pmatrix}
            n(z) & \\ & n(-z)
        \end{pmatrix}) \RR\}.
    \end{align*}
    Next, consider Fourier expansion along the unipotent subgroup generated by $x_{-\alpha_2}(r_3)$ and using the same arguments as in \cite[Equation 2.13]{GH} the above integral expression becomes
    \begin{align*}
        \int \int_{[V_2]} &\int_{[U_Q]} \int_{(k \sm \A)^3} \int_{\A} \int_{\Mat_{2,2}(\A)} 
        F_2(g_1,s,\mathbf 1; \Phi_2) f_w^\ast(g_2)\Om_{\psi}(\iota_{D_4}(g_1,g_2))\Phi_4(x) \psi(t + r_1)\\
        \cdot&\varphi_{\sigma_4}(v(r_1,r_2) u' x_{-\alpha_2}(r_3) u(0,Y';t) x_{\alpha_2+\alpha_4}(y_5) u(X,0;0) J_{D_4}(g_1,g_2)) \, dX\,dt\,dY'\,dr_3\,du'\,dr_i\,dg_i,
    \end{align*}
    where $Y' = Y'(y_6,y_7,y_8)$ is realised in $\GSO_8$ via
    \begin{align*}
        \begin{pmatrix}
             I_2 &&Y'&\\
             &I_2 & & - Y'\\
             &&I_2 & \\
             &&&I_2
         \end{pmatrix}, && Y' = \begin{pmatrix}
             y_7 & y_8\\
             0 & y_6
         \end{pmatrix}.
    \end{align*}
    Finally using the left-invariance of cusp form $\varphi_{\sigma_4}$ we proceed by conjugating the unipotent elements in the argument of $\varphi_{\sigma_4}$ with the Weyl element $w[42]$. Finally proceeding with the Fourier expansion along the unipotent subgroup generated by simple roots $x_{\alpha_1}(a_1)$ and $x_{\alpha_4}(a_4)$, we obtain our desired identity.
\end{proof}
From the above, we have part $(a)$ of Theorem \ref{thm: main thm}. Moreover, by the uniqueness of the Whittaker model we have the following corollary.
\begin{cor}
    \label{cor: Euler factorisation}
    Following the notations introduced in Theorem \ref{thm: Unfolding of period integral and absolute conv}, for factorisable integration data and
    for sufficiently large $\operatorname{Re}(s), \operatorname{Re}(w) \gg0$ (resp. $\operatorname{Re}(w) \gg \operatorname{Re}(s) \gg0$) the period integrals defined in \ref{sec: Period Integrals on General Linear Groups and Spin Similitude Groups}, \ref{sec: Period Integrals on Special Orthogonal Similitude Groups} and \ref{sec: Glued Period Integrals on General Linear Groups and Spin Similitude Groups} (resp. the period integrals defined in Section \ref{sec: Multi-Variable Period Integrals}) admits an Euler factorisation. 
    Namely for $(a,b) \in \{(m,2), (2,n), (m,3)\}$, the period integrals $\calP_{{\mathcal D},a,b}^{\GSpin\tm\GL}(\varphi,s;\Phi)$ defined in Section \ref{sec: Period Integrals on General Linear Groups and Spin Similitude Groups} have the Euler factorisation
        \begin{align*}
            \calP_{{\mathcal D},a,b}^{\GSpin\tm\GL}(\varphi,s;\Phi) =& \prod_\nu \calZ_{a,b}^{\GSpin\tm\GL}(W_\nu,s;\Phi_\nu),
        \end{align*}
        where their local integrals $\calZ_{a,b}^{\GSpin\tm\GL}=\calZ_{a,b}^{\GSpin\tm\GL}(W_\nu,s;\Phi_\nu)$ are defined by
        \begin{align*}
    \calZ_{m,2}^{\GSpin\tm\GL}=& 
    \int_{U'_{G(\SL_2\tm\SL_2)}(k_\nu)\sm G(\SL_2\tm\SL_2)(k_\nu)}\mathcal{U}_{[2m-3,1^4]}(W_{\tau_{m},\nu}^{\bar \psi_\nu})(j_m(g_1,g_2),s;\Phi_{2,\nu})
        W_{\pi_{2},\nu}^{\psi_\nu}(g_2)dg_i,\\
    \calZ_{2,n}^{\GSpin\tm\GL}=&
        \int_{ U_{\GSp_4\tm\GL_4}'(k_\nu) \sm S'(\GSp_4 \tm \GL_4)(k_\nu)}
        W_{\tau_2,\nu}^{\psi'_\nu}(g_1^\ast) 
         \mathcal{U}_{[n-4,1^4]}(W_{\pi_n,\nu}^{\bar \psi_\nu})(g_2,s;\Phi_{4,\nu}) 
         \omega_\psi(g_1 \otimes \wedge^2 g_2) \Phi_{12,\nu}(\xi_0)dg_i,\\
     \calZ_{2,3}^{\GSpin\tm\GL}=&
        \int_{Z'(k_\nu)U_{\GSp_4\tm\GL_3}'(k_\nu)\sm (\GSp_4 \tm \GL_3)(k_\nu)}
        W_{\tau_2,\nu}^{\psi'_\nu}(g_1) W_{\pi_3,\nu}^{\bar \psi_\nu}(g_2^\ast)
        \Phi_{12,\nu}(e_{12}\gamma(g_1\otimes g_2))\,dg_i,\\
        \calZ_{m,3}^{\GSpin\tm\GL}=& 
        \int
        \mathcal{U}_{[2m-5,1^6]}(W_{{\tau_m,\nu}}^{\psi_\nu})(j_m(g_1))
        W_{{\pi_3,\nu}}^{\bar\psi_\nu}(g_2^\ast)
        \Phi_{12,\nu}(e_{12} \gamma (\operatorname{pr}(g_1) \otimes g_2))dg_i.
    \end{align*}
    where the integral in $\calZ_{\calD,m,3}^{\GSpin\tm\GL}$ is defined over $Z'(k_\nu) U'_{\GL_4 \tm \GL_3}(k_\nu) \sm (\GSpin_6 \tm \GL_3)(k_\nu)$.
    Next, the multivariable period integrals defined in Section \ref{sec: Multi-Variable Period Integrals}, $\calP_{\calD,n}^{G} = \calP_{\calD,n}^{G}(\varphi,s,w,\chi,\mu;\Phi)$ for $G \in \{\GL,\GSpin\}$ have the Euler factorisation
    \begin{align*}
        \calP_{\calD}^{G}(\varphi,s,w,\chi,\mu;\Phi) =& \prod_\nu \calZ_{n}^{G}(W_\nu,s,w,\chi_\nu,\mu_\nu;\Phi_\nu),
    \end{align*}
    where their local integrals $\calZ_{n}^{G}=\calZ_{n}^{G}(W_\nu,s,w,\chi_\nu,\mu_\nu;\Phi_\nu)$ are defined by
    \begin{align*}
       \calZ_{n}^{\GL}=& \int_{\GL_2(k_\nu)}
            \chi_\nu(\det g)
            \mathcal{U}_{[n-2,1^2]}(W_{\pi_n,\nu}^{\bar\psi_\nu})
                \LL(g, \frac{s+w}{2};\Phi_{2,\nu}'\RR)
                F_2\LL(w_2g, \frac{w-s+1}{2},\chi\mu^{-1};\Phi_{2,\nu}\RR)\,dg,\\
        \calZ_{n}^{\GSpin} =&\int
         \chi_\nu(\det g_1)
         \mathcal U_{[2n-3,1^4]}(W_{\tau_n,\nu}^{\bar\psi_\nu})\LL(j_n(g_1,g_2), \frac{s+w}{2};\Phi_{2,\nu}' \RR)
         F_2\LL(w_2 g_2, \frac{w-s+1}{2}, 
         \chi_\nu \mu_\nu^{-1};\Phi_{2,\nu}\RR)dg_i,
    \end{align*}
    where the integral in $\calP_{\calD,n}^{\GSpin}$ is defined over $U_{1}(k_\nu) \sm G(\SL_2 \tm \SL_2)(k_\nu)$.
    Also, the period integrals \linebreak $\calP_{\calD, D_4}(\varphi,s,w;\Phi)$ and $\calP_{\calD, D_5}(\varphi,s;\Phi)$ defined in Section \ref{sec: Period Integrals on Special Orthogonal Similitude Groups} have the Euler factorisation
    \begin{align*}
            \calP_{\calD, D_4}(\varphi,s,w;\Phi) =& \prod_\nu \calZ_{D_4}(W_\nu,s,w;\Phi_\nu),&&
            \calP_{\calD, D_5}(\varphi,s;\Phi) = \prod_\nu \calZ_{D_5}(W_\nu,s;\Phi_\nu),
        \end{align*}
        where their local integrals $\calZ_{D_4} = \calZ_{D_4}(W^\circ_\nu,s,w;\Phi_\nu)$ and $\calZ_{D_5} = \calZ_{D_5}(W^\circ_\nu,s;\Phi_\nu)$ are defined by
\begin{align*}
            \calZ_{D_4}=\int_{N_H(k_\nu) \sm H(k_\nu)}
        \int_{k_\nu}
        \int_{\Mat_{2,2}(k_\nu)}
        |\det g_1|^s 
        \Phi_{2,\nu}(e_2 g_1)
        &W_{\sigma_4,\nu}^{\bar \psi_\nu}(w[42] x_{\alpha_2 + \alpha_4}(r) u(X) J_{D_4}(g_1,g_2))
        \\
        \cdot&f_{w,\nu}^\ast(g_2)
        \Om_\psi(\iota_{D_4}(g_1,g_2))
        \Phi_{4,\nu}(x)
        \,dr\,dx\,dg_i,
        \end{align*}
        \begin{align*}
            \calZ_{D_5}= \int_{N_H(k_\nu) \sm H(k_\nu)}
        \int_{Z(k_\nu)}
        \int_{X(k_\nu)}
        |\det g|^{s-1/2} \gamma_{\psi}^{-1}&(\det g)
        W_{\sigma_5,\nu}^{\bar \psi_\nu}(zx J_{D_5}(g))\Om_\psi(\iota_2((\det g) g^\ast)) \Phi_{2,\nu}(x)
        \,dx\,dz\,dg.
        \end{align*}
        Lastly, the glued period integrals $\calP_{\calD,m,n}^{G_1,G_2}(\varphi,s,w;\Phi)$ defined in Section \ref{sec: Glued Period Integrals on General Linear Groups and Spin Similitude Groups} have the Euler factorisation
        \begin{align*}
            \calP_{\calD,m,n}^{G_1,G_2}(\varphi,s,w;\Phi) =& \prod_\nu \calZ_{m,n}^{G_1,G_2}(W_\nu,s,w;\Phi_\nu),
        \end{align*}
        where their local integral $\calZ_{m,n}^{G_1,G_2}=\calZ_{m,n}^{G_1,G_2}(W_\nu,s,w;\Phi_\nu)$ are defined by
        \begin{align*}
        \calZ_{m,n}^{\GL,\GL}= 
        \int \mathcal{U}_{[m-2,1^2]}(W_{\pi_m}^{\psi})(g_1,w;\Phi_2) 
        W_{\pi_2'}^{\bar \psi}(g_2^\ast)
        \mathcal{U}_{[n-2,1^2]}(W_{\pi_n''}^{\bar \psi})(g_3,s;\Phi_2')
        \omega_\psi(\rho(g_1,g_2,g_3))\Phi_4(\eta_0)\,dg_i,
    \end{align*}
    here, the integral in $\calP_{\calD, m,n}^{\GL,\GL}$ is defined over $U_{1,2}^\Delta(k_\nu) U_3(k_\nu) \sm S(\GL_2^{\tm3})(k_\nu)$ and 
    \begin{align*}
        \calZ_{m,n}^{\GL,\GSpin}= &\int_{U_1(k_\nu) U_{2,3}^\Delta(k_\nu) U_4(k_\nu) \sm S''(\GL_2^{\tm4})(k_\nu)} 
        \mathcal{U}_{[m-2,1^2]}(W_{\pi_m,\nu}^{\bar \psi_\nu})(g_1,w;\Phi_{2,\nu})
        W_{\pi_2',\nu}^{\bar \psi_\nu}(g_2^\ast)\\
        \cdot&
        \mathcal{U}_{[2n-3,1^4]}(W_{\tau_n,\nu}^{\psi_\nu})(j_n(g_4,g_3), s;\Phi_{2,\nu}')
        \omega_\psi(\rho(g_2,g_3,g_1))\Phi_{4,\nu}(\eta_0)\,dg_i,\\
        \calZ_{m,n}^{\GSpin,\GSpin}=& \int_{U_1(k_\nu) U_{2,3}^\Delta(k_\nu) U_4(k_\nu) U_5(k_\nu) \sm S^\ast(\GL_2^{\tm5})(k_\nu)} 
        \mathcal{U}_{[2m-3,1^4]}(W_{\tau_m,\nu}^{\psi_\nu})(j_m(g_1,g_2),w;\Phi_{2,\nu})
        W_{\pi_2',\nu}^{\bar \psi_\nu}(g_3^\ast)\\
        \cdot&\mathcal{U}_{[2n-3,1^4]}(W_{\tau_n',\nu}^{\bar \psi_\nu})(j_n(g_4,g_5),s;\Phi_{2,\nu}')
        \omega_\psi(\rho(g_2,g_3,g_5))\Phi_{4,\nu}(\eta_0)\,dg_i.
    \end{align*}
\end{cor}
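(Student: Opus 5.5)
The plan is to deduce the corollary directly from the unfolded expressions in Theorem \ref{thm: Unfolding of period integral and absolute conv}, using the uniqueness of local Whittaker models. The argument is uniform across cases, so I describe it once and then note the case-specific points.

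First, take factorisable data: $\varphi=\otimes_\nu\varphi_\nu$ in each cuspidal generic representation, $\Phi=\otimes_\nu\Phi_\nu$ for every Schwartz--Bruhat function (the $\Phi_2,\Phi_2',\Phi_4,\Phi_{12}$), and factorisable Siegel sections $f_w=\otimes f_{w,\nu}$. By uniqueness of Whittaker models for $\GL_n$, $\GSpin_{2n+1}$ and $\GSO_{2n}$, the global Whittaker functions $W_{\varphi}^{\psi}$ defined in the Preliminaries factor as $\prod_\nu W_{\nu}^{\psi_\nu}$. For the characters $\psi'$, $\bar\psi$ we use the same argument, after conjugating by a torus element of $G(k)$ that carries them to the standard generic character.

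Next, check that every ingredient of the unfolded integrands factors as a product of local objects:
\begin{itemize}
\item the auxiliary functions $\mathcal U_{[\cdot]}(W)$ of Section \ref{sec: Aux integrals involving Whittaker functions} are adelic integrals over $\Mat(\A)$ of $W\cdot\Omega_\psi\Phi$;
\item the Weil representation satisfies $\omega_\psi=\otimes\omega_{\psi_\nu}$, once the global splitting over $H(\A)$ is taken as the product of local splittings;
\item $\gamma_\psi(\det g)$ and $|\det g|^s$ are products of local factors, and so are the Godement sections $F_2$, by Tate-type factorisation of \eqref{eqn: Godement section GLn};
\item the quotient measures on $U'(\A)\sm H(\A)$ and $Z'(\A)U'(\A)\sm H(\A)$ are products of local Tamagawa-normalised measures.
\end{itemize}
Fubini then turns each unfolded integral into $\prod_\nu\calZ(W_\nu,s;\Phi_\nu)$, with the local integrals exactly as displayed.

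The main obstacle is justifying the interchange, that is, proving that the unfolded adelic integral converges absolutely in the stated range, $\mathrm{Re}(s),\mathrm{Re}(w)\gg0$ (respectively $\mathrm{Re}(w)\gg\mathrm{Re}(s)\gg0$). The first thing to try is the following:
\begin{itemize}
\item use the Iwasawa decomposition $H=N_HT_HK_H$ to reduce to an integral over the torus;
\item apply the standard gauge estimates for Whittaker functions (rapid decay in the simple-root coordinates, moderate growth elsewhere), which extend to the partial integrals $\mathcal U_{[\cdot]}$;
\item bound the Schwartz functions evaluated at $e_2g$, $e_{12}\gamma(\cdot)$ or $\xi_0$, which force decay in the remaining torus directions.
\end{itemize}
Together these should dominate $|\det g|^{\mathrm{Re}(s)}$ for large real part. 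At almost all places the unramified local integrals are absolutely convergent and compute $L$-factors (Theorem \ref{thm: unramified comp}), so the infinite product converges absolutely in a right half-plane. The finitely many remaining local integrals converge by the same gauge estimates.

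Finally, the equality of the resulting Euler product with the global integral then holds on that half-plane. This completes the corollary case by case.
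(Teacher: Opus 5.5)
Your proposal is correct and follows the same route as the paper, which obtains the Euler factorisation in one line from the unfolded expressions of Theorem~\ref{thm: Unfolding of period integral and absolute conv} together with the uniqueness of local Whittaker models. Your extra steps (factorising the Weil representation, the Godement sections and the quotient measures, and justifying the interchange of product and integral via gauge estimates and the unramified computation) make explicit the standard Fubini argument that the paper leaves implicit.
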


\section{Unramified Computation}
\label{sec: Unramified Computation}
In this section, we will first present some preliminary information used for the unramified computation and subsequently we will evaluate the local integrals defined in the previous section with unramified integration data.
\subsection{Preliminaries for unramified computation}
\label{sec: Preliminaries for unramified computation}
In this subsection, we will define some unramified integration data and recall the Casselman-Shalika \cite{CS} and Shintani \cite{Sh} formula for the unramified Whittaker functions. Moreover, we will state some lemmas used in the computation of the unramified local integrals given in subsequent sections. This section follows largely from \cite[\textsection 8]{ACS}. Throughout this section, we let $F$ denote a non-archimedean local field of characteristic zero, with ring of integers of $\OO_F$ and $\varpi \in \OO_F$ a uniformizer such that $|\varpi| = q^{-1}$ for $|\cdot| = |\cdot|_F$ being the usual absolute value on $F$, and $q$ is the cardinality of residue field. We will fix $\psi$ an additive character of $F$ which is unramified, i.e. it is trivial on $\OO_F$ but non-trivial on $\varpi^{-1} \OO_F$. We extend $\psi$ to $\psi_{G}$ an unramified character on $N_G(F)$ for $G \in \{\GL_n(F), \GSpin_{2n+1}(F), \GSO_{2n}(F)\}$. Similarly, we will denote $\chi,\mu: F^\tm \to \C$ be unramified characters such that $\chi(\OO_F^\tm) = \mu(\OO_F^\tm) =1$ but $\chi(\varpi) \neq 1$ and $\mu(\varpi) \neq 1$. We will also denote $\Phi_k^\circ$ to be the characteristic function of $\Mat_{1,k}(\OO_F)$. Lastly, we denote $f_w^\circ \in \Ind_{P(F)}^{\GSO_4(F)} \D_P^s$ to be the normalised spherical section such that $f_w^\circ(k) = f_w^\circ(I_4) =1$ for $k \in \GSO_4(\OO_F)$.

Let $(\pi_n, V_{\pi_n})$, $(\tau_n, V_{\tau_n})$ and $(\sigma_n, V_{\sigma_n})$ denote an irreducible admissible unramified $\psi$-generic representation of $\GL_n(F)$ (resp. $\GSpin_{2n+1}(F)$ and $\GSO_{2n}(F)$). By a standard result of unramified representations, we can assume that 
\begin{align*}
    \pi_n =& \Ind_{B_{\GL_n}(F)}^{\GL_n(F)} \D_{B_{\GL_n}(F)}^{1/2} \alpha,&&
    \tau_n = \Ind_{B_{\GSpin_{2n+1}}(F)}^{\GSpin_{2n+1}(F)} \D_{B_{\GSpin_{2n+1}(F)}}^{1/2} \beta,&&
    \sigma_n = \Ind_{B_{\GSO_{2n}}(F)}^{\GSO_{2n}(F)} \D_{B_{\GSO_{2n}}(F)}^{1/2} \gamma,
\end{align*}
for unramified characters $\alpha,\beta,\gamma$ of the maximal split torus $T_{G}(F)$ for $G = \GL_n,\GSpin_{2n+1}$ and $\GSO_{2n}$ respectively. Moreover, by Satake isomorphism the representations $\pi_n, \tau_n$ and $\sigma_n$ are determined by its Satake parameters $t_{\pi_n}, t_{\tau_n}$ and $t_{\sigma_n}$, a semi-simple conjugacy class in the dual group $\GL^\vee \cong \GL_n(\C), \GSpin_{2n+1}^\vee \cong \GSp_{2n}(\C)$ and $\GSO_{2n}^\vee \cong \GSpin_{2n}(\C)$ respectively. Explicitly, the Satake parameters $t_{\pi_n}$ and $t_{\tau_n}$ have the matrix representation
 \begin{align*}
     &t_{\pi_n} = \diag(\alpha_1(\varpi),\dots, \alpha_n(\varpi)),\\
     &t_{\tau_n} = \diag(\beta_1(\varpi), \dots, \beta_n(\varpi), \beta_n^{-1}\beta_0(\varpi), \dots, \beta_1^{-1}\beta_0(\varpi)),
 \end{align*}
for $\alpha = \alpha_1 \otimes \cdots \alpha_n$ and $\beta = \beta_0 \otimes \beta_1 \otimes \cdots \beta_n$. Let $W_{\pi_n}^\circ \in \calW(\pi_n, \psi_{\GL_n})$, $W_{\tau_n}^\circ \in \calW(\tau_n, \psi_{\GSpin_{2n+1}})$ and $W_{\sigma_n}^\circ \in \calW(\sigma_n, \psi_{\GSO_{2n}})$ denote the normalised unramified Whittaker function. By the Iwasawa decomposition, these Whittaker functions are completely determined by its values on dominant elements of the torus $T_G(F)$. Here, we say $t \in T_G(F)$ is dominant if $|\alpha(t)|\leq1$ for all simple roots $\alpha$. The Casselman-Shalika \cite{CS} and Shintani \cite{Sh} formula evaluates these normalised unramified Whittaker functions at dominant elements of $T_G(F)$ and express them in terms of characters of finite-dimensional representations $G^\vee$.

 We recall that irreducible finite-dimensional representations of $\GL_n(\C)$ and $\Sp_{2n}(\C)$ are classified by their highest weights, parametrised by $\underline{k} = (k_1,\dots,k_n) \in \Z^m$ for $k_1 \geq k_2 \geq \cdots \geq k_n\geq0$. We will let $\rho^{G}(k_1,\dots, k_n)$ denote the irreducible finite-dimensional representation of highest weight $(k_1,\dots, k_n)$ for $G \in \{\GL_n(\C), \Sp_{2n}(\C)\}$. Furthermore, irreducible finite-dimensional representation of $\GSp_{2n}(\C)$ can be written as
 \begin{align*}
     \rho^{\GSp_{2n}(\C)}(\underline{k}; k_0)(g) = 
     \mu(g)^{k_0/2} \rho^{\Sp_{2n}(\C)}(\underline{k})(\bar g),
 \end{align*}
 for $k_0 \in \Z$, $\mu(g)$ is the similitude character of $g \in \GSp_{2n}(\C)$ and $\bar g = \mu(g)^{-1/2} g \in \Sp_{2n}(\C)$. Furthermore, we will also denote $\chi^{\GL_n(\C)}(\underline{k})(g)$ (resp. $\chi^{\GSp_{2n}(\C)}(\underline{k};k_0)(g)$)to be the character of the irreducible finite-dimensional representation $\rho^{\GL_n(\C)}(\underline{k})$ (resp. $\rho^{\GSp_{2n}(\C)}(\underline{k};k_0)$) evaluated at $g \in \GL_n(\C)$ (resp. $g \in \GSp_{2n}(\C)$). With these, the Casselman-Shalika \cite{CS} and Shintani \cite{Sh} formula for these unramified Whittaker functions are given 
 \begin{align*}
     W_{\pi_n}^\circ(t_{\underline{k}; \GL_n}(\varpi)) =& \D_{B_{\GL_n}(F)}^{1/2}(t_{\underline{k}, \GL_n}(\varpi)) \chi^{\GL_n(\C)}(\underline{k})(t_{\pi_n}),\\
     W_{\tau_n}^\circ(t_{\underline{k};\GSpin_{2n+1}}(\varpi)) =&  \D_{B_{\GSpin_{2n+1}}(F)}^{1/2}(t_{\underline{k};\GSpin_{2n+1}}(\varpi))
     \chi^{\GSp_{2n}(\C)}(\underline{k}; \sum_{i=1}^n k_i)(t_{\tau_n}),
 \end{align*}
for $t_{\underline{k}; \GL_n}(\varpi)$ (resp. $t_{\underline{k};\GSpin_{2n+1}}(\varpi)$) is the dominant element in $T_{\GL_n}(F)$ (resp. $T_{\GSpin_{2n+1}}(F)$)
\begin{align*}
    t_{\underline{k}; \GL_n}(\varpi) =& \diag(\varpi^{k_1}, \varpi^{k_2},\dots, \varpi^{k_n}),&&
    t_{\underline{k};\GSpin_{2n+1}}(\varpi) = e_1^\ast(\varpi^{k_1}) e_2^\ast(\varpi^{k_2})\cdots e_n^\ast(\varpi^{k_n}).
\end{align*}
For brevity, we will also drop the notation $(t_{\pi_n})$ (resp. $(t_{\tau_n})$) when the context is clear.
On the other hand, let $\omega_i$ denote the $i$-th fundamental representation of $\GSpin_{2n}(\C)$. For $k_1,\dots, k_n \geq0$ we denote $(k_1,\dots, k_n)$ to be the character of the irreducible representation $k_1 \omega_1 + \cdots + k_n \omega_n$ evaluated at $t_{\sigma_n}$, and by the Casselman-Shalika \cite{CS} formula, we have a similar identity. Specifically, for representations $\sigma_4$ and $\sigma_5$ of $\GSO_8(F)$ and $\GSO_{10}(F)$ respectively, where we assume $\sigma_4$ to have trivial central character (i.e. we regard $\sigma_4$ as a representation of $\PGSO_8(F)$), we have
\begin{align*}
    W_{\sigma_4}^\circ(t_{(k_1,k_2,k_3,k_4);\GSO_8}(\varpi)) =& 
    \D_{B_{\GSO_8}(F)}^{1/2}(t_{(k_1,k_2,k_3,k_4);\GSO_8}(\varpi))
    (k_1,k_2,k_3,k_4),\\
    W_{\sigma_5}^\circ(t_{(k_1,k_2);\GSO_{10}}(\varpi)) = &
    \D_{B_{\GSO_{10}}(F)}^{1/2}(t_{(k_1,k_2);\GSO_{10}}(\varpi))
    (k_2,0,0,0,k_1).
\end{align*}
for $k_i \geq0$, the elements $t_{(k_1,k_2,k_3,k_4);\GSO_8}(\varpi)$ and $t_{(k_1,k_2);\GSO_{10}}(\varpi)$ are given by
\begin{align*}
    t_{(k_1,k_2,k_3,k_4);\GSO_8}(\varpi) =& 
    \diag(\varpi^{k_1+k_2+k_3+k_4}, 
            \varpi^{k_2+k_3+k_4},
            \varpi^{k_3+k_4},
            \varpi^{k_4},
            \varpi^{k_3},
            1,
            \varpi^{-k_2},
            \varpi^{-k_1-k_2}),\\
    t_{(k_1,k_2);\GSO_{10}}(\varpi) =& \diag(\varpi^{k_1+2k_2},
    \varpi^{k_1+k_2},\varpi^{k_1+k_2},\varpi^{k_1+k_2},\varpi^{k_1+k_2},
    \varpi^{k_2},\varpi^{k_2},\varpi^{k_2},\varpi^{k_2},1).
\end{align*}
Next, we will state some lemmas that will be used in the computation of unramified local integrals.
\begin{lem}
    \label{lem: G(a,s,chi) function}
    Consider the auxiliary function $G(a,s,\chi)$ defined on $F^\tm$ given by
\begin{align} \label{eqn: G(a,s,chi) function}
    G(a,s,\chi) = \int_F \psi(y) F_2(w_2\begin{pmatrix}
        1 & y\\ & 1
    \end{pmatrix}\begin{pmatrix}
        a & \\ & 1
    \end{pmatrix}, s, \chi;\Phi_2^\circ)\,dy.
\end{align}
Assuming $\operatorname{Re}(s)\gg0$, the function $G(a,s,\chi)$ has the closed formula
\begin{align*}
    G(a,s,\chi) = |a|^{1-s} \chi^{-1}(a) \sum_{r=0}^{\ord (a)} (\chi(\varpi) q^{1-2s} )^r
\end{align*}
if $|a|\leq1$ and zero otherwise, for $a = \e_a \varpi^{\ord(a)}$ where $\e_a \in \OO_F^\tm$ and $\ord(a) \in \Z$.
\end{lem}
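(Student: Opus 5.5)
We substitute the definition \eqref{eqn: Godement section GLn} of the Godement section $F_2$ with $\Phi_2^\circ$ the characteristic function of $\Mat_{1,2}(\OO_F)$, and compute directly. Put $g = w_2 n(y)\diag(a,1)$. Since $w_2 = \left(\begin{smallmatrix} & 1\\ 1 & \end{smallmatrix}\right)$, the bottom row of $g$ is $e_2 g = (a, y)$, and $|\det g| = |a|$. Hence
\begin{align*}
    F_2(g,s,\chi;\Phi_2^\circ) = |a|^{s}\int_{F^\tm} \Phi_2^\circ(ta, ty)\,\chi(t)|t|^{2s}\,d^\tm t .
\end{align*}
Inserting this into $G(a,s,\chi)$ and exchanging the order of integration, which is justified by absolute convergence for $\operatorname{Re}(s)\gg0$, we obtain
\begin{align*}
    G(a,s,\chi) = |a|^s\int_{F^\tm}\chi(t)|t|^{2s}\,\mathbf 1_{\OO_F}(ta)\int_F \psi(y)\,\mathbf 1_{\OO_F}(ty)\,dy\,d^\tm t .
\end{align*}

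The inner $y$-integral is elementary. The condition $ty\in\OO_F$ means $y\in t^{-1}\OO_F$. Because $\psi$ is unramified, trivial on $\OO_F$ but not on $\varpi^{-1}\OO_F$, we get $\int_{t^{-1}\OO_F}\psi(y)\,dy = |t|^{-1}$ when $|t|\geq 1$ and $0$ otherwise. This uses the self-dual measure with $\vol(\OO_F)=1$. The constraint $ta\in\OO_F$ gives $|t|\le |a|^{-1}$.

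Together the two conditions force $1\le|t|\le|a|^{-1}$. In particular $G$ vanishes unless $|a|\le 1$, which gives the vanishing statement. For $|a|\le 1$, write $t = \epsilon\varpi^{-r}$ with $0\le r\le \ord(a)$. Since $\chi$ is unramified and $d^\tm t$ gives $\OO_F^\tm$ volume one, we get
\begin{align*}
    G(a,s,\chi) = |a|^s\sum_{r=0}^{\ord(a)}\chi(\varpi)^{-r} q^{r(2s-1)} .
\end{align*}

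The last step is to reindex via $r\mapsto \ord(a)-r$. Then $\chi(\varpi)^{-\ord(a)} = \chi^{-1}(a)$ and $q^{\ord(a)(2s-1)} = |a|^{1-2s}$, which turns the sum into $|a|^{1-s}\chi^{-1}(a)\sum_{r=0}^{\ord(a)}(\chi(\varpi)q^{1-2s})^r$, as claimed.

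The only real care needed is bookkeeping of conventions. These are: which entry of $e_2 g$ carries $a$ versus $y$, given the paper's $w_2$ and row-vector convention $e_n g$; the normalisation of $dy$ and $d^\tm t$; and the sign in the exponent of $\chi$. I expect the main obstacle is matching these conventions so that the final form has $\chi^{-1}(a)$ and the ratio $\chi(\varpi)q^{1-2s}$ exactly as stated. If a discrepancy appears, say $\chi$ versus $\chi^{-1}$, I would recheck the definition $I_n(s,\chi)=\Ind\,\D^s\otimes\chi^{-1}$ against \eqref{eqn: Godement section GLn}.
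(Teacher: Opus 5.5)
Your proposal is correct and follows the paper's proof essentially step for step. You substitute the Godement section to get $|a|^s\int \chi(t)|t|^{2s}\Phi_2^\circ((ta,ty))\,d^\times t$, evaluate the $y$-integral using the unramified $\psi$ (the paper's identity $\int_{|y|\le q^N}\psi(y)\,dy=q^N$ for $N\le 0$ and $0$ otherwise), arrive at the paper's intermediate sum $|a|^s\sum_{r=0}^{\mathrm{ord}(a)}(q^{2s-1}\chi^{-1}(\varpi))^r$, and finish with the same change of variables $r\mapsto \mathrm{ord}(a)-r$; your checks of the conventions (bottom row $(a,y)$, measure normalisations, and $\chi$ versus $\chi^{-1}$) are all consistent with the stated formula.
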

\begin{proof}
    This identity is a direct consequence of the orthogonality of $\psi$. For convenience of the reader, we will sketch a proof. By the definition of $F_2$, we have
    \begin{align*}
        G(a,s,\chi) = \int_F \psi(y) \int_{F^\tm} |ab^2|^s \chi(b) \Phi_2^\circ((ab,by))\,d^\tm b\, dy.
    \end{align*}
    Then, using the definition of $\Phi_2^\circ$ and the basic identity $\int_{|x| \leq q^N} \psi(y)\, dy = q^N$ for $N \leq 0$ and zero otherwise, we have
    \begin{align*}
        G(a,s,\chi) = |a|^s \sum_{k=0}^{\ord(a)} (q^{-1+2s} \chi^{-1}(\varpi))^k,
    \end{align*}
    and the identity follows from a simple change of variables.
\end{proof}
\begin{lem}
    \label{lem: Schur polynomial identities}
    Let $n\geq2$ and for $k,j\geq0$, we denote $\bar k = (k,0,\dots,0)$ and $\overline{j} = (j,0,\dots, 0)$ in $\Z^n$. We have the following identities for the character $\chi^{\GL_n(\C)}$ and $\chi^{\Sp_{2n}(\C)}$ of $\GL_n(\C)$ and $\Sp_{2n}(\C)$,
    \begin{align*}
        \chi^{\GL_n(\C)}(\bar k)(t_{\pi_n})\chi^{\GL_n(\C)}(\overline j)(t_{\pi_n}) =& \sum_{t=0}^{\min(k,j)} \chi^{\GL_n(\C)}(\max(k,j)+t,\min(k,j)-t,0,\dots,0)(t_{\pi_n}),
    \end{align*}
    and for $k,j\geq1$, $\chi^{\Sp_{2n}(\C)}(\bar k)(\overline{t_{\tau_n}})\chi^{\Sp_{2n}(\C)}(\bar j)(\overline{t_{\tau_n}})$ evaluates to 
    \begin{align*}
        \chi^{\Sp_{2n}(\C)}(\overline{k-1})(\overline{t_{\tau_n}})
        \chi^{\Sp_{2n}(\C)}(\overline{j-1})(\overline{t_{\tau_n}}) + \sum_{p=0}^{\min(k,j)} \chi^{\Sp_{2n}(\C)}(k+j-p,p,0,\dots,0)(\overline{t_{\tau_n}}).
    \end{align*}
\end{lem}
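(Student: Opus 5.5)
The first identity is the Pieri rule for $\GL_n(\C)$. For the second I will use the Koike--Terada determinant for symplectic characters and a telescoping sum. In both parts I work with symmetric functions in the eigenvalues of the Satake parameter.

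\textbf{The $\GL_n(\C)$ identity.} Write $x_1,\dots,x_n$ for the eigenvalues of $t_{\pi_n}$. By the Weyl character formula, $\chi^{\GL_n(\C)}(\bar k)(t_{\pi_n})=h_k(x_1,\dots,x_n)$, the complete homogeneous symmetric polynomial of degree $k$. The Pieri rule gives
\begin{align*}
h_kh_j=\sum_{\lambda} s_\lambda,
\end{align*}
where $\lambda$ runs over the partitions of $k+j$ obtained from the row $(\max(k,j))$ by adding $\min(k,j)$ boxes with no two in the same column. These are exactly the two-row partitions $(k+j-b,b)$ with $0\le b\le \min(k,j)$. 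Setting $b=\min(k,j)-t$ gives the parts $(\max(k,j)+t,\min(k,j)-t)$ with $0\le t\le\min(k,j)$, which is the stated sum. Since $n\ge 2$, every such $s_\lambda$ is the character of a genuine irreducible representation of $\GL_n(\C)$.

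\textbf{The $\Sp_{2n}(\C)$ identity: setting up.} Let $x_1^{\pm1},\dots,x_n^{\pm1}$ be the eigenvalues of $\overline{t_{\tau_n}}\in\Sp_{2n}(\C)$. The representation $\rho^{\Sp_{2n}(\C)}(\bar k)=\mathrm{Sym}^k(\std)$ is irreducible, so its character is $h_k(x^{\pm1})$, with $h_k$ taken in the $2n$ variables $x^{\pm1}$. For a two-row weight $(a,b)$ with $a\ge b\ge 0$, the Koike--Terada (symplectic Jacobi--Trudi) formula gives
\begin{align*}
\chi^{\Sp_{2n}(\C)}(a,b,0,\dots,0)=h_a\bigl(h_b+h_{b-2}\bigr)-h_{b-1}\bigl(h_{a+1}+h_{a-1}\bigr),
\end{align*}
with $h_m=0$ for $m<0$. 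This is valid because $n\ge 2$.

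\textbf{The $\Sp_{2n}(\C)$ identity: telescoping.} By symmetry assume $k\ge j\ge 1$. Substitute the formula above into $\sum_{p=0}^{j}\chi^{\Sp_{2n}(\C)}(k+j-p,p,0,\dots,0)$. Write $S_p=h_{k+j-p}h_p$ and $T_p=h_{k+j-p+1}h_{p-1}$. The $p$-th summand then equals
\begin{align*}
(S_p-T_p)+(h_{k+j-p}h_{p-2}-h_{k+j-p-1}h_{p-1}).
\end{align*}
The differences $S_p-T_p$ telescope, and so do the shifted differences, which are the same construction with total degree $k+j-2$. After telescoping, the sum should reduce to $h_kh_j-h_{k-1}h_{j-1}$. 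This is precisely the claimed identity, since $\chi^{\Sp_{2n}(\C)}(\overline{k-1})\chi^{\Sp_{2n}(\C)}(\overline{j-1})=h_{k-1}h_{j-1}$.

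As a cross-check, I would compare with the known decomposition
\begin{align*}
\mathrm{Sym}^k\otimes\mathrm{Sym}^j=\bigoplus_{r=0}^{j}\bigoplus_{p=0}^{j-r}V_{(k+j-2r-p,\,p)}
\end{align*}
for $\Sp_{2n}(\C)$. In this decomposition the $r=0$ terms give $\sum_{p=0}^{j}\chi(k+j-p,p)$, and the terms with $r\ge1$, after reindexing $r\mapsto r-1$, reproduce the same decomposition of $\mathrm{Sym}^{k-1}\otimes\mathrm{Sym}^{j-1}$. That is exactly the recursive shape of the statement.

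\textbf{Main obstacle.} The delicate step is the boundary bookkeeping in the telescoping: the terms with negative indices, and the terms at $p=0,1$ and $p=j$ where $h_{p-1}$ or $h_{p-2}$ vanish. This is where the hypothesis $k,j\ge1$ enters. If the determinant route becomes messy, I would instead prove the tensor product decomposition by induction on $j$, using the branching $\mathrm{Sym}^j=\mathrm{Sym}^{j-1}\otimes\std-\mathrm{Sym}^{j-2}$ (valid in the representation ring of $\Sp_{2n}(\C)$; the $\GL$ relation $\mathrm{Sym}^{j-1}\otimes\std=\mathrm{Sym}^j\oplus\mathbb{S}_{(j-1,1)}$ does not apply) together with the Pieri rule for tensoring with $\std$.
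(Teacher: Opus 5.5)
Your proposal is correct. For the $\GL_n(\C)$ identity you do exactly what the paper does: the paper just invokes the Pieri rule for Schur polynomials.

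For the $\Sp_{2n}(\C)$ identity the paper gives no argument and only cites \cite[Proposition 3.1]{Ma}. You instead derive it from the two-row symplectic Jacobi--Trudi determinant $\chi^{\Sp_{2n}(\C)}(a,b,0,\dots,0)=h_a(h_b+h_{b-2})-h_{b-1}(h_{a+1}+h_{a-1})$. This formula applies because $n\ge 2$ and $k+j-p\ge p$ for $p\le\min(k,j)$.

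The telescoping you hedge on is immediate. With $S_p=h_{k+j-p}h_p$ you have $T_p=S_{p-1}$. With $S'_q=h_{k+j-2-q}h_q$, your second bracket is $S'_{p-2}-S'_{p-1}$. Hence
\begin{align*}
\sum_{p=0}^{j}\chi^{\Sp_{2n}(\C)}(k+j-p,p,0,\dots,0)=(S_j-S_{-1})+(S'_{-2}-S'_{j-1})=h_kh_j-h_{k-1}h_{j-1}.
\end{align*}
The boundary terms vanish because $h_{-1}=h_{-2}=0$, so there is no delicate bookkeeping. The hypothesis $k,j\ge1$ is needed only so that $\overline{k-1}$ and $\overline{j-1}$ are genuine highest weights.

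Your route gives a self-contained two-line verification of what the paper outsources. As your cross-check notes, it is equivalent to the decomposition of $\mathrm{Sym}^k\otimes\mathrm{Sym}^j$ for $\Sp_{2n}(\C)$. The citation is shorter but opaque.

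Discard your fallback: for $\Sp_{2n}(\C)$ with $n\ge 2$,
\begin{align*}
\mathrm{Sym}^{j-1}\otimes\std=\mathrm{Sym}^j\oplus V_{(j-1,1)}\oplus\mathrm{Sym}^{j-2}.
\end{align*}
So the recursion $\mathrm{Sym}^j=\mathrm{Sym}^{j-1}\otimes\std-\mathrm{Sym}^{j-2}$ is false; it is the $\SL_2$ Clebsch--Gordan rule. For $\Sp_4$ and $j=2$ it predicts dimension $16-1=15$ instead of $10$. The main argument does not need it.
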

\begin{proof}
    The identity on the character $\chi^{\GL_n(\C)}$ of $\GL_n(\C)$ is a basic result of Schur's polynomials and whereas the proof of the second identity involving the character $\chi^{\Sp_{2n}(\C)}$ of $\Sp_{2n}(\C)$ can be found in \cite[Proposition 3.1]{Ma}.
\end{proof}
Next, we will recall some results on the decomposition of the symmetric algebras given in \cite{BR, Leahy, Br}. 
\begin{lem}
    \label{lem: Cauchy identities}
    Let $\pi_n$ (resp. $\tau_n$ and $\sigma_n$) be irreducible admissible unramified representation of $\GL_n(F)$ (resp. $\GSpin_{2n+1}(F)$ and $\GSO_{2n}(F)$). For $X = q^{-s}$ and $Y = q^{-w}$ where $\operatorname{Re}(s), \operatorname{Re}(w) \gg0$, we have the following identities.
    \begin{enumerate}
        \item [(a)] The local standard tensor $L$-function $L(s,\tau_n \tm  \pi_2)$ for $n\geq2$ can be written as the formal series
        \begin{align*}
            L(s,\tau_n \tm  \pi_2) = \sum_{m_1,m_2,m_3\geq0} &X^{m_1+2m_2+2m_3} (\omega_{\pi_2}\omega_{\tau_n})(\varpi)^{m_3} \\
            &\cdot\chi^{\GL_2(\C)}(m_1+m_2,m_2)\chi^{\GSp_{2n}(\C)}(m_1+m_2,m_2,0,\dots,0;m_1+2m_2).
        \end{align*}

        \item [(b)] The local standard tensor $L$-function $L(s, \tau_2 \tm \pi_m)$ for $m\geq4$ can be written as the formal series
        \begin{align*}
            L(s, \tau_2 &\tm \pi_m) = \sum_{n_1,\dots, n_6 \geq0} X^{n_1+2n_2+3n_3+2n_4+4n_5+4n_6} \\
            \cdot&\chi^{\GSp_4(\C)}(n_1+n_2+n_3+n_5,n_2+n_5; n_1+2n_2+3n_3+2n_4+4n_5+4n_6)\\
            \cdot&\chi^{\GL_m(\C)}(n_1+n_2+n_3+n_4+2n_5+n_6, n_2+n_3+n_4+n_5+n_6, n_3+n_5+n_6, n_6,0,\dots,0).
        \end{align*}

        \item [(c)] The local standard $L$-function $L(s, \tau_n \tm \pi_3)$ for $n\geq2$ can be written as the formal series
        \begin{align*}
            L(s, \tau_n &\tm \pi_3)= \sum X^{n_1+2n_2+2n_3+3n_4+4n_5+3n_6} \omega_{\pi_3}(\varpi)^{n_4+n_5+n_6} \omega_{\tau_n}(\varpi)^{n_3+n_4+n_5} \\
            \cdot&\chi^{\GL_3(\C)}(n_1+n_2+n_3+n_5, n_2 + n_3, 0)\\
            \cdot&\chi^{\GSp_{2n}(\C)}(n_1+n_2+n_4+n_5+n_6, n_2+n_5+n_6, n_6,0,\dots,0; n_1 + 2n_2 + n_4 + 2n_5 + 3n_6),
        \end{align*}
        where the series above is defined over $\{n_1,\dots, n_6 \geq0\}$ for $n \geq 3$ and $\{n_1,\dots, n_5 \geq0, n_6=0\}$ for $n=2$.

        \item [(d)] Let $L(w, \sigma_4,\std)$ (resp. $L(s,\sigma_4,\Spin)$) be the local standard $L$-function (resp. local spin $L$-function corresponding to the fourth fundamental representation) of $\Spin_8(\C)$. The product $L(s, \sigma_4,\std)L(w,\sigma_4,\Spin)$ can be written as the formal series
        \begin{align*}
            L(s, \sigma_4, \std) L(w, \sigma_4, \Spin) 
            =\zeta_F(2s) \zeta_F(2w) \sum_{n_1,n_2,n_3=0}^\infty (n_1,0,n_3,n_2) X^{n_1+n_3} Y^{n_2+n_3},
        \end{align*}
        for $\zeta_F(2s) = (1-q^{-2s})^{-1}$ and $\zeta_F(2w) = (1-q^{-2w})^{-1}$.

        \item [(e)] The local odd $\Spin$ L-function $L(s,\sigma_5, \Spin)$ of $\sigma_5$ as defined in \cite[\textsection 3.1]{Gin95a} can be written as the formal series
        \begin{align*}
           L(s,\sigma_5, \Spin) =  \sum_{n_1,n_2\geq0} (n_2,0,0,0,n_1) X^{n_1+2n_2} \omega_{\sigma_5}(\varpi)^{n_2}.
        \end{align*}
    \end{enumerate}
\end{lem}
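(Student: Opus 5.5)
The plan is to treat all five parts uniformly through the Cauchy-type decomposition of symmetric algebras. The starting point is the identity
\begin{align*}
L(s,\Pi,\tau)=\det(1-\tau(t_\Pi)X)^{-1}=\sum_{k\geq0}\operatorname{tr}\operatorname{Sym}^k\tau(t_\Pi)\,X^k,
\end{align*}
valid as a formal power series in $X=q^{-s}$ for $\operatorname{Re}(s)\gg0$. Each part then reduces to decomposing $\operatorname{Sym}^k(V)$ into irreducibles of the relevant dual group and reading off characters. The multiplicity-free property of $(G^\vee,\tau)$ from Table~\ref{Table: Multiplicity-free repn} guarantees that $\operatorname{Sym}^\bullet(V)$ is a multiplicity-free sum whose highest weights form a free monoid. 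For each case it therefore suffices to identify the monoid generators, namely the invariant-theoretic ``basic covariants'' of degrees $1,2,\dots$, and then to translate highest weights into the paper's $\chi^{\GL}$ and $\chi^{\GSp}(\,\cdot\,;k_0)$ normalisations.

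\emph{Parts (a)--(c).} Take $\tau=\std_{2n}\otimes\std_m$ of $\GSp_{2n}\times\GL_m$. The classical result of Howe, Brion and Leahy (\cite{BR,Leahy,Br}) gives
\begin{align*}
\operatorname{Sym}(\C^{2n}\otimes\C^m)=\bigoplus_{\lambda}\rho^{\GL_{2n}}(\lambda)\otimes\rho^{\GL_m}(\lambda).
\end{align*}
We restrict $\rho^{\GL_{2n}}(\lambda)$ to $\Sp_{2n}$ using Littlewood's branching rule, which is multiplicity free in the cases at hand. Contractions by the symplectic form produce extra free generators that are $\Sp$-invariant. Each such generator carries a factor of the similitude, which is recorded as the similitude-weight $k_0$, and a power of $\det$ on the $\GL_m$ side, which appears as powers of $\omega_{\pi}$ or $\omega_{\tau}$ under Satake.
\begin{enumerate}
\item [(a)] For $m=2$ the generators are the two $\Sp$-nontrivial weights of degrees $1,2$ and one invariant of degree $2$, namely the contraction $\wedge^2\C^2\otimes(\text{symplectic form})$. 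This produces the exponent $m_1+2m_2+2m_3$ and the factor $(\omega_{\pi_2}\omega_{\tau_n})^{m_3}$.
\item [(b)] For $\GSp_4\times\GL_m$ we list the six generators of degrees $1,2,3,2,4,4$, including the degree-$2$ contraction $\Lambda^2$ and the degree-$4$ $\det$-type covariants. The index bookkeeping is then a direct check.
\item [(c)] For $\GL_3$ the generators have degrees $1,2,2,3,4,3$. The generator $n_6$ disappears when $n=2$, because $\rho^{\Sp_4}$ admits no third row.
\end{enumerate}
To check the claimed generator lists, I would compare the graded dimension or character on a generic torus element in low degrees. Alternatively, I would verify each list against the degree-$k$ piece using Lemma~\ref{lem: Schur polynomial identities} inductively.

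\emph{Parts (d) and (e).} For (d), $\std\oplus\Spin$ of $\Spin_8$ gives $\operatorname{Sym}(\std)\otimes\operatorname{Sym}(\Spin)$. Each factor decomposes as $\operatorname{Sym}^k(\std)=\bigoplus_{j}(k-2j)\omega_1$ (harmonic decomposition), and similarly for $\Spin$ with $\omega_4$; the $j$-sums produce $\zeta_F(2s)$ and $\zeta_F(2w)$. It remains to show
\begin{align*}
\sum_{a,b}(a\omega_1)\otimes(b\omega_4)X^aY^b=\sum_{n_i}(n_1,0,n_3,n_2)X^{n_1+n_3}Y^{n_2+n_3}.
\end{align*}
This follows from the tensor product rule $V_{a\omega_1}\otimes V_{b\omega_4}=\bigoplus_{c\le\min(a,b)}V_{(a-c)\omega_1+c\,\omega_3+(b-c)\omega_4}$, which is a Pieri-type statement via triality; I would either cite it or prove it by a Weyl character computation. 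For (e), the half-spin representation of $\GSpin_{10}$ is the $\GL_1\times\Spin_{10}$ prehomogeneous space, and its symmetric algebra is $\bigoplus V_{n_1\omega_5}\cdot Q^{n_2}$, where $Q$ is the degree-$2$ covariant with highest weight $\omega_1$. This yields the weights $(n_2,0,0,0,n_1)$ in degree $n_1+2n_2$, with the central character appearing through the $\GL_1$ scaling.

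\emph{Main obstacle.} I expect the hardest step to be (b) and (c): pinning down exactly the generating covariants with their correct $\GSp$ similitude weights, and matching them to the paper's $\chi^{\GSp_{2n}}(\underline k;k_0)$ normalisation, including the central character twists. Degree-by-degree dimension checks would guard against errors there.
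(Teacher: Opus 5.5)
Your plan is the same as the paper's. The paper proves the lemma only by citing the decomposition of $\operatorname{Sym}(V)$ for these multiplicity-free spaces (Leahy, Brion, \cite[Equation 8.12]{ACS}, \cite[Lemma 3.3]{GH}). Your derivations of (a), (c), (e), and of (d) via the harmonic decomposition plus $V_{a\omega_1}\otimes V_{b\omega_4}=\bigoplus_{c\le\min(a,b)}V_{(a-c)\omega_1+c\omega_3+(b-c)\omega_4}$, are sound.

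There is one step that fails as you state it: the restriction $\GL_4\downarrow\Sp_4$ by Littlewood's rule in (b).
\begin{itemize}
\item In (b), $\lambda$ runs over partitions with up to four rows. Littlewood's restriction rule is valid only for $\ell(\lambda)\le n$.
\item It still works for $\ell(\lambda)=n+1$, but only because non-standard $\Sp_{2n}$-characters of length $n+1$ vanish. That is why your treatment of (c) with $n=2$ is unaffected.
\item For $\ell(\lambda)=4$ it gives the wrong answer. Take $\lambda=(1^4)$, i.e.\ $\det$, which is trivial on $\Sp_4$. Discarding the non-standard terms leaves $\rho^{\Sp_4}(1,1)\oplus\rho^{\Sp_4}(0)$, of dimension $6$ rather than $1$.
\item This is exactly where the generator $n_6$, of weight $(0,0)\otimes(1,1,1,1)$, lives. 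So carrying out your computation as written would produce a wrong generator list.
\end{itemize}

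You can repair this step in any of three ways:
\begin{itemize}
\item Use King's (Koike--Terada) modification rules; here $\{1^4\}_{\Sp_4}=-\{1,1\}_{\Sp_4}$, which cancels the spurious term.
\item Better, use Gelfand--Tsetlin branching $\SL_4\cong\Spin_6\downarrow\Spin_5\cong\Sp_4$. It is multiplicity free for every $\lambda$, which is the real reason the $\Sp_4\times\GL_m$ decomposition is multiplicity free. It also directly produces the two degree-$4$ generators $(1,1)\otimes(2,1,1,0)$ and $(0,0)\otimes(1^4)$.
\item Simply cite the $\Sp_4\otimes\GL_m$ case from Brion's or Leahy's tables, as the paper does.
\end{itemize}

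Separately, your proposed low-degree dimension checks can only certify a generator list once you also know the rank of the (free) weight monoid. That rank is $3$ in (a), and $6$ in (b) and in (c) for $n\ge 3$. A free monoid of rank $r$ has exactly $r$ indecomposables, so finding $r$ of them in low degree settles it; without the rank, finitely many checks cannot rule out a missing generator in higher degree.
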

\begin{proof}
    The identities $(a)$-$(c)$ can be derived from classification of multiplicity-free representations \cite{Leahy,Br}. This can also be derived from \cite[Equation 8.12]{ACS}. Moreover, the identity in $(d)$ follows from \cite[Lemma 5.8]{Leahy} and also \cite[Lemma 3.3]{GH}, while the identity in $(e)$ is due to \cite{Br}.
\end{proof}
Finally, we will state some vanishing results of the unramified Whittaker functions.
\begin{lem}
    \label{lem: Vanishing Whittaker}
    Let $k\geq1, j\geq 3$ and $n\geq 3$, also set $j_- = \lfloor (j-1)/2 \rfloor$ and $j_+ = j -j_-$. Let $t \in T_{\GL_k}(F)$, $(t_1,t_2) \in T_{G(\SL_2\tm\SL_2)}(F)$ and $t_3 \in T_{\GSpin_6}(F)$ be given and let $\pi_{k+j}$, $\tau_n$, $\sigma_4$ and $\sigma_5$ be irreducible admissible unramified generic representations of $\GL_{k+j}(F)$, $\GSpin_{2n+1}(F)$, $\GSO_8(F)$ and $\GSO_{10}(F)$ respectively. Let $W_{\pi_{k+j}}^\circ, W_{\tau_n}^\circ$, $W_{\sigma_4}^\circ$ and $W_{\sigma_5}^\circ$ be their unramified Whittaker functions. We have the following.
    \begin{enumerate}
        \item [(a)] The function 
        \begin{align*}
        \Mat_{j_-, k}(F) \to \C; && Y \mapsto W_{\pi_{k+j}}^\circ(\begin{pmatrix}
            t &\\
             & I_j
        \end{pmatrix}\begin{pmatrix}
            I_k &&\\
            Y & I_{j_-}&\\
            &&I_{j_+}
        \end{pmatrix})
    \end{align*}
    has support in $\Mat_{j_-,k}(\OO_F)$.
        \item [(b)] For $n\geq3$, the function 
        \begin{align*}
        \Mat_{n-2,2}(F) \to \C;&& B \mapsto 
    W_{\tau_n}^\circ( j_{n}(t_1,t_2)\LL(\begin{smallmatrix}
        I_2 &&&&\\
        B&I_{n-2}&&&\\
        &&1&&\\
        &&&I_{n-2}&\\
        &&&B'&I_2
    \end{smallmatrix}\RR)),
        \end{align*}
    has support in $\Mat_{n-2,2}(\OO_F)$.

        \item [(c)] For $n\geq 4$, the function 
        \begin{align*}
        \Mat_{n-3,3}(F) \to \C; && B \mapsto 
    W_{\tau_n}^\circ(j_{n}(t_3)\LL(\begin{smallmatrix}
        I_3 &&&&\\
        B&I_{n-3}&&&\\
        &&1&&\\
        &&&I_{n-3}&\\
        &&&B'&I_3
    \end{smallmatrix}\RR))
        \end{align*}
        has support in $\Mat_{n-3,3}(\OO_F)$.

    \item [(d)] Let $x_{-\alpha_4}(r) = I_8 + r(E_{53} - E_{64})$ and $t = \diag(t_1,1,1,t_2, t_3, t_2t_3, t_2t_3, t_1^{-1}t_2t_3) \in \GSO_8(F)$, the function
    \begin{align*}
        F \to \C; && r \mapsto W_{\sigma_4}^\circ(t x_{-\alpha_4}(r))
    \end{align*}
    has support in $\OO_F$.

    \item [(e)] Let $\mathbf z(z_1,z_2,z_3,z_4) = I_{10} + z_1(E_{2,1} - E_{10,9}) +z_2(E_{6,1} - E_{10,5}) + z_3(E_{6,3} - E_{8,5}) + z_4(E_{6,4} - E_{7,5})$ in $\GSO_{10}(F)$ and recalling the map $J_{D_5}: \GL_2 \to \GSO_{10}$ given in \eqref{eqn: J-D5}, the function
    \begin{align*}
        F\tm F \tm F \tm F \to \C; && (z_1,z_2,z_3,z_4) \mapsto W_{\sigma_5}^\circ(J_{D_5}(\diag(a_1a_2,a_2)  \mathbf z(z_1,z_2,z_3,z_4))
    \end{align*}
    has support in $\OO_F \tm \OO_F \tm \OO_F \tm \OO_F$.
    \end{enumerate}
\end{lem}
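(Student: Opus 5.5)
The plan is the standard equivariance argument: if $W$ is a Whittaker function for a character $\psi$ and $u$ lies in a root subgroup, then $W(gu)=W(gug^{-1}\cdot g)=\psi(gug^{-1})W(g)$ whenever $gug^{-1}\in N_G$. We exploit this after conjugating the lower-triangular variable past a suitable upper-triangular unipotent element. The model case is (a); parts (b)–(e) then follow by adapting it to each group.

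For (a), fix $Y\in\Mat_{j_-,k}(F)$ and a root entry $Y_{pq}$. Choose the upper triangular unipotent $u_{pq}(x)=I+xE_{q,k+p+1}$, with $x\in\OO_F$. We use that $W^\circ$ is right $K$-invariant, so $W^\circ(g)=W^\circ(g\,u_{pq}(x))$. Commuting $u_{pq}(x)$ to the left through the lower-triangular matrix $\bar Y=\left(\begin{smallmatrix} I_k&&\\ Y&I_{j_-}&\\ &&I_{j_+}\end{smallmatrix}\right)$ produces:
\begin{itemize}
\item the unipotent element $I+x\,(\text{column }q\text{ of }Y\text{-shifted})$, which lies in $N_{\GL_{k+j}}$;
\item elements inside the lower block, which are absorbed using the $K$-invariance and a change of variables in $Y$.
\end{itemize}
We then move these past $\diag(t,I_j)$, which only rescales the upper entries among the last $j$ coordinates trivially. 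The key point is that, because $j_-<j_+$, the index $k+p+1$ stays inside the $I_j$ block, and the entry $E_{k+p,k+p+1}$ is a simple root there. The resulting identity has the form
$$W^\circ(\diag(t,I_j)\bar Y)=\psi(xY_{pq}+\cdots)\,W^\circ(\diag(t,I_j)\bar Y).$$
Averaging over $x\in\OO_F$ then forces $Y_{pq}\in\OO_F$ on the support.

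We run this inductively on the rows of $Y$, starting from the row adjacent to the $I_{j_+}$ block, since its simple-root character is unobstructed. Each step reduces the remaining rows to the same situation. This is essentially the argument of \cite[\textsection 8]{ACS} and of Jacquet–Shalika type support lemmas.

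For (b) and (c), we do the same in $\SO_{2n+1}$ coordinates, using that unipotents of $\GSpin_{2n+1}$ are identified with those of $\SO_{2n+1}$. The torus element $j_n(t_1,t_2)$ (resp.\ $j_n(t_3)$) lives in the $\GSpin_4$ (resp.\ $\GSpin_6$) Levi and commutes with the middle $I_{n-k}$ block. The simple roots $e_{k+i}-e_{k+i+1}$ and $e_n$ there therefore carry nontrivial character, and the hypotheses $n\geq3$, $n\geq4$ guarantee that the block has the needed size. Conjugating $x_{e_{k+i}-e_{k+i+1}}(x)$, together with the short root $x_{e_n}(x)$ for the last row, across $B$ row by row yields the bound $B\in\Mat_{n-k,k}(\OO_F)$.

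Parts (d) and (e) are single-root or few-root versions of the same argument.
\begin{itemize}
\item For (d), we commute $x_{\alpha_4}(x)$, $x\in\OO_F$, past $x_{-\alpha_4}(r)$ using the $\SL_2$ relation. If $|r|>1$, we write $x_{-\alpha_4}(r)=x_{\alpha_4}(r^{-1})\,\alpha_4^\vee(r^{-1})\,w\,x_{\alpha_4}(r^{-1})$ and use $K$-invariance. This gives $W^\circ(t x_{\alpha_4}(r^{-1})h)$, and the Casselman–Shalika vanishing of $W^\circ$ off dominant elements then shows the value vanishes, since the torus $r^{-1}$-twist makes $t$ non-dominant at $\alpha_4$. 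This uses the special shape of $t$, namely that $\alpha_4(t)=1$.
\item For (e), we proceed likewise with the four roots $-\alpha_1$, $-(\alpha_1+\alpha_2+\alpha_3+\alpha_5)$, $-\alpha_5$ and $\alpha_3-\alpha_5$. We handle them in the order $z_1,z_3$ (simple) first, then $z_4$, then $z_2$, conjugating positive roots $\alpha_1,\alpha_5,\ldots$ across.
\end{itemize}

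The main obstacle is (e), and to a lesser extent (d). For these one must check that the commutators produced when moving the positive root element across $\mathbf z$ land in $N_G$ with only controlled contributions to $\psi$, and that $J_{D_5}(\diag(a_1a_2,a_2))$ does not kill the relevant simple-root character. I would verify these commutator computations explicitly in the $\GSO_{10}$ matrix model, following \cite[Proposition 2.5]{Gin95a}.
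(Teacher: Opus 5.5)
Your overall mechanism is the right one, and it is what the cited sources and the paper's sketch of (d) use: right $K$-invariance, then moving a suitable upper unipotent past the lower-triangular variable into $N_G$ and reading off a simple-root character on which the torus is trivial. Your (a) works as described, provided each step first absorbs the already-integral lower rows of $Y$ into $K$.

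\textbf{Part (d).} The key verification is false as written. With $x_{\alpha_4}(r)=I_8+r(E_{35}-E_{46})$ and $t=\diag(t_1,1,1,t_2,t_3,t_2t_3,t_2t_3,t_1^{-1}t_2t_3)$, one has $\alpha_4(t)=t_3^{-1}$, not $1$. Moreover, after your Bruhat step, $\alpha_4\bigl(t\,\alpha_4^\vee(r^{-1})\bigr)=\alpha_4(t)\,r^{-2}$ has \emph{smaller} absolute value when $|r|>1$. So the twist makes the torus more dominant at $\alpha_4$, not less.

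The vanishing actually comes from $\alpha_2$. Since $\langle\alpha_2,\alpha_4^\vee\rangle=-1$, you get $\alpha_2\bigl(t\,\alpha_4^\vee(r^{-1})\bigr)=\alpha_2(t)\,r=r$. The special feature of $t$ is $\alpha_2(t)=1$, because entries $2$ and $3$ are both $1$. With this correction your route (Bruhat decomposition in the $\alpha_4$-$\SL_2$, then dominance of the support of $W^\circ$ on the torus) is a valid alternative.

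The paper instead right-translates by $x_{\alpha_2+\alpha_4}(z)$ with $z\in\OO_F$. Its commutator with $x_{-\alpha_4}(r)$ is $x_{\alpha_2}(\pm rz)$, which gives directly $W^\circ(tx_{-\alpha_4}(r))=\psi(-rz)W^\circ(tx_{-\alpha_4}(r))$. Both arguments hinge on $\alpha_2(t)=1$.

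\textbf{Parts (b) and (c).} The elements you name do not work.
\begin{itemize}
\item Translating by $x_{e_n}(x)$ gives nothing. It involves only the indices $n,n+1,n+2$, none of which meets the rows or columns of $B$ or $B'$ (since $k<n$), so it commutes with the lower-triangular matrix $\bar B$.
\item Translating by $x_{e_{k+i}-e_{k+i+1}}(x)$ does not help either. Its $\bar B$-conjugate contains the lower-triangular terms $-x\sum_c B_{i+1,c}E_{k+i,c}$, or commutes with $\bar B$ once row $i+1$ has been killed, so you never get a character identity involving row $i$.
\end{itemize}
The correct analogue of your $u_{pq}$ from (a) is $x_{e_q}(x)$ for the last row and $x_{e_q-e_{k+i+1}}(x)$ for row $i$, with $q\le k$. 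Their $\bar B$-conjugates lie in $N$ and pick up $x_{e_n}(\pm xB_{n-k,q})$, respectively $x_{e_{k+i}-e_{k+i+1}}(\pm xB_{iq})$ after the lower rows have been absorbed into $K$. Moreover $j_n(t_1,t_2)$ and $j_n(t_3)$ are trivial on these simple roots.

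\textbf{Part (e).} The same objection applies. $x_{\alpha_1}$ and $x_{\alpha_5}$ generate an $\SL_2$ with the corresponding negative root groups, exactly as $x_{\alpha_4}$ does in (d). You need positive roots $\beta$ such that $\beta$ plus the root of the $z$-variable is one of $\alpha_2,\alpha_3,\alpha_4$, the simple roots trivial on $J_{D_5}(\diag(a_1a_2,a_2))$; for example $\beta=\alpha_1+\alpha_2$ for $z_1$. These must be processed in an order for which the commutators with the remaining $z$-factors land in $N$ or can be absorbed by a change of variables in $Z$. That bookkeeping is the content of \cite[(3.8)]{Gin95a}, which the paper simply cites.
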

\begin{proof}
    A proof of $(a)$ can be found in \cite[\textsection 12]{S2} and a proof of $(b)$ and $(c)$ is analogous to that of \cite[\textsection 4]{G}. Statement $(d)$ can be derived from the proof of \cite[Proposition 3.1]{GH} while $(e)$ is precisely \cite[Equation 3.8]{Gin95a}. For convenience of the reader, we will sketch a proof for identity $(d)$. Let $x_{\alpha_2 + \alpha_4}(z) = I_8 + z(E_{25} - E_{47})$ and $x_{\alpha_2}(y) = I_8 + y(E_{23} - E_{67})$ in $\GSO_8(F)$. Then, for $|z|\leq1$ we have
    \begin{align*}
        W_{\sigma_4}^\circ(t x_{-\alpha_4}(r)) =& W_{\sigma_4}^\circ(t x_{-\alpha_4}(r) x_{\alpha_2 + \alpha_4}(z))\\
        =&\psi(-rz) W_{\sigma_4}^\circ(t x_{-\alpha_4}(r)).
    \end{align*}
    Thus, by the assumption $\psi(\OO_F) =1$ and $\psi(\varpi^{-1})\neq 1$, we must have $|r|\leq1$ for $r$ in the support.
\end{proof}

\subsection{Unramified computation of local unramified integrals}
In this subsection, we will evaluate unramified local integrals given in Corollary \ref{cor: Euler factorisation} for unramified integration data $W^\circ,\Phi^\circ,\chi,\mu$ defined in Section \ref{sec: Preliminaries for unramified computation}. For these unramified local integrals, we will show that they evaluate to the $L$-functions representing the multiplicity-free representations given in Table \ref{Table: Multiplicity-free repn} as expected by \cite{BZSV,MWZ}. More precisely, we have the following theorem.
\begin{thm}\label{thm: unramified comp}
    Following the notations in Lemma \ref{lem: Cauchy identities}, for $\operatorname{Re}(s), \operatorname{Re}(w)\gg0$ (and $\operatorname{Re}(w) \gg \operatorname{Re}(s)\gg0$), the unramified local integrals $\calZ(W^\circ,s;\Phi^\circ)$ defined in 
    Corollary \ref{cor: Euler factorisation} with unramified integration data $W^\circ,\Phi^\circ,\chi,\mu$ are evaluated as follows.
    \begin{enumerate}
        \item [(a)] The unramified local integrals $\calZ_{a,b}^{\GSpin\tm\GL}(W^\circ,s;\Phi^\circ)$ for $(a,b) \in \{(m,2), (2,n), (m,3)\}$ evaluate to 
        \begin{align*}
            \calZ_{a,b}^{\GSpin\tm\GL}(W^\circ,s;\Phi^\circ) =& L(s, \tau_a \tm \pi_b).
        \end{align*}

        \item [(b)] The unramified local integrals $\calZ_{n}^{G}(W^\circ,s,w,\chi,\mu;\Phi^\circ)$ for $G \in \{\GL,\GSpin\}$ evaluate to
        \begin{align*}
            \calZ_{n}^{\GL}(W^\circ,s,w,\chi,\mu;\Phi^\circ)=& L(s,\pi_n \tm \mu) L(w, \pi_n \tm \chi),\\
            \calZ_{n}^{\GSpin}(W^\circ,s,w,\chi,\mu;\Phi^\circ)=& L(s, \tau_n \tm \mu) L(w, \tau_n \tm \chi).
        \end{align*}

        \item [(c)]  The unramified local integrals $\calZ_{D_4}(W^\circ,s,w;\Phi^\circ)$ and $\calZ_{D_5}(W^\circ,s,w;\Phi^\circ)$ evaluate to
        \begin{align*}
            \calZ_{D_4}(W^\circ,s,w;\Phi^\circ) =& L(s, \sigma_4, \Spin) L(w, \sigma_4, \std),\\
             \calZ_{D_5}(W^\circ,s,w;\Phi^\circ) =& L(s, \sigma_5, \Spin).
        \end{align*}

        \item [(d)] The unramified local integrals $\calZ_{m,n}^{G_1,G_2}(W^\circ,s,w;\Phi^\circ)$ for $G_1,G_2 \in \{\GL,\GSpin\}$ evaluate to
        \begin{align*}
            \calZ_{m,n}^{\GL,\GL}(W^\circ,s,w;\Phi^\circ) =& L(w, \pi_m \tm \pi_2') L(s, \pi_2' \tm \pi_n''),\\
            \calZ_{m,n}^{\GL,\GSpin}(W^\circ,s,w;\Phi^\circ)=& L(w, \pi_m \tm \pi_2') L(s,\pi_2' \tm \tau_n),\\
            \calZ_{m,n}^{\GSpin,\GSpin}(W^\circ,s,w;\Phi^\circ)=& L(w, \tau_m \tm \pi_2') L(s, \pi_2' \tm \tau_n').
        \end{align*}
    \end{enumerate}
\end{thm}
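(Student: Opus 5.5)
The proof will follow the standard unramified-computation template, run uniformly over all the local integrals of Corollary \ref{cor: Euler factorisation}: reduce each integral to a sum over dominant torus elements, then match that sum with the Cauchy-type expansions of Lemma \ref{lem: Cauchy identities}.

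\textbf{Step 1 (reduction to the torus).} In each local integral we first use the Iwasawa decomposition $H(F)=N_H(F)T_H(F)H(\OO_F)$ for the integration group $H$ (resp. its appropriate quotient). With unramified data $W^\circ,\Phi^\circ,f^\circ_w$, the integrand is right $H(\OO_F)$-invariant. This needs one check: the Weil representation of the hyperspecial maximal compact fixes $\Phi^\circ$, which holds because $\psi$ is unramified and the splittings are trivial on $H(\OO_F)$ at almost all places.

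\textbf{Step 2 (collapsing the auxiliary unipotent integrals).} The inner integrals over $B$, $X$, $r$, $z$ appearing in $\mathcal U_{[\cdot]}$ and in $\calZ_{D_4},\calZ_{D_5}$ are collapsed using the support statements of Lemma \ref{lem: Vanishing Whittaker}. Once the torus element is moved to the left, each of these variables is forced into $\OO_F$, where the integrand is constant. Each such integral therefore contributes only a volume factor of $1$, together with a modulus-character factor from the conjugation.

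\textbf{Step 3 (evaluating the remaining data).} For the multivariable integrals $\calZ^G_n$, the $\GL_2$ integral is handled by the Bruhat decomposition, and the Eisenstein section $F_2(w_2\cdot)$ integrated against $\psi$ is evaluated by Lemma \ref{lem: G(a,s,chi) function}. For the remaining integrals, the Schwartz functions $\Phi^\circ(e_k g)$, $\Phi^\circ_{12}(e_{12}\gamma(\cdot))$ and $\omega_\psi(\cdot)\Phi^\circ(\xi_0)$ or $\Phi^\circ(\eta_0)$ are evaluated on the torus using \eqref{eqn: formula Weil - Levi Siegel}. Each restricts the exponents of the torus entries to a cone and contributes powers of $q^{-s}$ (and of $\omega(\varpi)$) linear in those exponents.

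\textbf{Step 4 (matching with the $L$-functions).} We now substitute the Casselman--Shalika--Shintani formula for every Whittaker function. The $\delta^{1/2}$ factors should cancel exactly against the Haar-measure Jacobian $\delta_{B_H}^{-1}$ and the modulus factors from Step 2; we verify this by a direct exponent count in each case. This cancellation is the structural content of "strong temperedness".

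What remains in each case is a lattice sum of products of characters.
\begin{itemize}
\item For (a), after reparametrising the torus exponents by $m_i$ or $n_i$ (the generic-orbit stabilisers from Lemma \ref{lem: group action} and Lemma \ref{lem: Double coset decomp} dictate which highest weights pair up), the sum is identified with Lemma \ref{lem: Cauchy identities}(a)--(c).
\item For (b), Lemma \ref{lem: G(a,s,chi) function} produces a geometric sum $\sum_r$. Combining it with the first identity of Lemma \ref{lem: Schur polynomial identities} (the $\GL_n$ case) or its $\Sp_{2n}$ analogue gives the product of two standard $L$-functions, each of which is a generating function $\sum_k X^k\chi(\bar k)$.
\item For (c), the sums are matched with Lemma \ref{lem: Cauchy identities}(d),(e). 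For $D_4$, the factor $\zeta_F(2w)$ in $f^\ast_w$ and the Tate-type integral $\int|\det g_1|^s\Phi_2(e_2g_1)$ supply the $\zeta_F(2s)\zeta_F(2w)$.
\item For (d), the glued integrals factor once we integrate over the middle $\GL_2$. The generic orbit $\eta_0$ forces the diagonal identification $g_2\leftrightarrow g_3^\ast$, so the integral becomes two copies of the $\GL_2\times G$ computation in (a)/(b), coupled through $W_{\pi_2'}$. We then use the Whittaker orthogonality/Shintani identity $\sum_{k}\chi(\underline k)(t_1)\chi(\underline k)(t_2)X^{|k|}$ for $\GL_2$, which is the Cauchy identity for $\GL_2\times\GL_2$, to separate the two $L$-factors.
\end{itemize}

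The main obstacle will be the bookkeeping in Step 4 for $\calZ^{\GSpin\tm\GL}_{2,n}$ and $\calZ^{\GSpin\tm\GL}_{m,3}$. There, the support of $\Phi^\circ_{12}$ on the $12$-dimensional space cuts out a polyhedral cone of torus exponents, and this cone must be reparametrised bijectively onto the six free parameters $n_1,\dots,n_6$ of Lemma \ref{lem: Cauchy identities}(b),(c). The approach is to write $\xi_0\cdot\Phi_1(t)$ explicitly via the minors $M(\cdot,2)$ and read off the inequalities. If the cone is not simplicial, we will decompose it and use Lemma \ref{lem: Schur polynomial identities} to recombine the terms.
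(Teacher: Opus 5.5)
Your plan for (a)--(c) follows the paper's route: Iwasawa decomposition, Lemma \ref{lem: Vanishing Whittaker}, the Weil formula \eqref{eqn: formula Weil - Levi Siegel}, Casselman--Shalika--Shintani, and then Lemmas \ref{lem: G(a,s,chi) function}, \ref{lem: Schur polynomial identities} and \ref{lem: Cauchy identities}. The gap is in (d). The glued local integral does not factor, and the identity you propose for decoupling it is the wrong one.

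After the Iwasawa reduction, take the torus $t(a)=(\diag(a_1a_2,a_2),\diag(a_3,(a_1a_2^2a_3a_4a_5^2)^{-1}),\diag(a_4a_5,a_5))$ and set $e_i=\operatorname{ord}(a_i)$. You get a single five-fold sum. In it, $W^\circ_{\pi_m}$ and $W^\circ_{\pi_n''}$ give $\chi^{\GL_m(\C)}(\underline k)(t_{\pi_m})$ and $\chi^{\GL_n(\C)}(\underline l)(t_{\pi_n''})$, with $\underline k=(e_1+e_2,e_2)$ and $\underline l=(e_4+e_5,e_5)$. The single function $W^\circ_{\pi_2'}$, evaluated at $\diag(a_1a_2^2a_3a_4a_5^2,a_3^{-1})$, gives $\chi^{\GL_2(\C)}(k_1+l_1-p,\,k_2+l_2+p)(t_{\pi_2'})$ with $p=-e_2-e_3-e_5$. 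The support conditions of the torus sum are the paper's region $|a_2|,|a_5|,|a_2a_3a_5|^{-1},|a_2a_3a_4a_5|,|a_1a_2a_3a_5|\le 1$ together with dominance. They say exactly that $0\le p\le\min(e_1,e_4)$.

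The sum over the middle variable $a_3$ is then collapsed by the $\GL_2$ Clebsch--Gordan (Pieri) rule at the \emph{single} Satake parameter $t=t_{\pi_2'}$:
\[
\chi^{\GL_2(\C)}(\underline k)(t)\,\chi^{\GL_2(\C)}(\underline l)(t)=\sum_{p=0}^{\min(k_1-k_2,\,l_1-l_2)}\chi^{\GL_2(\C)}(k_1+l_1-p,\,k_2+l_2+p)(t).
\]
This is Lemma \ref{lem: Schur polynomial identities} for $n=2$ after twisting by powers of $\det$, and it is what the paper uses. Only after this step does the sum split as $\bigl(\sum_{\underline k}\chi(\underline k)(t_{\pi_m})\chi(\underline k)(t_{\pi_2'})q^{-w|\underline k|}\bigr)\bigl(\sum_{\underline l}\chi(\underline l)(t_{\pi_2'})\chi(\underline l)(t_{\pi_n''})q^{-s|\underline l|}\bigr)$. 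Each factor is then identified by the Cauchy identity for $\GL_m\times\GL_2$, resp.\ $\GL_2\times\GL_n$, or by Lemma \ref{lem: Cauchy identities}(a) on a $\GSpin$ side.

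The Cauchy identity for $\GL_2\times\GL_2$ that you invoke, $\sum_{\underline k}\chi(\underline k)(t_1)\chi(\underline k)(t_2)X^{|\underline k|}$, pairs one weight at two \emph{different} Satake parameters. It cannot turn one character at $t_{\pi_2'}$ into a product of two characters at $t_{\pi_2'}$. So as written, your step (d) does not produce $L(w,\pi_m\times\pi_2')L(s,\pi_2'\times\pi_n'')$.

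Relatedly, the identification of two $\GL_2$-arguments by $\operatorname{Stab}(\eta_0)$ was already used up in the global unfolding; it is what produced the domain $U^\Delta_{1,2}U_3\backslash S(\GL_2^{\times 3})$. In the local integral, $\omega_\psi(\rho(\cdot))\Phi_4^\circ(\eta_0)$ contributes only powers of $|\cdot|$ and support conditions, not a further reduction.

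A smaller omission concerns the residual unipotent integrals over $U'\backslash N_H$. These are $u(x)=(n(x),I_2,I_2)$ here, $Y(y_1,y_2,y_3)$ in $\calZ^{\GSpin\tm\GL}_{2,n}$, and $X(x_1,x_2,x_3)$ in $\calZ^{\GSpin\tm\GL}_{m,3}$. They are not covered by Lemma \ref{lem: Vanishing Whittaker}. Like Lemma \ref{lem: G(a,s,chi) function}, each integrates a Whittaker character against a translated $\Phi^\circ$, and these integrals contribute to the cone inequalities you need in Step 4.
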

\begin{proof}
 We will omit the details for the computation of the unramified local integral \linebreak $\calZ_{m,2}^{\GSpin\tm\GL}(W^\circ, s;\Phi^\circ)$ as it is a direct consequence of Lemmas \ref{lem: Cauchy identities}, \ref{lem: Vanishing Whittaker} and the Casselman-Shalika \cite{CS} and Shintani \cite{Sh} formula for $W_{\tau_m}^\circ$ and $W_{\pi_2}^\circ$. Next, we consider the unramified local integral $\calZ_{2,n}^{\GSpin\tm\GL}(W^\circ,s;\Phi^\circ)$. By Iwasawa decomposition, we have 
        \begin{align*}
            S'(\GSp_4 \tm \GL_4)(F) = N_{S'(\GSp_4 \tm \GL_4')}(F) \cdot T_{S'(\GSp_4 \tm \GL_4')}(F)\cdot S'(\GSp_4 \tm \GL_4)(\OO_F),
        \end{align*}
        where we parametrise $U'_{\GSp_4 \tm \GL_4}(F)\sm N_{S'(\GSp_4 \tm \GL_4')}(F)$ and $T_{S'(\GSp_4 \tm \GL_4')}(F)$ as
        \begin{align*}
            &Y(y_1,y_2,y_3) =(I_4, \LL(\begin{smallmatrix}
            1 & y_1 & y_2 & \\
            &1&y_3&\\
            &&1&\\
            &&&1
        \end{smallmatrix}\RR)),\\
        &t(a) = (
        \diag(a_1a_3^{-1}a_4^{-2}a_5^{-3}a_6^{-2},
        a_2a_3^{-1}a_4^{-2}a_5^{-3}a_6^{-2},
        a_2^{-1}a_6^{-2},
        a_1^{-1}a_6^{-2}), \LL(\begin{smallmatrix}
            a_3a_4a_5a_6&&&\\
            &a_4a_5a_6&&\\
            &&a_5a_6&\\
            &&&a_6
        \end{smallmatrix}\RR))
        \end{align*}
        respectively. From the formula of the Weil representation \eqref{eqn: formula Weil - Levi Siegel} and Lemma \ref{lem: Vanishing Whittaker} the unramified local integral becomes
    \begin{align*}
        \int 
        |a_1|^{-2} &|a_2|^{-1} |a_3|^{s+\frac{4-n}{2}} |a_4|^{2s+5-n} |a_5|^{3s+\frac{18-3n}{2}} |a_6|^{4s-\frac{16-4n}{2}}
        \\
\cdot&W_{\tau_2}^\circ(\diag(a_1a_6^2,a_2a_6^2,a_2^{-1}a_3a_4^2a_5^3a_6^2,a_1^{-1}a_3a_4^2a_5^3a_6^2)
        ) W_{\pi_n}^\circ(j_{n,4}(\LL(\begin{smallmatrix}
            a_3a_4a_5a_6&&&\\
            &a_4a_5a_6&&\\
            &&a_5a_6&\\
            &&&a_6  
        \end{smallmatrix}\RR))\,d^\tm a_i,
    \end{align*}
    where the integral above is defined over
    \begin{align*}
        \LL\{ a_i \in F^{\tm} : \begin{matrix}
            |a_6|\leq1, |a_1^{-1}a_3a_4^2a_5^2| \leq1, a_2^{-1}a_3a_4a_5^2|\leq1,|a_1a_3^{-1}a_4^{-1}a_5^{-2}|\leq1,\\
            |a_2a_3^{-1}a_4^{-1}a_5^{-1}|\leq1,
            |a_2a_4^{-1}a_5^{-2}|\leq1, |a_1a_4^{-1}a_5^{-2}|\leq1
        \end{matrix} \RR\}.
    \end{align*}
    Then, by the Casselman-Shalika \cite{CS} and Shintani \cite{Sh} formula  for $W_{\tau_2}^\circ$ and $W_{\pi_n}^\circ$ and Lemma \ref{lem: Cauchy identities}, we obtain the desired identity for $\calZ_{2,n}^{\GSpin\tm\GL}(W^\circ,s;\Phi^\circ)$. Moving on, we consider the unramified local integral $\calZ_{m,3}^{\GSpin\tm\GL}(W^\circ,s;\Phi^\circ)$ where here we are assuming $\omega_{\tau_m} \omega_{\pi_3} = \mathbf 1$. We will only provide details for $m\geq3$, the case of $m=2$ follows the same argument. By Iwasawa decomposition, we have
    \begin{align*}
        (\GSpin_6 \tm \GL_3)(F) = N_{\GSpin_6 \tm \GL_3}(F) \cdot T_{\GSpin_6 \tm \GL_3}(F) \cdot (\GSpin_6 \tm \GL_3)(\OO_F),
    \end{align*}
    where we parametrise $U'_{\GL_4 \tm \GL_3}(F) \sm N_{\GSpin_6 \tm \GL_3}(F)$ and $Z'(F)\sm T_{\GSpin_6 \tm \GL_3}(F)$ as
    \begin{align*}
        &X(x_1,x_2,x_3) = (e, \LL( \begin{smallmatrix}
            1 & x_1 & x_2\\ &1&x_3\\&&1
        \end{smallmatrix}\RR)), &&
        t(a) =(e_0^\ast(a_6) e_1^\ast(a_3a_4a_5) e_2^\ast(a_4a_5) e_3^\ast(a_5),\diag(1,a_2^{-1},a_1^{-1}a_2^{-1})),
    \end{align*}
    respectively. Here, we used $e$ to denote identity element $\GSpin_6(F)$.
    Note that
    \begin{align*}
        \operatorname{pr}(e_0^\ast(a_6) e_1^\ast(a_3a_4a_5) e_2^\ast(a_4a_5) e_3^\ast(a_5)) = \diag(a_3a_4^2a_5^3a_6, a_3a_4a_5a_6, a_4a_5a_6, a_5a_6)\in \GL_4'.
    \end{align*}
    Then, by the similar argument as above, the unramified local integral evaluates to
    \begin{align*}
        \int_{a_i \in \Om_m}
        |a_1|^{-1-2s} |a_2|^{-1-4s} |a_3|^{\frac{1}{2}-m+3s} &|a_4|^{2-2m+6s} |a_5|^{\frac{9}{2}-3m+6s} |a_6|^{6s}\\
        \cdot&W_{\tau_m}^\circ(j_m(e_0^\ast(a_6) e_1^\ast(a_3a_4a_5) e_2^\ast(a_4a_5) e_3^\ast(a_5)))
        W_{\pi_3}^\circ(\LL(\begin{smallmatrix}
                a_1a_2&&\\
                &a_2&\\
                &&1
            \end{smallmatrix}\RR))\,d^\tm a_i
    \end{align*}
    where
    \begin{align*}
        \Om_m =\LL\{a_i \in F^{\tm} :
            \begin{matrix}
                |a_1^{-1}a_2^{-1} a_3a_4a_5a_6|\leq1, |a_2^{-1}a_4a_5a_6|\leq1, |a_5a_6|\leq1,\\
                |a_1a_2a_4^{-1}a_5^{-1}a_6|\leq1, |a_2a_5^{-1}a_6^{-1}|\leq1, |a_1a_2a_5^{-1}a_6^{-1}|\leq1
            \end{matrix}
            \RR\}.
    \end{align*}
    Then, by the Casselman-Shalika \cite{CS} and Shintani \cite{Sh} formula for $W_{\tau_m}^\circ$ and $W_{\pi_3}^\circ$ and Lemma \ref{lem: Cauchy identities}, we obtain the desired identity for $\calZ_{m,3}^{\GSpin \tm \GL}(W^\circ,s;\Phi^\circ)$.
    Next, for the unramified local integral $\calZ_{n}^{G}(W^\circ,s,w,\chi,\mu;\Phi^\circ)$ for $G \in \{\GL,\GSpin\}$. Similarly, we will only provide details for $\calZ_{n}^{\GL}(W^\circ,s,w,\chi,\mu;\Phi^\circ)$ as the other case follows the same argument. Again, by Iwasawa decomposition we have
    \begin{align*}
        \GL_2(F) \cong N_{\GL_2}(F) \cdot T_{\GL_2}(F) \cdot \GL_2(\OO_F),
    \end{align*}
    with the parametrisation
    \begin{align*}
     N_{\GL_2}(F)=\{n(x)\mid x \in F\},&& 
    T_{\GL_2}(F)= \LL\{t(a_1,a_2)=\diag(a_1a_2,a_2)\mid a_i \in F^{\tm} \RR\},
\end{align*}
then the unramified local integral $\calZ_{n}^{\GL_n}(W^\circ,s,w,\chi,\mu;\Phi^\circ)$ evaluates to
\begin{align*}
        \int_{F^{\tm2}} \chi(a_1a_2) \mu(a_2) 
            |a_1|^{\frac{s+w-n}{2}} 
            |a_2|^{s+w+2-n}
            W_{\pi_n}^\circ(j_{n,2}(t(a_1,a_2)))
            G\LL( a_1, \frac{w-s+1}{2}, \chi\mu^{-1}\RR)\,d^\tm a_i,
    \end{align*}
where $G(a_1,(w-s+1)/2,\chi\mu^{-1})$ is the function defined in \eqref{eqn: G(a,s,chi) function}.
Then, by Lemma \ref{lem: G(a,s,chi) function}, the Casselman-Shalika-Shintani \cite{CS,Sh} formula for $W_{\pi_n}^\circ$ as well as Lemma \ref{lem: Schur polynomial identities}, we obtain the desired identity for $\calZ_{n}^{\GL_n}(W^\circ,s,w,\chi,\mu;\Phi^\circ)$. Next, we will evaluate the unramified local integrals $\calZ_{D_5}(W^\circ,s;\Phi^\circ)$ and $\calZ_{D_4}(W^\circ,s,w;\Phi^\circ)$. By the same argument as above, we can write the unramified local integral $\calZ_{D_5}(W^\circ,s;\Phi^\circ)$ as 
\begin{align*}
    \calZ_{D_5}(W^\circ,s;\Phi^\circ)=\int_{F^{\tm2}}
    |a_1|^{s-9/2} |a_2|^{2s-3}
    W^\circ(J_{D_5}(t(a_1,a_2)))\,d^\tm a_i,
\end{align*}
and given that $J_{D_5}(t(a_1,a_2))$ is exactly that of $j(\diag(a_1a_2,a_2))$ in \cite[Proposition 3.2]{Gin95a}, the unramified computation of $\calZ_{D_5}(W^\circ, s;\Phi^\circ)$ follows the same argument in \textit{loc. cit}. Similarly, for the integral $\calZ_{D_4}(W^\circ,s,w;\Phi^\circ)$ we proceed by Iwasawa decomposition to obtain 
\begin{align*}
    S(\GL_2\tm \GSO_4)(F) = N_{S(\GL_2\tm\GSO_4)}(F) \cdot T_{S(\GL_2\tm\GSO_4)}(F) \cdot {S(\GL_2\tm\GSO_4)}(\OO_F),
\end{align*}
with the parametrisation 
\begin{align*}
    T_{S(\GL_2\tm\GSO_4)}(F)= \LL\{ (\diag(a_0a_1,a_0)
    , 
    \diag(a_0^{-1}a_1^{-1}a_2,a_0^{-1}a_1^{-1}a_3,a_0^{-1}a_3^{-1},a_0^{-1}a_2^{-1})) \RR\}.
\end{align*}
Observe that 
\begin{align*}
    J_{D_4}((\diag(a_0a_1,a_0)
    , 
    \diag(a_0^{-1}a_1^{-1}a_2,a_0^{-1}a_1^{-1}a_3,a_0^{-1}a_3^{-1},a_0^{-1}a_2^{-1})) ) = 
    (a_0^{-1}I_8) t(a_1,a_2,a_3,a_4),
\end{align*}
where $t(a_1,a_2,a_3,a_4)$ is the element in $\GSO_8(F)$ given by
\begin{align*}
    t(a_1,a_2,a_3,a_4) = \diag(
    a_1^{-1}a_2,
    a_1^{-1}a_3,
    1,
    a_1^{-1},
    1,
    a_1^{-1},
    a_3^{-1},
    a_2^{-1}).
\end{align*}
Furthermore, we have
\begin{align*}
    w[42] x_{\alpha_2 + \alpha_4}(r) w[42]^{-1} =& x_{-\alpha_4}(r),\\
    w[42]t(a_1,a_2,a_3,a_4)w[42]^{-1} =& \diag(a_1^{-1}a_2,1,1,a_3^{-1},a_1^{-1}a_3, a_1^{-1},a_1^{-1},a_2^{-1}).
\end{align*}
Thus, using Lemma \ref{lem: Vanishing Whittaker} and the formula \eqref{eqn: formula Weil - Levi Siegel} for the Weil representation as well as the fact the $W_{\sigma_4}^\circ$ is $Z(\GSO_8)(F)$-invariant for $Z(\GSO_8)(F)$ is the center of $\GSO_8(F)$, 
the above integral becomes
\begin{align*}
    \zeta_F(2s)\zeta_F(2w) \int_{F^{\tm3}} 
    |a_1|^{s-w} |a_2|^{w-3} |a_3|^{w}
    W_{\sigma_4}^\circ(\diag(a_2,
    a_1,
    a_1,
    a_1a_3^{-1},
    1,
    1,
    a_1a_2^{-1}))\,d^\tm a_i.
\end{align*}
By the Casselman-Shalika \cite{CS} formula for $W_{\sigma_4}^\circ$ and Lemma \ref{lem: Cauchy identities}, we have the desired identity for $\calZ_{D_4}(W^\circ,s,w;\Phi^\circ)$. Finally, for the glued unramified local integrals $\calZ_{m,n}^{G_1,G_2}(W^\circ,s,w;\Phi^\circ)$ where $G_1,G_2 \in \{\GL, \GSpin\}$, we will only provide details for the evaluation of $\calZ_{m,n}^{\GL,\GL}(W^\circ,s,w;\Phi^\circ)$ as the other integrals follow the same argument. By Iwasawa decomposition, we have
\begin{align*}
    S(\GL_2^{\tm3})(F) = N_{S(\GL_2^{\tm3})}(F) \cdot T_{S(\GL_2^{\tm3})}(F) \cdot S(\GL_2^{\tm3})(\OO_F),
\end{align*}
where $U_{1,2}^\Delta(F) U_3(F) \sm N_{S(\GL_2^{\tm3})}(F)$ and $T_{S(\GL_2^{\tm3})}(F)$ have the parametrisation
\begin{align*}
    u(x) = (n(x), I_2, I_2), && t(a) = (\diag(a_1a_2,a_2), \diag(a_3,a_1^{-1}a_2^{-2}a_3^{-1}a_4^{-1}a_5^{-2}), \diag(a_4a_5,a_5)),
\end{align*}
respectively. By the same argument as above, the unramified local integral $\calZ_{m,n}^{\GL,\GL}(W^\circ,s,w;\Phi^\circ)$ evaluates to 
\begin{align*}
        \int 
        |a_1|^{w-m/2} &|a_2|^{2w-m}
        |a_3|^{-1} |a_4|^{s-n/2} |a_5|^{2s-n}\\
        \cdot&
        W_{\pi_m}^\circ(j_{m,2}(\begin{pmatrix}
            a_1a_2&\\&a_2
        \end{pmatrix}))
        W_{\pi_2'}^\circ(\begin{pmatrix}
            a_1a_2^2a_3a_4a_5^2&\\ & a_3^{-1}
        \end{pmatrix})
        W_{\pi_n''}^\circ(j_{n,2}(\begin{pmatrix}
            a_4a_5&\\ & a_5
        \end{pmatrix}))\,d^\tm a_i
    \end{align*}
where the integral is defined over $\{a_i \in F^\tm: |a_2|, |a_5|, |a_2a_3a_5|^{-1}, |a_2a_3a_4a_5|, |a_1a_2a_3a_5|\leq1\}$. By the Casselman-Shalika-Shintani \cite{CS,Sh} formula for $W_{\pi_m}^\circ,W_{\pi_2'}^\circ$ and $W_{\pi_n''}^\circ$ as well as Lemma \ref{lem: Schur polynomial identities}, we obtain the desired identity for $\calZ_{m,n}^{\GL,\GL}(W^\circ,s,w;\Phi^\circ)$.
\end{proof}
Thus from the above as well as Corollary \ref{cor: Euler factorisation}, we have part $(b)$ of Theorem \ref{thm: main thm}.

\section{Relation with previously studied Rankin-Selberg integrals}
\label{sec: Relation with previously studied Rankin-Selberg integrals}
In this section, we will discuss the relations of some of the period integrals constructed in Section \ref{sec: The Period integrals and their unfolding process} with some previously studied Rankin-Selberg integrals. These include the works of Bump \cite{Bump1} and Asgari, Cogdell and Shahidi \cite{ACS} on Rankin-Selberg integrals defined on $\GSpin \tm \GL$, and also the Ginzburg's \cite{Gin95a} Bessel-type Rankin-Selberg integral defined on $\GSO_{10}$. 

\subsection{Relation with Bump's and Asgari-Cogdell-Shahidi's Rankin-Selberg Integrals}
In this subsection, we will show the $\GSpin_{2m+1} \tm \GL_3 (m \geq2)$-period integral $\calP_{\calD,m,3}^{\GSpin\tm\GL}=\calP_{\calD,m,3}^{\GSpin\tm\GL}(\varphi, s;\Phi)$ defined in Section \ref{sec: Period Integrals on General Linear Groups and Spin Similitude Groups} is equivalent to that proposed by Asgari, Cogdell and Shahidi \cite{ACS} and Bump \cite{Bump1} for the $\GSp_4 \tm \GL_3$ integral. We shall provide details for $m\geq3$ as the same argument holds for when $m=2$. Assuming $m\geq 3$, in \cite{ACS} the authors constructed a global $\GSpin_{2m+1} \tm \GL_3$-period integral defined by
\begin{align}
    \label{eqn: ACS integral}
    \int_{Z(\A) \GSpin_6(k) \sm \GSpin_6(\A)} \calB_{[2m-5,1^6]}(\varphi_{\tau_m})(j_n(g)) E(g, f_{\pi_3,s})\,dg
\end{align}
where $\calB_{[2m-5,1^6]}(\varphi_{\tau_m})$ is the Bessel coefficient of $\varphi_{\tau_m}$ given in Section \ref{sec: Bessel coefficient GSpin} and $E(s, f_{\pi_3})$ is the cuspidal Eisenstein series of $\GSpin_6(\A)$ associated to the induced space  
\begin{align*}
    \rho_{\pi_3,s} = \Ind_{P(\A)}^{\GSpin_6(\A)}(\pi_3|\det|^s \otimes \omega_{\tau_m}^{-1}),
\end{align*}
where $P$ is the Siegel parabolic of $\GSpin_6$ defined by deleting the simple root $e_2 + e_3$. With these, we will show that the $\GSpin_{2m+1}\tm \GL_3$-period integral $\calP_{\calD,m,3}^{\GSpin\tm\GL}$ admits an inner integral which can be unfolded to the Eisenstein series $E(\cdot, f_{\pi_3,s})$ in \eqref{eqn: ACS integral}.

Recall we assumed that $\omega_{\tau_m} \omega_{\pi_3} = \mathbf 1$, and observe that exists an inner period integral in $\calP_{\calD,m,3}^{\GSpin\tm\GL}$ given by
\begin{align}
    \label{eqn: Inner integral ACS}
    \mathfrak{E}(\varphi_{\pi_3}, \Phi_{12},s; \omega_{\tau_m})(g_1) = \int_{Z_3(\A) \GL_3(k) \sm \GL_3(\A)}
    \varphi_{\pi_3}(g_2^\ast) E_{12}\LL( \operatorname{pr}(g_1) \otimes g_2, \frac{2s+1}{4}, \omega_{\tau_m}; \Phi_{12} \RR)\,dg_2,
\end{align}
such that 
\begin{align*}
    \calP_{\calD,m,3}^{\GSpin\tm\GL} = \int_{Z(\A) \GSpin_6(k) \sm \GSpin_6(\A)} \mathcal{B}_{[2m-5,1^6]}(\varphi_{\tau_m})(j_m(g_1))
    \mathfrak{E}(\varphi_{\pi_3}, \Phi_{12},s; \omega_{\tau_m})(g_1)\,dg_1.
\end{align*}
This inner integral was first introduced in \cite{GS} and further studied in \cite{Ha}. 
We will write $E_{12}(\operatorname{pr}(g_1) \otimes g_2)$ to denote $E_{12}\LL( \operatorname{pr}(g_1) \otimes g_2, \frac{2s+1}{4}, \omega_{\tau_n}; \Phi_{12} \RR)$, and likewise for $F_{12}(\operatorname{pr}(g_1) \otimes g_2)$. Then assuming $\operatorname{Re}(s) \gg0$, from the double coset decomposition of $P_{12} \sm \GL_{12} /(\GL_4' \otimes \GL_3)$ given in Lemma \ref{lem: Double coset decomp} we can write the above inner integral \eqref{eqn: Inner integral ACS} as
\begin{align*}
    \mathfrak{E}(\varphi_{\pi_3}, \Phi_{12},s; \omega_{\tau_n})(g_1) = \sum_{t=1}^3 \int \varphi_{\pi_3}(g_2^\ast)
    \sum_{\gamma \in (\GL_4' \otimes \GL_3)_{\gamma_t}(k) \sm (\GL_4' \otimes \GL_3)(k)} F_{12}(\gamma_t \gamma \operatorname{pr}(g_1) \otimes g_2)\,dg_2.
\end{align*}
Moreover, following the same argument in \cite[Proposition 3.13]{Ha} we see that the contributions from the summands represented by $\gamma_1$ and $\gamma_2$ evaluates to zero such that 
\begin{align}
    \label{eqn: inner integral 2}
    \mathfrak{E}(\varphi_{\pi_3}, \Phi_{12},s; \omega_{\tau_n})(g_1) = \int \varphi_{\pi_3}(g_2^\ast)
    \sum_{\gamma \in (\GL_4' \otimes \GL_3)_{\gamma_3}(k) \sm (\GL_4' \otimes \GL_3)(k)} F_{12}(\gamma_3 \gamma \operatorname{pr}(g_1) \otimes g_2)\,dg_2.
\end{align}
From the parametrisation of $(\GL_4' \otimes \GL_3)_{\gamma_3}$ we can factorise the summation given above as
\begin{align*}
    &\sum_{\gamma \in (\GL_4' \otimes \GL_3)_{\gamma_3}(k) \sm (\GL_4' \otimes \GL_3)(k)} F_{12}(\gamma_3 \gamma \operatorname{pr}(g_1) \otimes g_2) \\
    &=\sum_{\gamma \in (P_{1,3}' \otimes \GL_3)(k) \sm (\GL_4' \otimes \GL_3)(k)}
    \sum_{\gamma' \in (\GL_4' \otimes \GL_3)_{\gamma_3}(k) \sm (P_{1,3}' \otimes \GL_3)(k)}
    F_{12}(\gamma_3 \gamma' \gamma \operatorname{pr}(g_1) \otimes g_2),
\end{align*}
where $P_{1,3}' = P_{1,3} \cap \GL_4'$.
Furthermore, observe that representatives $\gamma$ of $(P_{1,3}' \otimes \GL_3)(k) \sm (\GL_4' \otimes \GL_3)(k)$ can be written as $\gamma = \eta_1 \otimes I_3$ for $\eta_1 \in P_{1,3}'(k) \sm \GL_4'(k)$ and representatives $\gamma'$ of $(\GL_4' \otimes \GL_3)_{\gamma_3}(k) \sm (P_{1,3}' \otimes \GL_3)(k)$ can be written as $\gamma' = \begin{pmatrix}
    \det^{-1} (M) & \\ & M 
\end{pmatrix} \otimes I_3$ for $M \in Z_3(k) \sm \GL_3(k)$. Therefore, \eqref{eqn: inner integral 2} becomes
\begin{align*}
     \mathfrak{E}(\varphi_{\pi_3}, \Phi_{12},s; \omega_{\tau_n})(g_1) = 
     \int \varphi_{\pi_3}(g_2^\ast) \sum_{\eta_1 \in P_{1,3}'(k) \sm \GL_4'(k)} \sum_{M \in Z_3(k) \sm \GL_3(k)}
     F_{12}(\gamma_3(\begin{pmatrix}
    \det^{-1} (M) & \\ & M 
\end{pmatrix}\eta_1\operatorname{pr}(g_1) \otimes g_2))\,dg_2,
\end{align*}
by the quasi-invariance of $F_{12}$ we see that for $M \in \GL_3(k)$ we have
\begin{align*}
    F_{12}(\gamma_3(\begin{pmatrix}
    \det^{-1} (M) & \\ & M 
\end{pmatrix}\eta_1\operatorname{pr}(g_1) \otimes g_2)) = F_{12}(\gamma_3(\eta_1\operatorname{pr}(g_1) \otimes (M^\ast)^{-1} g_2)),
\end{align*}
such that switching the order of the $dg_2$-integration with the $\eta_1$-summation, followed by collapsing the $M$-summation with the integral we obtain
\begin{align*}
    \mathfrak{E}(\varphi_{\pi_3}, \Phi_{12},s; \omega_{\tau_n})(g_1) =  \sum_{\eta_1 \in P_{1,3}'(k) \sm \GL_4'(k)}
    \xi(\varphi_{\pi_3}, \Phi_{12},s; \omega_{\tau_n})(\eta_1 g_1),
\end{align*}
where
\begin{align*}
    \xi(\varphi_{\pi_3}, \Phi_{12},s; \omega_{\tau_n})(g_1) = \int_{Z(\A) \sm \GL_3(\A)} \varphi_{\pi_3}(g_2^\ast) F_{12}(\gamma_3(\operatorname{pr}(g_1) \otimes g_2))\,dg_2.
\end{align*}
Now, using the same quasi-invariance argument on $F_{12}$ as in \cite[Proposition 3.16]{Ha}, we can conclude that $\xi(\varphi_{\pi_3}, \Phi_{12},s; \omega_{\tau_n}) \in \Ind_{P_{1,3}'(\A)}^{\GL_4'(\A)}(\mathbf 1 \otimes \pi_3) \D_{P_{1,3}'}^{(2s+1)/4}$ for $\D_{P_{1,3}'} = \D_{P_{1,3}}\vert_{P_{1,3}'}$. This recovers the construction of \cite{ACS} whose choice of the Siegel parabolic subgroup of $\GSpin_6$ agree with $P_{1,3}'\subset \GL_4'$ under the matrix group isomorphism for $\GSpin_6$ given in Section \ref{sec: Accidental Isomorphism of low rank GSpin groups}.

\subsection{Relation with Ginzburg's Rankin Selberg Integral}
In this subsection, we will show that the $\GSO_{10}$-period integral $\calP_{\calD,D_5}=\calP_{\calD,D_5}(\varphi,s;\Phi)$ given in Section \ref{sec: Period Integrals on Special Orthogonal Similitude Groups} is equivalent to the Bessel-type $\GSO_{10}$-period integral proposed by Ginzburg in \cite{Gin95a}. More specifically, using an analogous version of \cite[Lemma 2.5]{GW} we will show that these two period integrals can be regarded as (adelic version) elements of the same $\operatorname{Hom}$ space. The argument below was communicated by Wee Teck Gan. Any mistakes are the author's own. 

For convenience, we will consider the isometry group $\SO$ instead of the similitude group $\GSO$. Let $V$ be a split $10$-dimensional quadratic space over $F$ and let $\iota : \SL_2 \to \SO(V)$ be the homomorphism corresponding to the nilpotent orbit $[4^2,1^2]$. Thus, we have the decomposition as $\SL_2$-module: $V = W \oplus W' \oplus V_0$ where $W \cong W'$ is an irreducible symplectic 4-dimensional representation and $V_0$ affords two copies of the trivial representation. We remark that as an $\SO(V_0)$-module, $V_0$ is reducible. Then, we consider the ordered basis 
\begin{align*}
    \{e_3, e_3', e_1, e_1', v_0, v_0', e_{-1}, e_{-1}', e_{-3}, e_{-3}'\}
\end{align*}
for $V$ consisting of eigenvectors of $h = d\iota\begin{pmatrix}
    1 &\\ & -1
\end{pmatrix} \in \Lie(\SO(V))$ where the subscripts indicate their eigenvalues, and $V_0 =\langle v_0, v_0' \rangle$. Furthermore, for $i \in \{\pm 1, \pm 3\}$ we denote $X_i$ to be the two-dimensional space generated by $e_i, e_i'$. By the adjoint action of $h$, we have a grading on $\Lie(\SO(V))$ which affords a parabolic subalgebra $\mathfrak p = \mathfrak m \mathfrak u$ where $\mathfrak u = \bigoplus_{i\geq1} \mathfrak u_i$ is its nilpotent radical containing the subalgebra $\mathfrak u^+ = \bigoplus_{i\geq2} \mathfrak u_i$. Furthermore, using the ordered basis above, the Levi subalgebra is $\mathfrak m = \mathfrak{gl}(X_3) \tm \mathfrak{gl}(X_1) \tm \mathfrak{so}(V_0)$ and the subspace $\mathfrak{u}_1$ is given by $\mathfrak{u}_1 \cong \operatorname{Hom}(V_0, X_1) \cong V_0 \otimes X_1$. Thus, if we consider the corresponding groups $P = M U \supset U \supset U^+$, we have $P$ being the parabolic subgroup of $\SO(V)$ stabilising the flag $X_3 \subset X_3 \oplus X_1 \subset X_3 \oplus X_1 \oplus \langle v_0 \rangle$ since $V_0$ is reducible as $\SO(V_0)$-module, such that $P$ is contained in the Siegel parabolic of $\SO(V)$ stabilising the maximal isotropic subspace $X_3 \oplus X_1 \oplus \langle v_0 \rangle$. Given that $\mathfrak{u}_1 = V_0 \otimes X_1$ admits a symplectic structure afforded by $f = d\iota\begin{pmatrix}
    0&0\\
    1&0
\end{pmatrix} \in \Lie(\SO(V))$ and $V_0$ being quadratic, the space $X_1$ admits a symplectic structure. Now, given non-trivial character $\psi : F \to \C$, consider the character $\psi_f : U^+ \to \C^\tm$ arising from $f$. We have $U/\ker \psi_f$ being the Heisenberg group
\begin{align*}
    U/\ker \psi_f = \calH(V_0 \otimes X_1) = (V_0 \otimes X_1) \oplus (U^+/\ker \psi_f),
\end{align*}
where $\psi_f$ is a non-trivial central character of $\calH(V_0 \otimes X_1)$ and its stabiliser in $M = \GL(X_3) \tm \GL(X_1) \tm \SO(V_0)$ is $\Sp(X_1) \tm \SO(V_0)$. Hence, with these notations and the hyperspherical data (see Section \ref{sec: hyperspherical varieties}):
\begin{align*}
    &\iota: H \tm \SL_2 \to \SO(V), 
    &&(\rho_H, S) =0,
\end{align*}
its quantization is the smooth $\SO(V)$-module $\Sigma = \Ind_{H(F) U(F)}^{\SO(V)(F)} \omega_{\psi_f}$,
where $\omega_{\psi_f}$ is the Weil-representation of $\widetilde \Sp(V_0 \otimes X_1) \calH(V_0 \otimes X_1)$ pulled back to $H(F) U(F)$. With these, given smooth irreducible representation $\pi$ of $\SO(V)$ we have
\begin{align}
    \label{eqn: Isom GSO10 no.1}
    \operatorname{Hom}_{\SO(V)}(\pi, \Sigma) 
    \cong& \operatorname{Hom}_{H(F) U(F)}(\pi \otimes \omega_{\psi_f}^\vee, \C),
\end{align}
for $\omega_{\psi_f}^\vee$ is its contragredient representation, where the period integral $\calP_{\calD,D_5}$ is regarded as an (adelic version) element of the last Hom-space. On the other hand, to relate to the Bessel-type $\GSO_{10}$ integral in \cite{Gin95a} we first observe that the action of $H = \SO(V_0) \tm \Sp(X_1)$ on $V_0 \otimes X_1$ is polarisable, i.e. the it stabilises the Lagrangian decomposition of $V_0 \otimes X_1$: 
\begin{align*}
    V_0 \otimes X_1 = (\langle v_0 \rangle \otimes X_1) \oplus (\langle v_0' \rangle \otimes X_1).
\end{align*}
Furthermore, the image of $H = \Sp(V_0) \tm \Sp(X_1)$ in $\Sp(V_0 \otimes X_1)$ is contained in the Levi subgroup of the Siegel parabolic stabilising $\langle v_0 \rangle \otimes X_1$. Also by the standard theory of irreducible representation of Heisenberg group, we have
\begin{align*}
    \omega_{\psi_f}\vert_{H(F)U(F)} \cong \Ind_{H(F) U'(F)}^{H(F) U(F)} \psi_f,
\end{align*}
as representations of $H(F) U(F)$ for $U' \supset U^+$ is such that $U'/\ker \psi_f$ is a maximal abelian group of the Heisenberg group $U/\ker \psi_f$. Moreover, up to a certain choice of basis we have $U' = ZS$ for $Z$ and $S$ are unipotent subgroups defined in \cite[\textsection 2.2]{Gin95a}. From these, we see the isomorphism in \eqref{eqn: Isom GSO10 no.1} becomes
\begin{align*}
    \operatorname{Hom}_{H(F) U(F)}(\pi \otimes \omega_\psi^\vee, \C) \cong& \operatorname{Hom}_{\SO(V)}(\pi, \Sigma),\\
    \cong& \operatorname{Hom}_{H(F) U'(F)}(\pi \otimes \psi_f^{-1}, \C),
\end{align*}
where the Bessel-type $\GSO_{10}$ integral in \cite{Gin95a} is regarded as an (adelic version) element of the last Hom-space. Consequently, these series of isomorphism show that the both $\GSO_{10}$-period integral proposed by \cite{MWZ} and \cite{Gin95a} are equivalent.

\newpage\appendix
\section{Coisotropic symplectic representations}
\label{sec: Appendix A Coisotropic symplectic representations}
In this section, we will present all symplectic coisotropic representations $\rho$ that admits a polarisation, classified by \cite{Knop} and \cite{Losev} independently. We will denote $T(\tau) = \tau \oplus \tau^\vee$ for any representation $\tau$.

\begin{center}
\scalebox{0.735}{\begin{tabular}{|c|c|c|c|}\hline
Label & $(G^\vee, \tau)$ & $\mathcal{D}=(G, H,\rho_H, \iota)$ & Remark\\ \hline
\multicolumn{1}{|l|}{(2.1):$\begin{cases}
    m=n\geq2\\
    m>n\geq2
\end{cases}$}  & $(\GL_m \tm \GL_n, \std_m \otimes \std_n)$ & $\begin{cases}
    (\GL_n \tm \GL_n, \GL_n, T(\std_n), 1)\\
    (\GL_m \tm \GL_n, \GL_n, 0, ([m-n,1^n],1))
\end{cases}$ & \cite{JPSS}\textsuperscript{1}
\\
\hline
\multicolumn{1}{|l|}{(2.2):$\begin{cases}
    n=2m\geq4\\
    n=2m+1>4
\end{cases}$} & $(\GL_n, \wedge^2)$ &$\begin{cases}
    (\GL_{2m}, \GL_m, T(\std_m), [2^m])\\
    (\GL_{2m+1}, \GL_m, 0, [2^m,1])
\end{cases}$ & \cite{JS}\\
\hline
(2.4): $n\geq3$ & $(\GL_n, \std_n)$ & $(\GL_n, \GL_1,0,[n-1,1])$ & \cite{GGP}\\ 
\hline
(2.5): $m\geq2$ & $(\Sp_{2m}, \std_{2m})$ & $(\SO_{2n+1}, \SO_2, 0, [2m-1,1^2])$ & \cite{N95}\\
\hline
\multicolumn{1}{|l|}{(2.6): $\begin{cases}
    m\geq2,n=2\\
    m=2,n=4\\
    m=2,n\geq5\\
    m=2, n=3\\
    m\geq3,n=3
\end{cases}$} & $(\GSp_{2m} \tm \GL_n, \std_{2m} \otimes \std_{n})$ & $\begin{cases}
    (\GSpin_{2m+1} \tm \GL_2, \GSpin_4, T(\std_2) \oplus T(\std_2), [2m-3,1^4])\\
    \hline
    (\GSp_4 \tm \GL_4, G, \std_{\GSp_4} \otimes \wedge^2_{\GL_4} \oplus T(\std_{\GL_4}),1)\\
    (\GSp_4 \tm \GL_n, 
    S'(\GSp_4 \tm \GL_4), 
    \std_4 \otimes \wedge^2_{\GL_4},
    (1,[n-4,1^4]))\\
    \hline
    (\GSp_4 \tm \GL_3, G, T(\std_4 \otimes \std_3),1)\\
    (\GSpin_{2m+1} \tm \GL_3, \GSpin_6 \tm \GL_3, T(\HSpin_6 \otimes \std_3), ([2m-5,1^6],1))
\end{cases}$ & $\begin{matrix}
    \text{\cite{N,S}}\textsuperscript{2}\\
    -\\
\text{\cite{ACS}}
\end{matrix}$\\ 
\hline
\multicolumn{1}{|l|}{(2.7): $m=2n\geq4$} & $(\GSO_{2n}, \std_{2n})$ &$(\GSpin_{2n}, \GSpin_3, T(\std_2), [2n-3,1^3])$& $\text{\cite{E,GGP}}$ \\
\hline
\multicolumn{1}{|l|}{(2.8): $n \in \{7,9,10\}$} & $(\GSpin_n, \Spin_n)$ &
$\begin{cases}
    (\GSp_6, \GL_2, T(\std_2), [3^2])\\
    (\GSp_8, S(\SL_2 \tm \SL_2), T(\std_{2,2}), [3^2,1^2])\\
    (\GSO_{10}, \GL_2, 0, [4^2,1^2])
\end{cases}$  
& $\begin{matrix}
    \text{\cite{BG}}\\
    \text{\cite{BG}}\\
    \text{\cite{Gin95a}}\textsuperscript{3}
\end{matrix}$\\
\hline
(2.10) & $(E_6, \std_{E_6})$ & $(E_6, \GL_3, T(\std_{\GL_3}), D_4)$ & \cite{Gin95}\\
\hline
\end{tabular}}
\captionsetup{font=scriptsize}
\captionof{table}{Coisotropic polarised representation in \cite{Knop}'s Table 2 and hyperspherical data}
\label{Table: Table 2}
\end{center}

\begin{center}
\scalebox{0.67}{\begin{tabular}{|c|c|c|c|}\hline
Label & $(G^\vee, \tau_1 \oplus \tau_2)$ & $\mathcal{D}=(G, H,\rho_H, \iota)$ & Remark\\ \hline
(22.1) & $(\Spin_8, \std \oplus \Spin_8)$ &
$(\PGSO_8, S(\GL_2 \tm \GSO_4), T(\std_2) \oplus T(\std_2), [2^2,1^4])$
&-\\
\hline
(22.2) &$(\GL_n, \wedge^2 \oplus \std_n)$& $(\GL_n, \GL_{\lfloor \frac{n}{2} \rfloor} \tm  \GL_{\lceil \frac{n}{2} \rceil}, T(\std_{\lceil \frac{n}{2} \rceil}),1)$  & \cite{BF}\\
\hline
(22.3):$\begin{cases}
    m=n\geq 3\\
    m \geq n+1 \geq 4\\
    n-1=m \geq 2\\
    2 \leq m \leq n-2
\end{cases}$
&$(\GL_m \tm \GL_n, \std_m \otimes \std_n \oplus \std_n)$&
$\begin{cases}
    (\GL_n\tm\GL_n, G, T(\std_n \otimes \std_n) \oplus T(\std_n),1)\\
    \hline
    (\GL_m\tm \GL_n, \GL_n \tm \GL_n, T(\std_n \otimes \std_n), ([m-n, 1^n],1))\\
    \hline
    (\GL_{n-1}\tm \GL_n, G, T(\std_{n-1} \otimes \std_n) \oplus T(\std_n),1)\\
    \hline
    (\GL_m \tm \GL_n, \GL_m \tm \GL_{m+1}, T(\std_m \otimes \std_{m+1}), (1,[n-m-1,1^{m+1}]))
\end{cases}$
&\cite{Sak,GS,Ha}\textsuperscript{4}\\
\hline
(22.4):$\begin{cases}
    n=2\\
    n>2
\end{cases}$ &$(\GL_n, \std_n \oplus \std_n)$&   $\begin{cases}
    (\GL_2, \GL_2, T(\std_2) \oplus T(\std_2),1)\\
    (\GL_n, \GL_2,  T(\std_2), [n-2,1^2])
\end{cases}$&- \\
\hline
(22.5)&$(\GSp_{2n}, \std_{2n} \oplus \std_{2n})$&   $(\GSpin_{2n+1}, \GSpin_4, T(\std_2) \oplus T(\std_2), [2n-3,1^4])$&- \\
\hline
(S.10)+(S.11)  &$(\underline{\GL}_2 \tm \GL_n,\underline{\std}_2\oplus \underline{\std}_2 \otimes \std_n)$&$\mathcal D_{S10+S11}$&-\\
\hline
(S.10)+(S.14)  &$(\underline{\GL}_2 \tm \GSp_{2n},\underline{\std}_2\oplus \underline{\std}_2 \otimes \std_{2n})$&$\mathcal D_{S10+S14}$&-\\
\hline
(S.11)+(S.11)  &$\LL(\GL_m \tm \underline{\GL}_2 \tm \GL_n,
\begin{matrix}
    \std_m \otimes \underline{\std}_2\\
    \oplus\\
    \underline{\std}_2 \otimes \std_{n}
\end{matrix}
\RR)$&$\mathcal D_{S11+S11}$&-\\
\hline
(S.11)+(S.14) &$\LL(\GL_m \tm \underline{\GL}_2 \tm \GSp_{2n},
\begin{matrix}
    \std_m \otimes \underline{\std}_2\\
    \oplus\\
    \underline{\std}_2 \otimes \std_{2n}
\end{matrix}
\RR)$&$\mathcal D_{S11+S14}$&-\\
\hline
(S.14)+(S.14) &$\LL(\GSp_{2m} \tm \underline{\GL}_2 \tm \GSp_{2n}, \begin{matrix}
    \std_{2m}\otimes \underline{\std}_2\\
    \oplus\\
     \underline{\std}_2 \otimes \std_{2n}
\end{matrix}\RR)$&$\mathcal D_{S14+S14}$&-\\
\hline
\end{tabular}}
\captionsetup{font=tiny}
\captionof{table}{Coisotropic polarised representation in \cite{Knop}'s Table 22 \& S and hyperspherical data}
\label{Table: Table 22 + S}
\end{center}
\footnotetext[1]{As pointed out by \cite[Example 4.3.12]{BZSV}, although the integral of \cite{JPSS} is not derived from the hyperspherical presentation, but corresponding cotangent bundles are equivalent.}

\footnotetext[2]{
The integral representation for $\GSp_4 \tm \GL_2$ was first considered in \cite{N} and the local theory of this Rankin-Selberg integral is studied by \cite{S}.
}

\footnotetext[3]{
\cite{Gin95a} has constructed a Bessel-type Rankin-Selberg integral which is equivalent to that derived by hyperspherical presentation.
}

\footnotetext[4]{
\cite{GS} has constructed the integral representation for the case $(n,n)$ and \cite{Sak} has constructed the integral rerpesentation for the case $(m,n) \in \{(n,n), (n-1,n)\}$
}

The above two tables summarise all coisotropic polarised symplectic representations presented in \cite[Table 2, 22 and S]{Knop} as well as \cite[Tables 4-7,17-20 and 27]{MWZ} where their generic stabilisers are connected. Here, we follow the same labelling convention presented in \cite{Knop} and \cite{MWZ}.
Specifically, in Table \ref{Table: Table 2} it contains all polarised coisotropic symplectic representation of the form $\tau \oplus \tau^\vee$ whereas in Table \eqref{Table: Table 22 + S} it contains such representations of the form $(\tau_1 \oplus \tau_1^\vee) \oplus (\tau_2 \oplus \tau_2^\vee)$. In the third column of each table, we have included the strongly tempered BZSV quadruple proposed by \cite{MWZ} for each of these representations. In particular, the BZSV quadruple $\mathcal{D}_{A+B}$ for $A,B \in \{S10, S11, S14\}$ are obtained by the gluing of certain hyperspherical varieties with $A_1$-components (see \cite[\textsection 9]{MWZ} for more details). They are
\begin{align*}
    \mathcal{D}_{S10 +S11} =& \begin{dcases*}
        (\underline{\GL}_2 \tm \GL_2, \GL_2 \tm \GL_2, T(\std_2) \oplus T(\std_2 \otimes \std_2), 1) & if $n=2$\\
        (\underline{\GL}_2 \tm \GL_n, \GL_2 \tm \GL_2, T(\std_2 \otimes \std_2), (1,[n-2,1^2])) &if $n\geq3$,
    \end{dcases*}\\
    \mathcal{D}_{S10 + S14}=& 
        (\underline{\GL}_2 \tm \GSpin_{2n+1},
        G(\SL_2\tm\SL_2) \tm \GL_2,
        T(\std_2 \otimes \std_2) \oplus
        T(\std_2), 
        (1,[2n-3,1^4])),\\
    \mathcal{D}_{S11 + S11}=&
    \begin{dcases*}
        (\GL_2 \tm \underline{\GL}_2 \tm \GL_2,
        S(\GL_2^3), 
        T(\std_2) \oplus T(\std_2) \oplus \std_2^{\otimes3},
        1), &if $m=n=2$,\\
        (\GL_2 \tm \underline{\GL}_2 \tm \GL_n,
        S(\GL_2^3),
        T(\std_2) \oplus \std_2^{\otimes 3},
        (1,1,[n-2,1^2]) &if $m=2,n\geq3$,\\
        (\GL_m \tm \underline{\GL}_2 \tm \GL_n,
        S(\GL_2^3),
        \std_2^{\otimes 3},
        ([m-2,1^2],1,[n-2,1^2])) & if $m,n\geq3$,
    \end{dcases*}\\
    \mathcal{D}_{S11 + S14}=& \begin{dcases*}
        (\GL_2 \tm \underline{\GL}_2 \tm \GSpin_{2n+1},
        S''(\GL_2^4),
        T(\std_2) \oplus T(\std_2) \oplus
        \std_2^{\otimes3},
        (1,1,[2n-3,1^4])) &if $m=2$\\
        (\GL_m \tm \underline{\GL}_2 \tm \GSpin_{2n+1},
        S''(\GL_2^4),
        T(\std_2) \oplus \std_2^{\otimes3},
        ([m-2,1^2],1,[2n-3,1^4]))
        &if $m\geq3$,
    \end{dcases*}
\end{align*}
and $\mathcal{D}_{S14 + S14}$ is given by
\begin{align*}
    (\GSpin_{2m+1} \tm \underline{\GL}_2 \tm \GSpin_{2n+1},
    S^\ast(\GL_2^5),
    T(\std_2) \oplus T(\std_2) \oplus
    \std_2^{\otimes 3},
    ([2m-3,1^4],1,[2n-3,1^4])).
\end{align*}
In the last column of each table, we included the reference for the existing work on such integral representations that are equivalent to what \cite{BZSV} has proposed.

To end off, we remark that there are three families of coisotropic polarised symplectic representations in \cite{Knop} where the generic stabiliser is not connected (hence they do not belong in the current framework of \cite{BZSV}). Namely, they are
\begin{center}
\begin{tabular}{|c|c|c|c|}\hline
Label & $(G^\vee, \tau)$ & Remark\\ \hline
(2.3)&$(\GL_n, \operatorname{Sym}^2)$&\cite{BG1,P-PS,T}\\ \hline
(2.7)&$(\SO_{2k+1}, \std_{2k+1})$ &\cite{GRS,Y1}\\ \hline
(2.9)&$(G_2, \std_{G_2})$&\cite{G93}\\ \hline
\end{tabular}
\end{center}
Nevertheless, as pointed out by \cite{MWZ}, the dual integral has already studied.

\end{document}